\numberwithin{equation}{section}
\subjclass[2010]{11G05 (primary), 11D45, 14G05 (secondary).}
\keywords{integral points, elliptic curves, Siegel's theorem, Mumford gap principle, spherical codes}
\title{The average number of integral points on elliptic curves is bounded}
\author{Levent Alpoge}\email{levent.alpoge@gmail.com}
\address{Churchill College, University of Cambridge, Cambridge CB3 0DS.}
\begin{document}

\begin{abstract}
We prove that, when elliptic curves $E/\mathbb{Q}$ are ordered by height, the average number of integral points $\#|E(\mathbb{Z})|$ is bounded, and in fact is less than $66$ (and at most $\frac{8}{9}$ on the minimalist conjecture). By ``$E(\mathbb{Z})$'' we mean the integral points on the corresponding quasiminimal Weierstrass model $E_{A,B}: y^2 = x^3 + Ax + B$ with which one computes the na\"{\i}ve height. The methods combine ideas from work of Silverman, Helfgott, and Helfgott-Venkatesh with work of Bhargava-Shankar and a careful analysis of local heights for ``most'' elliptic curves. The same methods work to bound integral points on average over the families $y^2 = x^3 + B$, $y^2 = x^3 + Ax$, and $y^2 = x^3 - D^2 x$.
\end{abstract}

\maketitle

\newtheoremstyle{dotless}{}{}{\itshape}{}{\bfseries}{}{ }{}

\newtheorem{thm}{Theorem}
\newtheorem{lem}[thm]{Lemma}
\newtheorem{remark}[thm]{Remark}
\newtheorem{cor}[thm]{Corollary}
\newtheorem{defn}[thm]{Definition}
\newtheorem{prop}[thm]{Proposition}
\newtheorem{conj}[thm]{Conjecture}
\newtheorem{claim}[thm]{Claim}
\newtheorem{exer}[thm]{Exercise}
\newtheorem{fact}[thm]{Fact}

\theoremstyle{dotless}

\newtheorem{thmnodot}[thm]{Theorem}
\newtheorem{lemnodot}[thm]{Lemma}
\newtheorem{cornodot}[thm]{Corollary}

\newcommand{\image}{\mathop{\text{image}}}
\newcommand{\End}{\mathop{\text{End}}}
\newcommand{\Hom}{\mathop{\text{Hom}}}
\newcommand{\Sum}{\displaystyle\sum\limits}
\newcommand{\Prod}{\displaystyle\prod\limits}
\newcommand{\Tr}{\mathop{\mathrm{Tr}}}
\renewcommand{\Re}{\operatorname{\mathfrak{Re}}}
\renewcommand{\Im}{\operatorname{\mathfrak{Im}}}
\newcommand{\im}{\mathrm{im}\,}
\newcommand{\inner}[1]{\langle #1 \rangle}
\newcommand{\pair}[2]{\langle #1, #2\rangle}
\newcommand{\ppair}[2]{\langle\langle #1, #2\rangle\rangle}
\newcommand{\Pair}[2]{\left[#1, #2\right]}
\newcommand{\Char}{\mathop{\mathrm{char}}}
\newcommand{\rank}{\mathrm{rank}}
\newcommand{\sgn}[1]{\mathop{\mathrm{sgn}}(#1)}
\newcommand{\leg}[2]{\left(\frac{#1}{#2}\right)}
\newcommand{\Sym}{\mathrm{Sym}}
\newcommand{\hmat}[2]{\left(\begin{array}{cc} #1 & #2\\ -\bar{#2} & \bar{#1}\end{array}\right)}
\newcommand{\HMat}[2]{\left(\begin{array}{cc} #1 & #2\\ -\overline{#2} & \overline{#1}\end{array}\right)}
\newcommand{\Sin}[1]{\sin{\left(#1\right)}}
\newcommand{\Cos}[1]{\cos{\left(#1\right)}}
\newcommand{\comm}[2]{\left[#1, #2\right]}
\newcommand{\Isom}{\mathop{\mathrm{Isom}}}
\newcommand{\Map}{\mathop{\mathrm{Map}}}
\newcommand{\Bij}{\mathop{\mathrm{Bij}}}
\newcommand{\Z}{\mathbb{Z}}
\newcommand{\R}{\mathbb{R}}
\newcommand{\Q}{\mathbb{Q}}
\newcommand{\C}{\mathbb{C}}
\newcommand{\Nm}{\mathrm{Nm}}
\newcommand{\RI}[1]{\mathcal{O}_{#1}}
\newcommand{\F}{\mathbb{F}}
\renewcommand{\Pr}{\displaystyle\mathop{\mathrm{Pr}}\limits}
\newcommand{\E}{\mathbb{E}}
\newcommand{\coker}{\mathop{\mathrm{coker}}}
\newcommand{\id}{\mathop{\mathrm{id}}}
\newcommand{\Oplus}{\displaystyle\bigoplus\limits}
\renewcommand{\Cap}{\displaystyle\bigcap\limits}
\renewcommand{\Cup}{\displaystyle\bigcup\limits}
\newcommand{\Bil}{\mathop{\mathrm{Bil}}}
\newcommand{\N}{\mathbb{N}}
\newcommand{\Aut}{\mathop{\mathrm{Aut}}}
\newcommand{\ord}{\mathop{\mathrm{ord}}}
\newcommand{\ch}{\mathop{\mathrm{char}}}
\newcommand{\minpoly}{\mathop{\mathrm{minpoly}}}
\newcommand{\Spec}{\mathop{\mathrm{Spec}}}
\newcommand{\Gal}{\mathop{\mathrm{Gal}}}
\newcommand{\Ad}{\mathop{\mathrm{Ad}}}
\newcommand{\Stab}{\mathop{\mathrm{Stab}}}
\newcommand{\Norm}{\mathop{\mathrm{Norm}}}
\newcommand{\Orb}{\mathop{\mathrm{Orb}}}
\newcommand{\pfrak}{\mathfrak{p}}
\newcommand{\qfrak}{\mathfrak{q}}
\newcommand{\mfrak}{\mathfrak{m}}
\newcommand{\Frac}{\mathop{\mathrm{Frac}}}
\newcommand{\Loc}{\mathop{\mathrm{Loc}}}
\newcommand{\Sat}{\mathop{\mathrm{Sat}}}
\newcommand{\inj}{\hookrightarrow}
\newcommand{\surj}{\twoheadrightarrow}
\newcommand{\bij}{\leftrightarrow}
\newcommand{\Ind}{\mathrm{Ind}}
\newcommand{\Supp}{\mathop{\mathrm{Supp}}}
\newcommand{\Ass}{\mathop{\mathrm{Ass}}}
\newcommand{\Ann}{\mathop{\mathrm{Ann}}}
\newcommand{\Krulldim}{\dim_{\mathrm{Kr}}}
\newcommand{\Avg}{\mathop{\mathrm{Avg}}}
\newcommand{\innerhom}{\underline{\Hom}}
\newcommand{\triv}{\mathop{\mathrm{triv}}}
\newcommand{\Res}{\mathrm{Res}}
\newcommand{\eval}{\mathop{\mathrm{eval}}}
\newcommand{\MC}{\mathop{\mathrm{MC}}}
\newcommand{\Fun}{\mathop{\mathrm{Fun}}}
\newcommand{\InvFun}{\mathop{\mathrm{InvFun}}}
\renewcommand{\ch}{\mathop{\mathrm{ch}}}
\newcommand{\irrep}{\mathop{\mathrm{Irr}}}
\newcommand{\len}{\mathop{\mathrm{len}}}
\newcommand{\SL}{\mathrm{SL}}
\newcommand{\GL}{\mathrm{GL}}
\newcommand{\PSL}{\mathrm{SL}}
\newcommand{\actson}{\curvearrowright}
\renewcommand{\H}{\mathbb{H}}
\newcommand{\mat}[4]{\left(\begin{array}{cc} #1 & #2\\ #3 & #4\end{array}\right)}
\newcommand{\interior}{\mathop{\mathrm{int}}}
\newcommand{\floor}[1]{\left\lfloor #1\right\rfloor}
\newcommand{\iso}{\cong}
\newcommand{\eps}{\epsilon}
\newcommand{\disc}{\mathrm{disc}}
\newcommand{\Frob}{\mathrm{Frob}}
\newcommand{\charpoly}{\mathrm{charpoly}}
\newcommand{\afrak}{\mathfrak{a}}
\newcommand{\cfrak}{\mathfrak{c}}
\newcommand{\codim}{\mathrm{codim}}
\newcommand{\ffrak}{\mathfrak{f}}
\newcommand{\Pfrak}{\mathfrak{P}}
\newcommand{\homcont}{\hom_{\mathrm{cont}}}
\newcommand{\vol}{\mathrm{vol}}
\newcommand{\ofrak}{\mathfrak{o}}
\newcommand{\A}{\mathbb{A}}
\newcommand{\I}{\mathbb{I}}
\newcommand{\invlim}{\varprojlim}
\newcommand{\dirlim}{\varinjlim}
\renewcommand{\ch}{\mathrm{char}}
\newcommand{\artin}[2]{\left(\frac{#1}{#2}\right)}
\newcommand{\Qfrak}{\mathfrak{Q}}
\newcommand{\ur}[1]{#1^{\mathrm{ur}}}
\newcommand{\absnm}{\mathcal{N}}
\newcommand{\ab}[1]{#1^{\mathrm{ab}}}
\newcommand{\G}{\mathbb{G}}
\newcommand{\dfrak}{\mathfrak{d}}
\newcommand{\Bfrak}{\mathfrak{B}}
\renewcommand{\sgn}{\mathrm{sgn}}
\newcommand{\disjcup}{\bigsqcup}
\newcommand{\zfrak}{\mathfrak{z}}
\renewcommand{\Tr}{\mathrm{Tr}}
\newcommand{\reg}{\mathrm{reg}}
\newcommand{\subgrp}{\leq}
\newcommand{\normal}{\vartriangleleft}
\newcommand{\Dfrak}{\mathfrak{D}}
\newcommand{\nvert}{\nmid}
\newcommand{\K}{\mathbb{K}}
\newcommand{\pt}{\mathrm{pt}}
\newcommand{\RP}{\mathbb{RP}}
\newcommand{\CP}{\mathbb{CP}}
\newcommand{\rk}{\mathrm{rk}}
\newcommand{\redH}{\tilde{H}}
\renewcommand{\H}{\tilde{H}}
\newcommand{\Cyl}{\mathrm{Cyl}}
\newcommand{\T}{\mathbb{T}}
\newcommand{\Ab}{\mathrm{Ab}}
\newcommand{\Vect}{\mathrm{Vect}}
\newcommand{\Top}{\mathrm{Top}}
\newcommand{\Nat}{\mathrm{Nat}}
\newcommand{\inc}{\mathrm{inc}}
\newcommand{\Tor}{\mathrm{Tor}}
\newcommand{\Ext}{\mathrm{Ext}}
\newcommand{\fungrpd}{\pi_{\leq 1}}
\newcommand{\slot}{\mbox{---}}
\newcommand{\funct}{\mathcal}
\newcommand{\Funct}{\mathcal{F}}
\newcommand{\Gunct}{\mathcal{G}}
\newcommand{\FunCat}{\mathrm{Funct}}
\newcommand{\Rep}{\mathrm{Rep}}
\newcommand{\Specm}{\mathrm{Specm}}
\newcommand{\ev}{\mathrm{ev}}
\newcommand{\frpt}[1]{\{#1\}}
\newcommand{\h}{\mathscr{H}}
\newcommand{\poly}{\mathrm{poly}}
\newcommand{\Partial}[1]{\frac{\partial}{\partial #1}}
\newcommand{\Cont}{\mathrm{Cont}}
\renewcommand{\o}{\ofrak}
\newcommand{\bfrak}{\mathfrak{b}}
\newcommand{\Cl}{\mathrm{Cl}}
\newcommand{\ceil}[1]{\lceil #1\rceil}
\newcommand{\hfrak}{\mathfrak{h}}
\newcommand{\Sel}{\mathrm{Sel}}
\newcommand{\Qbar}{\overline{\mathbb{Q}}}
\renewcommand{\I}{\mathrm{I}}
\newcommand{\II}{\mathrm{II}}
\newcommand{\III}{\mathrm{III}}
\newcommand{\IV}{\mathrm{IV}}
\newcommand{\V}{\mathrm{V}}
\newcommand{\FuniversalT}{\mathcal{F}_{\mathrm{universal}}^{\leq T}}
\newcommand{\FAT}{\mathcal{F}_{A=0}^{\leq T}}
\newcommand{\FBT}{\mathcal{F}_{B=0}^{\leq T}}
\newcommand{\FcongT}{\mathcal{F}_{\mathrm{congruent}}^{\leq T}}
\newcommand{\rad}{\mathrm{rad}}
\newcommand{\const}{\mathrm{const}}
\renewcommand{\sp}{\mathrm{span}}
\renewcommand{\d}{\partial}
\newcommand{\num}{\mathrm{num}}
\newcommand{\den}{\mathrm{den}}
\newcommand{\ind}{\mathrm{ind}}

\let\uglyphi\phi
\let\phi\varphi

\tableofcontents

\section{Introduction}
The question of counting the number of integral solutions to an equation of shape $y^2 = x^3 + Ax + B$ goes back at least to Fermat, who, on considering this question for specific $A$ and $B$ (e.g.\ one of his challenge problems to the English was to find all integral solutions to $y^2 = x^3 - 2$), developed his method of descent. Fermat also applied this method to show certain such equations had \emph{no} nontrivial rational solutions (famously, $y^2 = x^3 - x$, showing that $1$ is not the area of a right triangle with rational sides), leading to the question of counting the number of rational solutions to such equations as well.

This last question has seen great progress. Certainly the number of solutions is either infinite or finite, and density considerations (\cite{harronsnowden}) imply that $0\%$ of curves with finitely many rational points have any at all. Recent work of Bhargava-Shankar \cite{bhargavashankarfive} and Bhargava-Skinner-Zhang \cite{bhargavaskinnerzhang} implies that, in fact, both possibilities --- infinitely many and none at all --- occur with positive probability. This agrees with the expectation derived from the Birch and Swinnerton-Dyer conjecture of each possibility occurring with probability one half (the ``minimalist conjecture'' of Goldfeld and Katz-Sarnak).

Progress has also been made for equations of shape $y^2 = f(x)$ with $f\in \Z[x]$ of fixed odd degree $2g+1 > 3$. Here, by Faltings's theorem, one cannot have infinitely many solutions, and indeed one expects none with probability $1$. In fact Poonen-Stoll \cite{poonenstoll}, building on work of Bhargava-Gross \cite{bhargavagross}, were able to prove that such a curve has no rational solutions with probability at least $1 - (12g + 20)2^{-g}$, which is quite close to $1$ for $g$ very large.

But the analogous question for integral points on elliptic curves does not yield to these methods. By a theorem of Siegel there are only finitely many solutions to $y^2 = x^3 + Ax + B$ if $A$ and $B$ are such that the discriminant of the cubic, $-4A^3 - 27B^2$, is nonzero, so that the equation defines an elliptic curve. Therefore we are in a situation like that of Poonen-Stoll/Bhargava-Gross, and similarly we expect to have no integral solutions with probability $1$.\footnote{Indeed, this expectation dates back at least to 1986: see page 269 of the first edition of Silverman's \emph{Arithmetic of Elliptic Curves} \cite{silvermanarithmeticofellipticcurves}.} But despite the expected paucity of curves with integral points, until now it was not known whether the average number of integral points on elliptic curves is bounded. In this paper we show that it is indeed bounded --- in fact, by $66$.

Let us now be more precise. An elliptic curve $E/\Q$ has a unique Weierstrass model of the form $E_{A,B}: y^2 = x^3 + Ax + B$, where $A$ and $B$ are such that $p^4\vert A\implies p^6\nmid B$ and $-4A^3 -27B^2\neq 0$. Given a Weierstrass model, we define the set of integral points on the curve as $$E_{A,B}(\Z) := \{(x,y)\in \Z^2 \vert y^2 = x^3 + Ax + B\},$$ and write $\#|E_{A,B}(\Z)|$ for its cardinality. To produce probabilistic statements, we need a notion of density. We write $H(E_{A,B}) := \max(4|A|^3, 27B^2)^{\frac{1}{6}}$ for the na\"{\i}ve height of $E_{A,B}$. Note that our normalization is slightly different from that of Bhargava-Shankar.

Given a family $\mathcal{F}$ of elliptic curves and a function $f$ on this family, we define $$\Avg_{E\in \mathcal{F}^{\leq T}}(f(E)) := \frac{\Sum_{E\in \mathcal{F}, H(E)\leq T} f(E)}{\Sum_{E\in \mathcal{F}, H(E)\leq T} 1}.$$ Thus for instance Bhargava-Shankar \cite{bhargavashankarfive} have shown that $$\limsup_{T\to\infty} \Avg_{E\in \mathcal{F}^{\leq T}}(5^{\rank(E)})\leq 6$$ for $\mathcal{F}$ the family of all elliptic curves.

Let now $\mathcal{F}_{\mathrm{universal}}$ be the family of all elliptic curves, $\mathcal{F}_{A=0}$ be the family of Mordell curves $y^2 = x^3 + B$ ($B$ sixth-power free), $\mathcal{F}_{B=0}$ be the family of curves $y^2 = x^3 + Ax$ ($A$ fourth-power free), and $\mathcal{F}_{\mathrm{congruent}}$ be the family of congruent number curves $y^2 = x^3 - D^2 x$ ($D$ squarefree). With this notation in hand, we may state our main result.

\begin{thm}\label{amazing theorem}
Let $k\geq 0$. Let $\mathcal{F} = \mathcal{F}_{\mathrm{universal}}, \mathcal{F}_{A=0}, \mathcal{F}_{B=0}$, or $\mathcal{F}_{\mathrm{congruent}}$. \\Then: $$\limsup_{T\to\infty} \Avg_{E\in \mathcal{F}^{\leq T}}(\#|E(\Z)|^k) \leq O(1)^k\cdot \limsup_{T\to\infty}\Avg_{E\in \mathcal{F}^{\leq T}}(3^{k\cdot \rank(E)}),$$ where the implied constant is effective and absolute.
\end{thm}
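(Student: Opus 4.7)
The plan is to reduce everything to a pointwise bound $\#|E(\Z)| \leq C_1\cdot 3^{\rank(E)}$ valid for a density-$1$ subfamily $\mathcal{F}'\subset\mathcal{F}$, and to handle the exceptional curves via Silverman's rank-and-bad-prime bound combined with the sparsity of the exceptional set. The $k$-th moment inequality then follows by raising to the $k$-th power and averaging, with $O(1)^k$ absorbing the universal constant $C_1^k$ plus the negligible exceptional contribution.

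First I would reduce to nontorsion integral points (Mazur bounds torsion by $16$) and work in the Mordell-Weil lattice $L := (E(\Q)/\text{tors})\otimes\R$ equipped with the N\'eron-Tate form. Each integral point $P=(x,y)$ obeys $|x|\ll H(E)^2$, which via the standard comparison of canonical and naive heights gives $\hat h(P)\ll \log H(E)$. The key lemma is an angular separation statement in $L$: for two distinct nontorsion integral points $P,Q$ with $\hat h(P),\hat h(Q)$ both lying in a dyadic window $[T_0,2T_0]$, Mumford's gap principle --- sharpened by Helfgott and Helfgott-Venkatesh --- forces the angle between $P$ and $Q$ in $L$ to exceed $\pi/3-O(1/\sqrt{T_0})$. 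Summing over the $O(\log\log H(E))$ dyadic windows covering $[c,C_2\log H(E)]$ and invoking a spherical-code bound at angle $\pi/3$ (at most $3^{\rank(E)}$ vectors in dimension $\rank(E)$) then yields $\#|E(\Z)|\leq C_1\cdot 3^{\rank(E)}$, provided the lowest window has left endpoint $T_0\gg 1$.

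The main obstacle is securing a uniform lower bound $\hat h(P)\geq c$ for all nontorsion integral points. It degrades precisely when $E_{A,B}$ is far from locally minimal, i.e.\ when the Szpiro ratio $\tfrac{\log H(E)}{\log|\mathrm{cond}(E)|}$ is large. The ``careful analysis of local heights'' promised in the abstract addresses this: one decomposes $\hat h(P) = \sum_v\hat\lambda_v(P)$ into N\'eron local heights, bounds $\hat\lambda_p(P)$ at each prime in terms of local Tamagawa-type data and $v_p(\Delta_E)$, and shows that the exceptional family
\[ \mathcal{F}_{\text{bad}}^{\leq T} := \{E\in\mathcal{F}^{\leq T} : \Delta_E \text{ has anomalously large square-part}\} \]
has density tending to $0$, since in the Bhargava-Shankar parametrization the condition $\ell^2\mid\Delta_E$ for large $\ell$ cuts out a thin locus. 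On the complement, the local-height decomposition pushes the canonical height of every nontorsion integral point above an absolute constant.

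Finally, for the $k$-th moment: on $\mathcal{F}^{\leq T}\setminus\mathcal{F}_{\text{bad}}^{\leq T}$ one has $\#|E(\Z)|^k \leq (16+C_1\cdot 3^{\rank(E)})^k \ll O(1)^k\cdot 3^{k\,\rank(E)}$, while on $\mathcal{F}_{\text{bad}}^{\leq T}$ one combines Silverman's bound $\#|E(\Z)|\leq C^{1+\omega(\Delta_E)+\rank(E)}$ with H\"older and the sparsity estimate to show that the contribution is $o(1)$ times $\Avg(3^{k\,\rank(E)})$. Averaging yields the claim, with the implied constant absolute because every ingredient --- Mazur, Mumford-Helfgott-Venkatesh, the spherical-code bound, the local-height control, and the Bhargava-Shankar sparsity --- is.
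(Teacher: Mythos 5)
Your proposal has a fatal gap at its very first quantitative step: the claim that every integral point $P=(x,y)$ satisfies $|x|\ll H(E)^2$, hence $\hat h(P)\ll\log H(E)$. No such bound is known --- for the Mordell family it is essentially a strong form of Hall's conjecture, and in general it is of abc strength. Integral points can a priori have height as large as Baker's bound, which is doubly exponential in $H(E)$, so your covering of $[c,\,C_2\log H(E)]$ by $O(\log\log H(E))$ dyadic windows simply does not see most of the possible integral points, and the gap-principle-plus-spherical-code count collapses. This is precisely why the paper splits $E(\Z)$ into small, medium, and \emph{large} ranges: the medium range (height between $\asymp\log T$ and $\delta^{-1}\log T$, a bounded multiplicative ratio) is handled by the Helfgott--Mumford gap principle and a code bound as you suggest (note also that a dyadic ratio $2$ only forces $\cos\theta\leq\tfrac{\sqrt 2}{2}+o(1)$, not an angle near $\pi/3$; one needs windows of ratio $1+\delta$), but the large range requires a genuinely different idea: writing $P=3Q+R_{\vec a}$ in each coset of $3E(\Q)$, showing $x(Q)$ is a Roth/Siegel-quality and ``large'' rational approximation to a root of $x(3\cdot)=x(-R_{\vec a})$, and bounding the number of such approximations by an explicit bivariate Roth lemma. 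That is where the factor $3^{\rank(E)}$ in the theorem actually comes from (the cosets mod $3$), and nothing in your proposal substitutes for it.

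Two further points would also need repair even if the height bound were available. First, your exceptional-set treatment via Silverman's bound $C^{1+\omega(\Delta)+\rank}$ cannot be closed by H\"older, because the constant there is of order $10^{10}$ and no average of $C^{\rank}$ beyond $5^{\rank}$ is known; the paper instead uses the Helfgott--Venkatesh bound with its $1.34^{\rank}$ factor, which Bhargava--Shankar can absorb. Second, your definition of $\mathcal{F}_{\mathrm{bad}}$ by ``anomalously large square part of $\Delta$'' only makes sense for $\mathcal{F}_{\mathrm{universal}}$: for $\mathcal{F}_{A=0}$, $\mathcal{F}_{B=0}$, and $\mathcal{F}_{\mathrm{congruent}}$ the discriminant is never close to squarefree (e.g.\ it is a constant times $D^6$ for the congruent number curves), so that locus has density $1$, not $0$; the paper instead exploits Lang's explicit local-height formulas for $j=0$ and $j=1728$ together with the trick of comparing angles between $2P$ and $2R$, plus family-specific counts of small points and of curves with small rational points. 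Your small-point discussion (seeking a uniform lower bound $\hat h(P)\geq c$) is also not how the difficulty is resolved; the paper removes curves with small integral or rational points by direct lattice-point counts over the family, which is an average statement, not a pointwise one.
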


Work of Bhargava-Shankar \cite{bhargavashankarfive} implies that, for $\mathcal{F} = \mathcal{F}_{\mathrm{universal}}$, $$\limsup_{T\to\infty}\Avg_{E\in \mathcal{F}_{\mathrm{universal}}^{\leq T}}(5^{\rank(E)})\leq 6,$$ whence the right-hand side of the theorem is $\ll 1$ when $k=1$ and indeed when $k\leq \frac{\log{5}}{\log{3}} = 1.4649...$. (Hence e.g.\ the proportion of curves with at least $n$ integral points is $o(n^{-1.4649...})$.) For this family we optimize our bound to get:
\begin{thm}\label{constant theorem}
When all elliptic curves $E/\Q$ are ordered by height, the average number of integral points $\#|E(\Z)|$ is less than $65.8457$.
Moreover, if the minimalist conjecture\footnote{Here by the ``minimalist conjecture'' we mean not only that the ranks of elliptic curves in $\mathcal{F}_{\mathrm{universal}}^{\leq T}$ are distributed $50/50$ between $0$ and $1$ in the limit $T\to\infty$, but also the same statement for the subfamily of $(A,B)\not\equiv (2,2)\pmod{3}$. Otherwise $\frac{8}{9}$ should be replaced by another constant smaller than $1$.} is true, $65.8457$ may be replaced by $\frac{8}{9}$.
\end{thm}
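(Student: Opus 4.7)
The plan is to deduce Theorem \ref{constant theorem} from Theorem \ref{amazing theorem} with $k=1$ and $\mathcal{F} = \mathcal{F}_{\mathrm{universal}}$ by making the constant $O(1)$ explicit and bounding the rank moment $\limsup \Avg(3^{\rank(E)})$ numerically. The main theorem yields
$$\limsup_{T\to\infty} \Avg_{E\in \mathcal{F}_{\mathrm{universal}}^{\leq T}}(\#|E(\Z)|) \leq C \cdot \limsup_{T\to\infty} \Avg_{E\in \mathcal{F}_{\mathrm{universal}}^{\leq T}}(3^{\rank(E)})$$
for some explicit absolute $C$, so two tasks remain: evaluate the moment on the right and extract $C$ from the proof of Theorem \ref{amazing theorem}.

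For the rank moment, I would invoke the Bhargava-Shankar bound $\limsup \Avg(5^{\rank(E)}) \leq 6$ and apply H\"older's inequality in the form $\Avg(3^{\rank}) = \Avg((5^{\rank})^{\log 3/\log 5}) \leq \Avg(5^{\rank})^{\log 3/\log 5}$, giving $\limsup \Avg(3^{\rank}) \leq 6^{\log 3/\log 5}$. The unconditional bound $65.8457$ then reduces to verifying $C\cdot 6^{\log 3/\log 5} \leq 65.8457$.

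The main obstacle is the explicit evaluation of $C$. This requires walking through the proof of Theorem \ref{amazing theorem} and tabulating the multiplicative losses at each stage: the local-height decomposition at archimedean and non-archimedean places, together with the measure of the ``exceptional'' locus of curves that must be discarded; Silverman's bound of shape $\#|E(\Z)| \ll C_0^{\rank(E) + \omega(\Delta)}$ for points of small canonical height; the Helfgott-Venkatesh spherical-codes / Mumford gap estimate controlling large-height points; and the Bhargava-Shankar orbit-counting input used for the rank average. The thresholds separating small from large points, and typical from exceptional curves, must be optimized jointly to squeeze the product of these losses into $65.8457 / 6^{\log 3/\log 5}$; this balancing is where essentially all of the quantitative work sits.

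For the conditional bound, the minimalist conjecture gives $\limsup \Avg(3^{\rank}) = \frac{1}{2}(1) + \frac{1}{2}(3) = 2$ exactly, which already replaces $6^{\log 3/\log 5}$ in the above argument. The substantial further gain beyond $2C$ comes from the stronger minimalist statement in the footnote applied to the density-$\frac{8}{9}$ subfamily $(A,B)\not\equiv (2,2)\pmod 3$, on which the local-height analysis of Theorem \ref{amazing theorem} is essentially sharp: rank-$0$ curves there almost never contribute integral torsion, and rank-$1$ curves contribute a negligible density of ``small'' generators giving integral points, so the average on the good subfamily vanishes in the limit. The complementary density-$\frac{1}{9}$ subfamily is handled by the general unconditional bound and contributes at most an absolute constant, and together the two pieces sum to at most $\frac{8}{9}$.
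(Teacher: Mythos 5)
Your high-level plan (bound the rank moment via H\"older from Bhargava--Shankar and multiply by an explicit constant extracted from the proof of Theorem \ref{amazing theorem}) is not how the paper obtains $65.8457$, and more importantly it is not carried out: the entire content of Theorem \ref{constant theorem} is the explicit number, and your proposal defers exactly that computation (``tabulating the multiplicative losses'') without producing it. The paper does not pass through a single constant $C$ times $\Avg(3^{\rank})$; it proves a rank-stratified bound on the subfamily $\mathcal{F}_*$ --- $\#|E(\Z)|=0$ for rank $0$, $\#|E(\Z)|\leq 2$ for rank $1$ (this needs the division-polynomial coefficient bound plus lower bounds for linear forms in elliptic logarithms, an ingredient absent from your outline), and for rank $r\geq 2$ a bound of shape $2r\lceil\log\tilde D/\log J\rceil\cdot(\text{code bound in }\RP^{r-1})+9s(3^r-1)$ --- and then optimizes the parameters $c,D,s,J$ rank by rank, using linear-programming bounds on projective codes for $3\leq r\leq 13$, a volume bound for large $r$, the $5$-Selmer average to control the tail, and the extra savings from the congruence observation and the Bhargava--Skinner--Zhang rank proportions. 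Your sketch also misattributes the treatment of small points (the paper discards curves possessing small points by direct lattice-point counting, rather than invoking Silverman's $O(1)^{\rank+\omega(\Delta)}$ bound), so even as a roadmap it would not reconstruct the stated constant.

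The conditional part of your argument is structurally wrong. The factor $\frac{8}{9}$ does not arise from the density-$\frac{8}{9}$ subfamily having vanishing average and the density-$\frac{1}{9}$ subfamily contributing ``an absolute constant'' (note that accounting would give something like $\frac{C}{9}$, not $\frac{8}{9}$). It is the opposite: if $(A,B)\equiv(2,2)\pmod 3$ then $x^3+Ax+B\equiv 2\pmod 3$ for every $x$, so these curves (density $\frac{1}{9}$) have \emph{no} integral points at all, while on the complementary density-$\frac{8}{9}$ subfamily the minimalist conjecture gives half rank $0$ (zero integral points, after discarding torsion) and half rank $1$, where the paper can only show $\#|E(\Z)|\leq 2$ --- it explicitly cannot rule out that almost every rank-one curve there has an integral generator, which is precisely why the bound is $\frac{8}{9}=\frac{8}{9}\bigl(\frac{1}{2}\cdot 0+\frac{1}{2}\cdot 2\bigr)$ and not smaller. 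Your claim that rank-one curves in the good subfamily ``contribute a negligible density of small generators giving integral points, so the average on the good subfamily vanishes'' is exactly the statement the methods cannot prove, so this step of your proposal fails.
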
\noindent
That is, $$\limsup_{T\to\infty} \Avg_{E\in \mathcal{F}_{\mathrm{universal}}^{\leq T}}(\#|E(\Z)|) < 65.8457,$$ and this upper bound may be replaced by $\leq \frac{8}{9}$ if the minimalist conjecture holds.\footnote{This $\frac{8}{9}$ results from being unable to rule out the possibility of almost every rank one curve having an integral generator in the subfamily $(A,B)\not\equiv (2,2)\pmod{3}$.}

In Section \ref{congruent number curves subsection} we describe how to extend work of Heath-Brown in \cite{heathbrowntwiststwo} to prove that, for $\mathcal{F} = \mathcal{F}_{\mathrm{congruent}}$, $$\limsup_{T\to\infty}\Avg_{E\in \mathcal{F}_{\mathrm{congruent}}^{\leq T}}(k^{\rank(E)})\ll O(1)^{(\log{k})^2}.$$ From this it follows that:
\begin{cor}\label{congruent number theorem}
When the congruent number curves $E: y^2 = x^3 - D^2 x$ ($D\in \Z^+$ squarefree) are ordered by height, the $k$-th moment of the number of integral points $\#|E(\Z)|^k$ is bounded above by $O(1)^{k^2}$, where the implied constant is effective and absolute. In particular, the proportion of curves with at least $n$ integral points decays like $n^{-\Omega(\log{n})}$.
\end{cor}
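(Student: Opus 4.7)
The plan is a straightforward combination of Theorem \ref{amazing theorem} applied to $\mathcal{F}_{\mathrm{congruent}}$ with the rank-moment estimate $\Avg(k^{\rank(E)}) \ll O(1)^{(\log k)^2}$ promised in Section \ref{congruent number curves subsection}. First, I would apply the main theorem to obtain
$$\limsup_{T\to\infty} \Avg_{E\in \mathcal{F}_{\mathrm{congruent}}^{\leq T}}(\#|E(\Z)|^k) \leq C^k \cdot \limsup_{T\to\infty}\Avg_{E\in \mathcal{F}_{\mathrm{congruent}}^{\leq T}}(3^{k\cdot \rank(E)})$$
for an absolute effective constant $C > 0$. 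Rewriting $3^{k\cdot \rank(E)} = (3^k)^{\rank(E)}$ and substituting $3^k$ for $k$ in the rank-moment bound gives
$$\limsup_{T\to\infty}\Avg_{E\in \mathcal{F}_{\mathrm{congruent}}^{\leq T}}((3^k)^{\rank(E)}) \ll D^{(k\log 3)^2}$$
for an absolute constant $D > 0$; multiplying the two inequalities yields $\Avg(\#|E(\Z)|^k) \ll C_1^{k^2}$ for some absolute constant $C_1 > 0$, which is the first claim of the corollary.

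For the tail statement I would then apply Markov's inequality: for each positive integer $k$,
$$\Pr_{E\in \mathcal{F}_{\mathrm{congruent}}^{\leq T}}[\#|E(\Z)|\geq n] \leq n^{-k}\Avg(\#|E(\Z)|^k) \leq n^{-k} C_1^{k^2}.$$
Optimizing over $k$ --- specifically taking $k$ of order $\log n/\log C_1$, so that $C_1^{k^2}\leq n^{k/2}$ --- turns the right-hand side into $n^{-\Omega(\log n)}$, as claimed.

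Thus the corollary is a formal consequence of Theorem \ref{amazing theorem} and the promised rank-moment estimate. The only genuine obstacle lies in the deferred Section \ref{congruent number curves subsection}: one must extend Heath-Brown's averaging of $\#\Sel_2$ in the congruent-number family so as to control \emph{all} polynomial moments $\Avg(k^{\rank(E)})$, producing the doubly-exponential-in-$\log k$ growth that the argument above then converts into the Gaussian-type moment $\Avg(\#|E(\Z)|^k)\ll O(1)^{k^2}$ and into the super-polynomial tail $n^{-\Omega(\log n)}$. The deduction uses nothing else about the family beyond these two inputs.
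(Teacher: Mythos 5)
Your formal deduction is correct and is exactly what the paper leaves implicit: Theorem \ref{amazing theorem} for $\mathcal{F}_{\mathrm{congruent}}$, combined with the estimate $\Avg(k^{\rank(E)})\ll O(1)^{(\log k)^2}$ applied with $3^k$ in place of $k$, gives $\Avg(\#|E(\Z)|^k)\ll O(1)^{k^2}$, and Markov's inequality with $k\asymp \log n$ converts this into the tail bound $n^{-\Omega(\log n)}$. The paper treats this reduction as routine and does not write it out.

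The genuine gap is that the rank-moment estimate you invoke as an input is not available off the shelf: it is the entire content of the paper's proof of Corollary \ref{congruent number theorem}. Heath-Brown's Theorem 1 in \cite{heathbrowntwiststwo} controls the moments of $2^{s(D)}$ (hence of $k^{\rank(E)}$, via $\rank(E)\leq s(D)$) only for \emph{odd} squarefree $D$, whereas $\mathcal{F}_{\mathrm{congruent}}$ also contains $D\equiv 2\pmod 4$. The proof in Section \ref{congruent number curves subsection} is devoted precisely to extending Heath-Brown's upper bound to these even $D$: one modifies the $2$-descent set-up of \cite{heathbrowntwistsone} by choosing representatives with $|x|_2=|D|_2$ instead of $|x|_2=1$, drops the local solubility condition at $p=2$ (permissible because only an \emph{upper} bound on $2^{k\cdot s(D)}$ is required), checks that the quadratic form $P$ governing which Legendre symbols appear is unchanged while only $R$ changes (so the shape of the bound is unaffected), and trivially bounds the remaining ``leading terms'' sum. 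Your proposal names this obstacle but leaves it entirely untouched, so as written it reduces the corollary to an unproved statement rather than proving it; to complete the argument you must carry out this extension to $D\equiv 2\pmod 4$ (or otherwise control that half of the family).
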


In Section \ref{kane thorne curves section} we describe how to extend work of Kane \cite{kane} and Kane-Thorne \cite{kanethorne} to prove that, for $\mathcal{F} = \mathcal{F}_{B=0}$, there is a very large (we will quantify this in the proof) subfamily $\widetilde{\mathcal{F}}_{B=0}\subseteq \mathcal{F}_{B=0}$ for which $$\Avg_{E\in \widetilde{\mathcal{F}}_{B=0}^{\leq T}}(k^{\rank(E)})\ll O(1)^{(\log{k})^2}.$$ From this it will follow that:
\begin{cor}\label{B=0 theorem}
When the curves $E: y^2 = x^3 + Ax$ ($A\in \Z^+$ fourth-power free) are ordered by height, the $k$-th moment of the number of integral points $\#|E(\Z)|^k$ is bounded above by $O(1)^{k^2}$, where the implied constant is effective and absolute. In particular, the proportion of curves with at least $n$ integral points decays like $n^{-\Omega(\log{n})}$.
\end{cor}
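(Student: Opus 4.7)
The plan is to compose Theorem \ref{amazing theorem} with the forthcoming Kane-Thorne-style rank-moment bound on the subfamily $\widetilde{\mathcal{F}}_{B=0}\subseteq \mathcal{F}_{B=0}$, and then control the complementary sliver of the family using a density bound combined with a crude $2$-descent bound on the rank.

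Applying Theorem \ref{amazing theorem} with $\mathcal{F} = \mathcal{F}_{B=0}$ reduces the corollary to the estimate
$$\limsup_{T\to\infty}\Avg_{E\in \mathcal{F}_{B=0}^{\leq T}}(3^{k\cdot\rank(E)})\leq O(1)^{k^2}.$$
Split the family as $\mathcal{F}_{B=0}^{\leq T} = \widetilde{\mathcal{F}}_{B=0}^{\leq T}\sqcup (\mathcal{F}_{B=0}\setminus \widetilde{\mathcal{F}}_{B=0})^{\leq T}$. On the main piece I would apply the Kane-Thorne extension with its free parameter set to $3^k$, obtaining
$$\Avg_{E\in \widetilde{\mathcal{F}}_{B=0}^{\leq T}}\left((3^k)^{\rank(E)}\right)\ll O(1)^{(\log 3^k)^2} = O(1)^{(k\log 3)^2} \ll O(1)^{k^2},$$
which is exactly the desired shape.

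The remaining, and most serious, obstacle is showing that the complement $(\mathcal{F}_{B=0}\setminus \widetilde{\mathcal{F}}_{B=0})^{\leq T}$ contributes negligibly. Two ingredients are needed: (i) a quantitative density statement that the proportion of curves of height at most $T$ lying outside $\widetilde{\mathcal{F}}_{B=0}$ decays polynomially in $T$, and (ii) a uniform pointwise bound on $\rank$ on the complement. For (ii), standard $2$-descent on $E_A: y^2 = x^3 + Ax$ gives $\rank(E_A)\leq \dim_{\F_2}\Sel_2(E_A)\ll \omega(A)\ll \log T/\log\log T$ for all $E_A$ of height at most $T$. Combining (i) and (ii), the complement's contribution to $\Avg(3^{k\cdot\rank(E)})$ is bounded by $T^{-c}\cdot 3^{O(k\log T/\log\log T)} = T^{-c+o(1)}$ for each fixed $k$, and vanishes in the limit. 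Thus the global $\limsup$ is entirely controlled by the Kane-Thorne bound on $\widetilde{\mathcal{F}}_{B=0}$, giving the $O(1)^{k^2}$ moment estimate.

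Finally, the assertion that the proportion of curves with at least $n$ integral points decays like $n^{-\Omega(\log n)}$ follows from Markov's inequality: for each $k$,
$$\Pr_{E\in \mathcal{F}_{B=0}^{\leq T}}(\#|E(\Z)|\geq n)\leq n^{-k}\cdot O(1)^{k^2},$$
and optimizing by taking $k\asymp \log n$ produces an exponent of size $-\Omega(\log n)$. In summary, the real technical work lies not in this deduction but in (a) extending Kane's and Kane-Thorne's Selmer-distribution techniques from the congruent-number family to the quartic-twist family $y^2 = x^3 + Ax$, and (b) quantifying the sparsity of the complementary subfamily sharply enough to make its contribution disappear in the limit.
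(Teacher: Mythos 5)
There is a genuine gap in your treatment of the complement of $\widetilde{\mathcal{F}}_{B=0}$. The defining conditions of that subfamily (as the paper sets it up: $A$ almost squarefree, $\omega(A)\leq M\log\log A$, and $A$ not divisible by a modulus carrying a problematic Siegel zero) cannot be imposed at a cost that is polynomial in $T$: the set of $A\ll T^2$ with more than $M\log\log A$ prime factors has density only $(\log T)^{-\Omega_M(1)}$, and the Siegel-zero modulus may be as small as a fixed power of $\log T$, so removing its multiples again only saves a power of $\log T$. Thus your ingredient (i) is false as stated, and once the density saving is merely polylogarithmic your ingredient (ii) no longer closes the argument: the worst-case bound $3^{k\cdot\rank(E)}\leq 3^{O(k\log T/\log\log T)}$ grows faster than any fixed power of $\log T$, so the product $(\log T)^{-M}\cdot 3^{O(k\log T/\log\log T)}$ diverges rather than vanishes. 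In short, a pointwise rank bound plus polylog sparsity is not enough.

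The paper's route around this is to control the complement \emph{on average} rather than pointwise, and at the level of integral points rather than of rank moments over the full family: it proves (Lemma \ref{better pointwise bound for B=0}) that for any subfamily $\mathcal{G}\subseteq\mathcal{F}_{B=0}^{\leq T}$ one has $\sum_{E\in\mathcal{G}}\#|E(\Z)|\ll \#|\mathcal{F}_{B=0}^{\leq T}|\cdot\bigl(\#|\mathcal{G}|/\#|\mathcal{F}_{B=0}^{\leq T}|\bigr)^{\Omega(1)}\cdot(\log T)^{O(1)}$, using Helfgott--Venkatesh together with $\rank(E_{A,0})\ll\omega(A)$ and the averaged estimate $\sum_{n\leq X}O(1)^{\omega(n)}\ll X(\log X)^{O(1)}$; then a density saving of $(\log T)^{-M}$ with $M$ chosen large (depending on $k$ for the $k$-th moment) beats the $(\log T)^{O(1)}$ loss and makes the complement negligible. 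Your outline can be repaired in the same spirit — replace the worst-case bound $\rank\ll\log T/\log\log T$ by H\"older against the average of $O(1)^{k\omega(A)}$, which is $(\log T)^{O_k(1)}$, and let the constant $M$ in the definition of $\widetilde{\mathcal{F}}_{B=0}$ depend on $k$ — but as written the complement step fails, and this is precisely the point your item (b) would have had to get right. The remaining pieces (Theorem \ref{amazing theorem} as the reduction to rank moments, substituting $3^k$ into the subfamily bound to get $O(1)^{(k\log 3)^2}$, the reduction of $2$-power rank moments to $\phi$-Selmer moments via Lemma \ref{selmer groups lemma}, and the Markov/tail deduction) match the paper's deduction and are fine.
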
\noindent
The subfamily $\widetilde{\mathcal{F}}_{B=0}$ will essentially be the subfamily determined by the conditions that $A$ be almost squarefree, have a number of prime factors bounded above by a large constant times $\log\log{A}$ (the expected number), and not be a multiple of a modulus supporting a character with a problematic Siegel zero.

Finally, in the case of $\mathcal{F} = \mathcal{F}_{A=0}$, work of Ruth \cite{ruth} bounds the average of $\#|\Sel_2(E)|$, but a bound on the average of $3^{\rank{(E)}}$ is not yet known.\footnote{However, to show boundedness of the average of $\#|E(\Z)|$ over this family, one may proceed as follows. Integral points on $y^2 = x^3 + B$ give solutions to $-(4a^3 + 27b^2) = 108B$ via $a := -3x, b := 2y$, whence also binary cubics $x^3 + axy^2 + by^3$ with discriminant $108B$. Now, by Davenport-Heilbronn, the number of binary cubic forms $f(x,y)$ with discriminant $|\Delta|\ll X$, when taken up to $\GL_2(\Z)$ equivalence, is $\ll X$. (See e.g.\ Theorem 5 of \cite{bhargavashankartsimerman}.) It therefore suffices to show that there are $\ll 1$ many forms $f$ of shape $x^3 + axy^2 + by^3$ in each equivalence class. If an equivalence class has no such forms, then we are done. Otherwise, we need only check that there are $\ll 1$ many $\gamma \in \GL_2(\Z)$ that take $x^3 + axy^2 + by^3$ to a form of shape $x^3 + a'xy^2 + b'y^3$. Note that, given such a $\gamma =: \left(\begin{array}{cc} p & q\\ r & s\end{array}\right)$, $(f\circ \gamma)(1,0) = 1$, so that $f(p,r) = 1$. Moreover, the condition that the $x^2y$ term be zero gives a cubic or linear equation in $q$ depending on whether or not $r=0$ upon imposing $ps - qr = \pm 1$. Hence the number of such $\gamma$ is at most six times the number of solutions of $f(p,r) = 1$ with $p,r\in \Z$. But, by Thue's theorem in the strengthened form of e.g.\ Bennett (who gives an upper bound of $10$) in \cite{bennett}, this is uniformly bounded, completing the argument.}

Having stated our main results, let us now detail the organization of the paper. In Section \ref{notation, previous results, and outline} we set notation, state previous results towards these theorems, and give a detailed argument (leaving inessential details to references to Section \ref{constant theorem proof} along the way) towards Theorem \ref{constant theorem}, proving boundedness by $O(1)$ rather than an explicit constant. We do this because the length of the argument in Section \ref{constant theorem proof} potentially obscures the main ideas, which are already present in the proof of boundedness. In Section \ref{constant theorem proof} we prove Theorem \ref{constant theorem}, leaving the discussion of the optimization of our bounds to Appendix \ref{appendix}. In Section \ref{amazing theorem proof} we then prove Theorem \ref{amazing theorem} for the remaining three families by following the general method used to prove Theorem \ref{constant theorem}. We also prove Corollaries \ref{congruent number theorem} and \ref{B=0 theorem} by adapting the methods of Heath-Brown and Kane-Thorne to control sizes of Selmer groups in these families. Finally, in Appendix \ref{appendix} we provide details of the optimization for Theorem \ref{constant theorem}.

\section{Acknowledgements}

Theorem \ref{constant theorem}, without the explicit constant, was the subject of my senior thesis at Harvard, supervised by Jacob Tsimerman. I would like to thank him and Arul Shankar for suggesting the problem and for all their patience. I would also like to thank Henry Cohn for his help with bounds on spherical codes and for allowing me to use a program he wrote to optimize linear programming upper bounds for codes on $\RP^n$. I would further like to thank Manjul Bhargava, Peter Bruin, Noam Elkies, John Cremona, Roger Heath-Brown, Harald Helfgott, Emmanuel Kowalski, Barry Mazur, Joseph Silverman, Katherine Stange, and Yukihiro Uchida for answering questions related to this work. Finally, I would like to thank Michael Stoll for pointing out a mistake in a previous version of this paper.

\section{Notation, previous results, and outline of the argument}\label{notation, previous results, and outline}

\subsection{Notation}

Let us now set notation. By $f\ll_\theta g$ we will mean that there exists some positive constant $C_\theta > 0$ depending only on $\theta$ such that $|f|\leq C_\theta |g|$ pointwise. If $\theta$ is omitted (i.e.\ we write $f\ll g$), then the implied constant will be absolute. By $f\asymp_\theta g$ we will mean $f\gg_\theta g$ and $f\ll_\theta g$. By $O_\theta(g)$ we will mean a quantity which is $\ll_\theta g$, and by $\Omega_\theta(g)$ we will mean a quantity that is $\gg_\theta g$. By $o(1)$ we will mean a quantity that approaches $0$ in the relevant limit (which will always be unambiguous). By $f = o(g)$ we will mean $f = o(1)\cdot g$, and by $f\asymp g$ we will mean $f = (1 + o(1))g$. We will write $(a,b)$ for the greatest common divisor of two integers $a,b\in \Z$, $\omega(n)$ for the number of prime factors of $n$, $v_p$ for the $p$-adic valuation, $|\cdot|_v$ for the absolute value at a place $v$ of a number field $K$ (normalized so that the product formula holds), and $h(x)$ for the absolute Weil height of $x\in \Qbar$ --- i.e., $$h(x) := \sum_w \frac{[K_w : \Q_v]}{[K : \Q]} \log^+{|x|_w},$$ the sum taken over all places $w$ of $K$ with $v := w\vert_\Q$ and $\log^+(a) := \max(\log{a}, 0)$. Similarly $H(x) := \exp(h(x))$ will denote the multiplicative Weil height of $x\in \Qbar$. Note that, for $\frac{a}{b}\in \Q$ in lowest terms, $H(\frac{a}{b}) = \max(|a|,|b|)$. Given a rational point $P = (x,y)$ on $E_{A,B}: y^2 = x^3 + Ax + B$, $h(P)$ and $H(P)$ will denote $h(x)$ and $H(x)$, respectively. $$\hat{h}(P) := \lim_{n\to\infty} \frac{h(2^n P)}{4^n}$$ will denote the canonical height of $P$, with N\'{e}ron local heights $\hat{\lambda}_v$ such that $$\sum_v \hat{\lambda}_v = \hat{h}.$$ We will similarly write $$\lambda_v(\cdot) := \log^+{|\cdot|_v}.$$ By $\Delta$ or $\Delta_{A,B}$ we will mean $-16(4A^3 + 27B^2)$, the discriminant of $E_{A,B}$. We will write $N_{A,B}$ for the conductor of $E_{A,B}$, defined by $$N_{A,B} = \prod_{p\vert \Delta} p^{e_p},$$ with $e_p = 1$ if $p$ has multiplicative reduction at $p$, and otherwise $e_p\geq 2$ with equality if $p\neq 2,3$. The definitions of $e_2$ and $e_3$ are more complicated, but we will only use that $e_2\leq 8$ and $e_3\leq 5$. By $\psi_n(P)$ we will mean the $n$-th division polynomial of $E_{A,B}$, with zeroes at the nonidentity $n$-torsion points and of homogeneous degree $\frac{n^2 - 1}{2}$ when $x$ is given degree $1$, $y$ degree $\frac{3}{2}$, $A$ degree $2$, and $B$ degree $3$. Note that multiplication by $n$ is then given by $$nP = \left(x(P) - \frac{\psi_{n-1}(P)\psi_{n+1}(P)}{\psi_n(P)^2}, \frac{\psi_{2n}(P)}{2\psi_n(P)^4}\right).$$ In general $\psi_{2n+1}(P)$ is a polynomial of degree $2n^2 + 2n$ in $x,A,B$ with leading coefficient (in $x$) equal to $2n+1$, and $\psi_{2n}(P)$ is $y$ times a polynomial in $x,A,B$ of degree $2n^2 - 2$ with leading coefficient (in $x$) equal to $2n$. By homogeneity, both these polynomials in $x$ have no term of one degree less in $x$ (i.e., they are of the form $c_d x^d + c_{d-2} x^{d-2} + \cdots + c_0$). Finally, we will abuse the word ``average'' to mean ``limsup of the average'' throughout.

\subsection{Previous results}

Now fix $A$ and $B$ for which $\Delta_{A,B}\neq 0$. The first general result bounding integral points on the curve $E_{A,B}$ is Siegel's famous finiteness theorem:
\begin{thm}[Siegel]
$E_{A,B}(\Z)$ is finite.
\end{thm}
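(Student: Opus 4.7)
The plan is to combine the Mordell-Weil theorem with Roth's theorem on Diophantine approximation via a descent argument. First I would invoke Mordell-Weil to conclude that $E_{A,B}(\Q)$ is a finitely generated abelian group, so the quotient $E_{A,B}(\Q)/m E_{A,B}(\Q)$ is finite for every integer $m \geq 2$. Suppose for contradiction that $E_{A,B}(\Z)$ is infinite; extract a sequence $(P_n)_n$ of distinct integral points with $h(P_n) \to \infty$, and by pigeonhole pass to a subsequence lying in a single coset $R + m E_{A,B}(\Q)$, so that $P_n = m Q_n + R$ with $Q_n \in E_{A,B}(\Q)$.

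Next, since $[m]\colon E_{A,B} \to E_{A,B}$ is a degree-$m^2$ isogeny, each $x$-coordinate $x(Q_n)$ lies in a number field $K/\Q$ of degree at most $m^2$ (depending only on $R$ and $m$), and the parallelogram law for the canonical height yields $\hat{h}(Q_n) = \tfrac{1}{m^2}\hat{h}(P_n - R)$, so that $h(x(Q_n)) \asymp \tfrac{1}{m^2} h(x(P_n))$ up to a bounded additive error. The key step is to leverage integrality of $P_n$: writing $x(P_n) = x(mQ_n + R)$ via the division polynomial identity in the paper's notation, the fact that $x(P_n) \in \Z$ while $h(x(Q_n))$ is roughly $m^{-2} h(x(P_n))$ forces $Q_n$ to be $v$-adically very close, at some place $v$ of $K$, to one of the finitely many points where the rational map $Q \mapsto x(mQ + R)$ acquires a pole (or to a preimage of $\infty$). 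Quantitatively, Néron's decomposition $\hat{h} = \sum_v \hat{\lambda}_v$ combined with the boundedness of $\hat{\lambda}_v$ away from these poles pins $x(Q_n)$ to an approximation $|x(Q_n) - \alpha|_v \ll H(x(Q_n))^{-c m^2}$ of a fixed $\alpha \in \overline{K}$, with $c > 0$ independent of $m$.

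Finally, taking $m$ large enough that $c m^2 > 2 + \eps$ and applying Roth's theorem over $K$ to approximations of $\alpha$ by elements of bounded-degree extensions yields a contradiction, so $E_{A,B}(\Z)$ must be finite. The main obstacle is the middle step: turning integrality of $P_n$ into a strong $v$-adic approximation statement for $x(Q_n)$ uniform in $n$. This is the content of the Siegel--Mahler lemma on $S$-integral points being driven to a single place, and in the modern formulation it is precisely the statement that the local height $\hat{\lambda}_v(Q_n)$ must grow linearly in $\hat{h}(P_n)$ at some $v$; everything else --- Mordell--Weil, pigeonhole, the height parallelogram law, and Roth --- is then essentially off the shelf.
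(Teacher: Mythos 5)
Your proposal cannot be compared against an internal proof, because the paper does not prove this statement: Siegel's theorem is quoted as a classical result, and the paper's contribution begins downstream of it. What you have written is the standard Mordell--Weil $+$ descent $+$ Roth argument, and it is correct in outline; it is also precisely the strategy the paper later makes quantitative for its ``large'' points, where one writes $P = 3Q + R_{\vec{a}}$, uses integrality of $P$ to show that $x(Q)$ approximates a root $x(\tilde{R})$ of $x(3\tilde{R}) = x(-R_{\vec{a}})$ with exponent close to $\frac{9}{2}$, and then replaces Roth's theorem by an explicit bivariate Roth's lemma (two approximations of very different heights) so as to get uniform \emph{counting} bounds rather than mere finiteness. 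A few points in your sketch should be tightened. First, $x(Q_n)$ lies in $\Q$, since $Q_n \in E_{A,B}(\Q)$; it is the limit point $\alpha$, a preimage of $O$ under $Q \mapsto mQ + R$, that lives in an extension of degree $O(m^2)$, and Roth is applied to approximations of this fixed algebraic $\alpha$ by elements of the fixed field $\Q$ --- no approximation by varying bounded-degree elements is needed. Second, to get the exponent $c\,m^2$ with $c$ independent of $m$ you should say explicitly that $Q \mapsto mQ + R$ is unramified, so the $v$-adic distance from $Q_n$ to $\alpha$ is comparable to the distance from $P_n$ to $O$, which integrality pins down: all finite local heights of $x(P_n)$ vanish, so the archimedean one carries the full height and $P_n$ is within roughly $H(P_n)^{-1/2}$ of $O$; dividing $h$ by $m^2$ when passing to $Q_n$ gives the amplified exponent. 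Third, you should dispose of the degenerate case in which the approached preimage is $O$ itself (possible only when $R$ can be taken trivial), where instead of Roth one notes directly that a rational number cannot exceed a power greater than $1$ of its own height. With these caveats the argument is complete, and it is the same engine --- made explicit and uniform --- that drives Sections 3 and 4 of the paper.
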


Next Baker, as an application of his theory of linear forms in logarithms, gave an effective upper bound on the heights of the integral points on $E_{A,B}$:
\begin{thm}[Baker, \cite{baker}]
Write $H:=H(E_{A,B})$. Let $P\in E_{A,B}(\Z)$. Then: $$|x(P)|\leq e^{(10^7 H)^{10^7}}.$$
\end{thm}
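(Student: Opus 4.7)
The plan is to reduce to Baker's effective theorem on linear forms in logarithms, applied in the splitting field of the cubic. Let $K := \Q(e_1, e_2, e_3)$ be the splitting field of $f(T) := T^3 + AT + B$, so that $[K:\Q] \leq 6$ and standard Minkowski-type estimates give $|\disc(K)|, h_K, \reg_K \ll H^{O(1)}$. For $P = (x,y) \in E_{A,B}(\Z)$, the factorization $y^2 = \prod_i (x - e_i)$ holds in $\RI{K}$. Any prime $\pfrak$ of $\RI{K}$ dividing two of the $(x-e_i)$ must divide the discriminant of $f$, so the ideals factor as $(x - e_i) = \afrak_i \bfrak_i^2$, with the $\afrak_i$ drawn from a finite set controlled by $\disc(K)$. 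Combined with the finiteness of $\Cl(K)$ and of $\RI{K}^\times/(\RI{K}^\times)^2$, this upgrades to an element-level factorization $x - e_i = \alpha_i \beta_i^2$ with $h(\alpha_i) \ll H^{O(1)}$ and $\beta_i \in K^\times$.

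Next I would fix a basis: let $u_1, \ldots, u_r$ be fundamental units of $\RI{K}$ and $\pi_1, \ldots, \pi_s$ be generators for the small-norm primes of $\RI{K}$ where $\beta_i$ can have support. Writing $\beta_i = \zeta_i \prod_j u_j^{n_{ij}} \prod_k \pi_k^{m_{ik}}$ (with $\zeta_i$ a root of unity, absorbed into $\alpha_i$), the integer exponents satisfy $|n_{ij}|, |m_{ik}| \ll \log|x|$, since $h(\beta_i) \ll \log|x|$ and $h$ is comparable to the Euclidean norm of the exponent vector in any fixed basis. Siegel's identity $(x-e_1)-(x-e_2) = e_2 - e_1$ now gives, in an appropriate archimedean embedding of $K$ with principal branches,
\[
\Lambda := \log\frac{\alpha_1}{\alpha_2} + 2\sum_j (n_{1j} - n_{2j}) \log u_j + 2\sum_k (m_{1k} - m_{2k}) \log \pi_k = \log\frac{x-e_1}{x-e_2} = O(H/|x|).
\]
Thus $\Lambda$ is a linear form in logarithms of $O(1)$ fixed algebraic numbers of heights $\ll H^{O(1)}$ and degrees $\leq 6$, with integer coefficients of size $\ll \log|x|$.

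By Baker's effective lower bound (e.g.\ in the Baker-W\"ustholz form), one obtains $\log|\Lambda| \geq -C(H) \log\log|x|$ with $C(H) \leq (10^7 H)^{10^7 - 1}$ upon tracing the explicit constants. Comparing with $\log|\Lambda| \leq \log H - \log|x|$ yields the inequality $\log|x| \leq C(H) \log\log|x| + O(\log H)$, from which a routine bootstrap (since $L \leq C \log L$ forces $L \leq 2 C \log C$ for $L$ large) gives $\log|x| \leq (10^7 H)^{10^7}$.

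The main obstacle is entirely quantitative: one must carry through explicit bounds on $\disc(K)$, $h_K$, $\reg_K$, on generators and heights of fundamental units and of small-norm principal ideals, and most crucially the explicit constants in the Baker-W\"ustholz lower bound on $|\Lambda|$. The enormous ``$10^7$'' in the exponent is a feature, not a bug, of effective transcendence theory as it existed in Baker's original work; modern refinements (Matveev, de Weger, etc.) would shrink the constants, but the double-exponential qualitative shape of the bound is inherent to this approach.
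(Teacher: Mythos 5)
The paper offers no proof of this statement---it is quoted, constant and all, from Baker's paper---so the only question is whether your sketch is a viable route to the bound. As written it has a genuine gap: the step where you write $\beta_i = \zeta_i\prod_j u_j^{n_{ij}}\prod_k \pi_k^{m_{ik}}$ with the $\pi_k$ ``generators for the small-norm primes of $\RI{K}$ where $\beta_i$ can have support'' is false. The observation that a prime dividing two of the $(x-e_i)$ must divide $\disc(f)$ controls only the \emph{squarefree} parts $\afrak_i$ of the ideals $(x-e_i)=\afrak_i\bfrak_i^2$; it yields $x-e_i=\alpha_i\beta_i^2$ with $h(\alpha_i)\ll H^{O(1)}$ but puts no restriction at all on the support of $\bfrak_i$ (for instance $x-e_1$ may be a unit times the square of a prime of enormous norm, in which case $\beta_1$ is essentially that huge prime). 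Hence $\beta_1/\beta_2$ is not an $S$-unit for any fixed small $S$, your $\Lambda$ is not a linear form in logarithms of finitely many \emph{fixed} algebraic numbers of height $\ll H^{O(1)}$ with integer coefficients, and Baker's lower bound cannot be applied to it in the way you claim.

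The missing idea is the further descent that the classical argument performs before invoking linear forms in logarithms. Pass to $L:=K(\sqrt{\alpha_1},\sqrt{\alpha_2},\sqrt{\alpha_3})$ (degree and discriminant still polynomially bounded in $H$) and set $\nu_{ij}:=\sqrt{\alpha_i}\,\beta_i-\sqrt{\alpha_j}\,\beta_j$. Since $\nu_{ij}\cdot(\sqrt{\alpha_i}\,\beta_i+\sqrt{\alpha_j}\,\beta_j)=e_j-e_i$ has height $\ll H^{O(1)}$, each $\nu_{ij}$ is an $S$-unit (for a fixed small $S$ depending only on $A,B$) times an element of bounded height, even though the $\beta_i$ themselves are not. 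Siegel's identity $\nu_{12}+\nu_{23}+\nu_{31}=0$ then produces a genuine two-term unit equation $\lambda_1u_1+\lambda_2u_2=1$ with $\lambda_i$ of height $\ll H^{O(1)}$ and $u_i$ lying in a fixed finitely generated group with bounded-height fundamental generators; the exponent vector has size $\ll_H\log|x|$, a suitable embedding makes one term exponentially small in $\log|x|$, and only at that point does Baker's theorem give $\log|x|\leq C(H)\log\log|x|$ and, after the bootstrap you describe, the stated bound. Your remaining bookkeeping (bounds on $\disc$, class number, regulator, and tracing explicit constants) is the standard quantitative part, but without the $\sqrt{\alpha_i}$-descent to a unit equation the argument does not go through.
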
\noindent
This of course gives a bound on the number of integral points on $E_{A,B}$.

As in the case of Roth's theorem in Diophantine approximation, effectively bounding the \emph{number} of solutions is much easier than bounding their \emph{heights}. Indeed, Siegel's argument was already effective, and Silverman and Hindry-Silverman were the first to use it to give an explicit upper bound. They obtained:
\begin{thm}[Silverman, \cite{silvermansiegel}]
$$\#|E_{A,B}(\Z)|\ll O(1)^{\rank(E_{A,B}) + \omega(\Delta)}.$$ In fact, one can further reduce $\omega(\Delta)$ to $\omega(\Delta_{ss})$, the number of primes of semistable bad reduction.
\end{thm}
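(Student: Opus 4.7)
The plan is to embed $E_{A,B}(\Z)$ into the Mordell-Weil lattice $\Lambda := E_{A,B}(\Q)/E_{A,B}(\Q)_{\mathrm{tors}}$, a rank-$r$ lattice in the Euclidean space $\Lambda\otimes\R\cong\R^r$ with inner product $\langle P, Q\rangle := \tfrac{1}{2}(\hat{h}(P+Q) - \hat{h}(P) - \hat{h}(Q))$, and then bound the image by a combination of sphere-packing and spherical-code arguments. Since torsion is bounded absolutely by Mazur, it suffices to count distinct images, and I would split integral points into ``small'' with $\hat{h}(P)\leq H_0$ and ``large'' with $\hat{h}(P) > H_0$, where $H_0$ is an absolute constant to be chosen in terms of the constants coming from the gap principle below.

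For the small points, I would invoke a lower bound for canonical heights of non-torsion rational points — in the qualitative form needed here, a Silverman-type bound gives $\hat{h}(P)\gg c^{-\omega(\Delta)}$ for some absolute $c$. Combined with an elementary covering argument (tiling the ball of radius $\sqrt{H_0}$ in $\R^r$ by fundamental parallelotopes of $\Lambda$), this yields a count of small points bounded by $O(1)^{r + \omega(\Delta)}$. The torsion factor in passing between $E_{A,B}(\Z)$ and $\Lambda$ contributes only an absolute constant.

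For the large points, the key ingredient is a Mumford-style gap principle: if $P, Q\in E_{A,B}(\Z)$ both have $\hat{h}\geq H_0$, then their images in $\R^r$ satisfy $\langle P, Q\rangle/\sqrt{\hat{h}(P)\hat{h}(Q)}\leq 1 - \delta$ for some absolute $\delta > 0$. To prove this one decomposes $\hat{h} - \tfrac{1}{2} h\circ x = \sum_v(\hat{\lambda}_v - \tfrac{1}{2}\lambda_v)$ and notes that at finite places $v = p$ of good reduction, integrality forces $\lambda_p(P) = 0$ and $\hat{\lambda}_p(P) = 0$, so only the archimedean and bad-prime terms can contribute; the bad-prime contributions are each uniformly bounded, contributing a total of $O(\omega(\Delta))$ that we absorb in $H_0$. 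Applied to $P, Q,$ and $P-Q$ — using the addition formula to express $x(P-Q)$ in terms of the integer $x(P)-x(Q)$, which ensures $\hat{\lambda}_p(P-Q)\geq 0$ at good primes — this gives a lower bound on $\hat{h}(P-Q)$ strong enough to force the angular separation. The unit vectors $P/\sqrt{\hat{h}(P)}$ then form a spherical code in $S^{r-1}$ with minimum angle $\geq\arccos(1-\delta)$, and the number of such codewords is $O(1)^r$ by an elementary volume-packing bound (or Kabatyanski-Levenshtein).

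The main obstacle is establishing the gap principle with a constant $\delta$ truly independent of $(A,B)$ while paying only the claimed $O(1)^{\omega(\Delta)}$ factor. The refinement to $\omega(\Delta_{ss})$ comes from observing that at additive primes the relevant local N\'eron function $\hat{\lambda}_p$ on integral points is uniformly bounded (integral points reduce into the identity component of the N\'eron model up to a bounded index), so such primes contribute only an absolute constant rather than a product over bad primes; careful bookkeeping of the N\'eron local heights at each Kodaira reduction type is what drives this sharpening.
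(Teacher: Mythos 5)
First, note that the paper does not prove this statement at all: it is quoted as background from Silverman's \emph{A quantitative version of Siegel's theorem}, so your proposal can only be measured against Silverman's argument and against the analogous machinery the paper builds for its own main theorem. Measured that way, the central gap is in your treatment of the ``large'' points. The Mumford-style gap principle does \emph{not} give a uniform angular separation between all pairs of large integral points: it gives (for $\hat{h}(P)\geq\hat{h}(Q)$) roughly $\hat{h}(P+Q)\leq 2\hat{h}(P)+\hat{h}(Q)+\cdots$, i.e.\ $\cos\theta_{P,Q}\leq\frac{1}{2}\sqrt{\hat{h}(P)/\hat{h}(Q)}+\cdots$, which is a nontrivial bound only when the two heights lie within a bounded ratio. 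So the images $P/\sqrt{\hat{h}(P)}$ do \emph{not} form a spherical code with an absolute minimal angle, and your $O(1)^r$ count for large points collapses: all you get is $O(1)^r$ points per bounded-ratio height window, and the number of windows is controlled by nothing intrinsic ($r$, $\omega(\Delta)$) --- a priori only by Baker-type height bounds, which introduce curve-dependent factors. This is exactly why Siegel/Silverman (and this paper, in its classes $\III$ and $\IV$) must add a genuinely different ingredient for points of large height: split $E(\Q)$ into the $O(1)^r$ cosets of $E(\Q)/mE(\Q)$ and show, by a quantitative Roth/Siegel Diophantine-approximation argument (Roth's lemma for the approximations $x(Q)$ to the algebraic points $\tilde{R}$ with $m\tilde{R}=-R$), that each coset contains at most an absolute number of large integral points. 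Your sketch omits this step entirely, and it is the heart of the theorem.

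Two further points would also need repair. First, your claim that at bad primes the discrepancies $\hat{\lambda}_p-\lambda_p$ are ``each uniformly bounded, contributing a total of $O(\omega(\Delta))$'' is false: at a prime of multiplicative (or additive) reduction the discrepancy is of size up to a constant times $v_p(\Delta)\log p$, so the total is $O(\log|\Delta|)$, which cannot be absorbed into an absolute $H_0$; this is precisely why the paper, in its own gap principle (Lemmas \ref{weil vs canonical} and \ref{helfgott-mumford gap principle}), first restricts to curves whose discriminant is essentially squarefree, and why Silverman must instead track the component of the N\'eron model at each bad prime, the bookkeeping that produces the $O(1)^{\omega(\Delta_{ss})}$ factor. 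Second, your small-point count has the wrong shape: packing a ball of absolute radius against a minimal height lower bound of size $c^{-\omega(\Delta)}$ yields $\bigl(1+O(c^{\omega(\Delta)/2})\bigr)^{r}=O(1)^{r\cdot\omega(\Delta)}$, a multiplicative exponent, not the claimed additive $O(1)^{r+\omega(\Delta)}$; in Silverman's argument the $O(1)^{\omega}$ arises as the number of classes in a partition (behavior at the bad primes), with an $O(1)^r$ count \emph{per class}, which is what makes the exponent additive.
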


\begin{thm}[Hindry-Silverman, \cite{hindrysilverman}]\label{hindrysilvermanszpirobound}
$$\#|E_{A,B}(\Z)|\ll O(1)^{\rank(E_{A,B}) + \sigma_{E_{A,B}}},$$ where $$\sigma_{E_{A,B}} := \frac{\log{|\Delta_{A,B}|}}{\log{N_{A,B}}}$$ is the Szpiro ratio of $E_{A,B}$ (here $N_{A,B}$ is the conductor of $E_{A,B}$).
\end{thm}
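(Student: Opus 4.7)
The plan is to work in the Mordell-Weil lattice $\Lambda := E_{A,B}(\Q)/E_{A,B}(\Q)_{\mathrm{tors}}$ of rank $r := \rank(E_{A,B})$, equipped with the canonical height quadratic form, and to combine a Mumford-type gap principle with a spherical-code bound, supplemented by a local analysis of reduction profiles at the primes dividing $\Delta := \Delta_{A,B}$. Throughout set $N := N_{A,B}$ and $\sigma := \log|\Delta|/\log N$. Since integral points map into $\Lambda$ at most $16$-to-one (Mazur), it suffices to bound the image of $E_{A,B}(\Z)$ in $\Lambda$.

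First I would establish a gap principle: for distinct non-torsion integral $P, Q \in E_{A,B}(\Z)$,
$$\frac{\langle P, Q\rangle_{\hat{h}}}{\sqrt{\hat{h}(P)\,\hat{h}(Q)}} \;\leq\; \frac{3}{4} + O\!\left(\frac{\log|\Delta|}{\min(\hat{h}(P),\hat{h}(Q))}\right).$$
The key input is the local decomposition $\hat{h} = \hat{\lambda}_\infty + \sum_{p \mid \Delta}\hat{\lambda}_p$: for integral $P$ the non-archimedean summands are nonnegative and bounded by $c\,v_p(\Delta)\log p$, so they sum to $O(\log|\Delta|)$, while the archimedean local heights obey the parallelogram identity up to a bounded error (coming from the theta-series expansion, or equivalently from the bounded difference between $\hat{\lambda}_\infty$ and $\tfrac12\log\max(|x|,1)$).

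In the large-height regime $\hat{h}(P) \geq K \log|\Delta|$ for a suitable absolute $K$, the gap principle forces pairwise cosines of angles in $\Lambda\otimes \R$ below $\tfrac34 + o(1)$, so a Kabatyanskii-Levenshtein spherical-code bound yields at most $C^r$ integral points, for an absolute constant $C$. For integral points with $\hat{h}(P) \leq K\log|\Delta|$ I would partition by the reduction profile $(c_p(P))_{p \mid \Delta} \in \prod_{p \mid \Delta}\Phi_p$, where $\Phi_p$ is the N\'{e}ron component group (of order $v_p(\Delta)$ at multiplicative primes, bounded at additive ones). Inside a fixed profile, integral points lie in a coset of a finite-index sublattice of $\Lambda$, and the same gap-plus-sphere-packing argument gives at most $C'^r$ points per profile.

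The main obstacle is controlling the number of realized reduction profiles: the naive bound $\prod_{p \mid \Delta}|\Phi_p|$ can be as large as $|\Delta|^{\Omega(1)}$, far worse than the target $O(1)^\sigma$. To close this gap I would exploit that, for small-height integral points, one has the constraint
$$\sum_{p \mid \Delta} \frac{c_p(v_p(\Delta)-c_p)}{2\,v_p(\Delta)}\,\log p \;\leq\; K\log|\Delta| \;=\; K\sigma\log N,$$
and apply a Jensen/entropy estimate on the $\log p$ weights (which aggregate to $\log N$) to show that the number of feasible profiles is at most $C''^\sigma$. Combining the two regimes then yields $\#|E_{A,B}(\Z)| \ll C^r + C''^\sigma \cdot C'^r \ll O(1)^{r+\sigma}$, as desired.
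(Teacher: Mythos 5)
This statement is quoted by the paper from Hindry--Silverman and is not proved in the text, so the relevant comparison is with their argument, whose engine is a lower bound for the canonical height: for every nontorsion $P\in E_{A,B}(\Q)$ one has $\hat{h}(P)\gg O(1)^{-\sigma}\log{|\Delta_{A,B}|}$ (Lang's height conjecture for curves of bounded Szpiro ratio). That bound is what confines the integral points of height $\leq K\log{|\Delta|}$ to $O(\sigma)$ multiplicative height ranges (equivalently, gives a packing argument with a usable minimal gap), each range contributing $O(1)^{\rank}$ by the Mumford-type repulsion, while the points of large height are handled by a quantitative Siegel/Roth count (finitely many ``large'' approximations per coset of $E(\Q)/mE(\Q)$), exactly as in the paper's treatment of the sets $\III$ and $\IV$. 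Your proposal contains no lower bound on the height of nontorsion points, and the two devices you substitute for it do not work. First, the entropy count of reduction profiles is false as stated: take $\Delta$ with split multiplicative reduction of type $I_2$ at the first $k$ primes $p_1,\dots,p_k$ and multiplicative reduction at one huge prime $q$, so $\log{|\Delta|}=2\sum_i\log{p_i}+\log{q}$ and $\log{N}=\sum_i\log{p_i}+\log{q}$, hence $\sigma\leq 2$ (and $\sigma\to 1$ as $\log{q}$ grows); each $\Phi_{p_i}\cong\Z/2$, the weight of a nontrivial component is $\tfrac{1}{4}\log{p_i}$, and every one of the $2^k$ profiles satisfies your displayed constraint because the total possible cost $\tfrac14\sum_i\log{p_i}$ is far below $K\log{|\Delta|}$. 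So the number of ``feasible'' profiles is not $O(1)^{\sigma}$, and your final bound degenerates to $2^{k}\cdot O(1)^{r}$ with $k$ unbounded for bounded $\sigma$.

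Second, even within a fixed profile nothing in your argument bounds the small-height points by $C'^{r}$: the error term $O\bigl(\log{|\Delta|}/\min(\hat{h}(P),\hat{h}(Q))\bigr)$ in your gap principle swamps the main term precisely in the regime $\hat{h}\ll\log{|\Delta|}$, and lying in a coset of a finite-index sublattice gives no count without a minimal-height estimate --- there could a priori be very many lattice points of tiny height at tiny mutual angles, and only the Hindry--Silverman height bound rules this out with a constant exponential in $\sigma$. Third, the large-height regime is also incomplete as written: the correct Mumford-type bound for distinct integral points is $\cos{\theta_{P,Q}}\leq \tfrac12\sqrt{\hat{h}(P)/\hat{h}(Q)}+O\bigl(\log{|\Delta|}/\sqrt{\hat{h}(P)\hat{h}(Q)}\bigr)$ for $\hat{h}(P)\geq\hat{h}(Q)$, so the uniform $\tfrac34$ you assert holds only for pairs of comparable height; a single application of Kabatiansky--Levenshtein therefore does not bound all points with $\hat{h}\geq K\log{|\Delta|}$, since the number of dyadic scales up to Baker's bound is not $O(1)$. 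One must add the Diophantine-approximation step (geometric growth within narrow cones plus a Roth/Siegel ``few large approximations'' lemma per coset of $E(\Q)/mE(\Q)$), which your sketch omits entirely. In short, the proposal is missing the central idea of Hindry--Silverman --- the $\sigma$-dependent lower bound on $\hat{h}$ --- and the profile/entropy replacement provably cannot fill that hole.
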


Conjecturally the Szpiro ratio is at most $6+o(1)$. This is equivalent to the ABC conjecture. In any case, the implied constants in both theorems are on the order of $10^{10}$, even if one uses recent improvements to the arguments in Hindry-Silverman (namely, Petsche's \cite{petsche} improved lower bound on the canonical height of a nontorsion rational point on $E_{A,B}$), one cannot reduce the constants to below this order of magnitude. On the other hand it is quite easy to show that most curves have Szpiro ratio at most, say, $100$, so one might think that this makes the second bound amenable to averaging.

But finiteness of the average of $(10^{10})^{\rank(E_{A,B})}$ is far out of the reach of current techniques.\footnote{Heath-Brown \cite{heathbrownsuperexponentialdecay} has proved, assuming the Grand Riemann Hypothesis and the Birch and Swinnerton-Dyer conjecture, that the proportion of curves with rank $R$ is $\ll R^{-\Omega(R)}$, whence we may average $(10^{10})^{\rank(E_{A,B})}$. Thus our result follows from combining this theorem of Heath-Brown with the work of Hindry-Silverman for curves of nonnegligible conductor, and the pointwise bound of Helfgott-Venkatesh (stated below) for those curves of negligible conductor.} Recent spectacular results of Bhargava-Shankar (which will feature centrally in this argument) have proven that the average of $5^{\rank(E_{A,B})}$ is finite (it is at most $6$), and this is the extent of current techniques. Specifically, Bhargava-Shankar have shown:
\begin{thm}[Bhargava-Shankar, \cite{bhargavashankartwo, bhargavashankarthree, bhargavashankarfour, bhargavashankarfive}]
Let $n = 2, 3, 4$, or $5$. Then when all elliptic curves $E/\Q$ are ordered by height, the average size of the $n$-Selmer group $\Sel_n(E)$ is $\sigma(n)$, the sum of divisors of $n$.
\end{thm}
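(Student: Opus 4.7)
The plan is to execute the orbit-counting strategy pioneered by Bhargava. For each $n\in\{2,3,4,5\}$, I would first identify a pair $(G_n,V_n)$ consisting of a reductive group acting on a representation, together with a map from the set of $G_n(\Z)$-orbits on $V_n(\Z)$ onto the collection of Selmer elements $\Sel_n(E)$ as $E$ varies, the curve $E$ being recovered from the two-dimensional invariant ring $V_n/\!/G_n\simeq \mathbb{A}^2$. The classical choices are binary quartic forms under $\mathrm{GL}_2$ (for $n=2$), ternary cubic forms under $\mathrm{GL}_3$ (for $n=3$), pairs of ternary quadratic forms under $\mathrm{GL}_2\times\mathrm{GL}_4$ (for $n=4$), and quintuples of $5\times 5$ skew-symmetric matrices under $\mathrm{GL}_4\times\mathrm{GL}_5$ (for $n=5$). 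Constructing the rational-orbit maps into $H^1(\Q,E[n])$ and verifying that every Selmer element is hit proceeds by classical invariant theory and descent (for $n=2$, e.g., writing the $2$-covering $y^2=f(x)$ as a twist of the universal $2$-covering and extracting its invariants).

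With the parametrization in place, the theorem reduces to a weighted count of $G_n(\Z)$-orbits on $V_n(\Z)$ whose invariants give an elliptic curve of height at most $T$, divided by the number $\asymp T^{5/6}$ of such curves. I would carry out the orbit count by Bhargava's averaging technique over a Siegel-type fundamental domain $\mathcal{F}\subset G_n(\R)$: averaging the characteristic function of $g\cdot\Omega$ over $g$ ranging in a bounded piece of $\mathcal{F}$ and then unfolding turns the count into an archimedean volume computation against a Jacobian factor in the invariants. The finite-place Selmer conditions would then be imposed by a sieve, where the $p$-adic masses are computed place-by-place using local Tate duality and the $n$-torsion structure of $E/\Q_p$; these local factors multiply cleanly to produce the expected answer $\sigma(n)$, with the trivial Selmer class contributing $1$ and the nontrivial classes contributing the remaining $\sigma(n)-1$.

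The hardest step, and the reason a separate paper is required for each $n$, is control of the cusp of the fundamental domain. Integer points in the cuspidal region correspond to reducible orbits (those having a distinguished rational subrepresentation or rational factor of the invariant form), and naively these can outnumber the main term. For each representation one must identify the reducibility loci geometrically, bound their contribution by an explicit geometry-of-numbers argument in lower-dimensional slices, and show that any remaining contribution to $\Sel_n$ is negligible on average --- this analysis grows markedly more intricate as $n$ grows, the $n=5$ case on the $50$-dimensional space of skew-symmetric quintuples being the most delicate. Once the cusp is controlled, the volume main term produces the clean answer $\sigma(n)$ as a product of a simple archimedean integral with the local masses, uniformity of the sieve following from a tail estimate on the density of orbits at each prime with large $p$-adic content.
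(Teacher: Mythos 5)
The paper does not prove this statement at all --- it is quoted verbatim as an external input, with the proof residing in the four cited papers of Bhargava--Shankar; so there is no internal argument to compare yours against. Judged on its own terms, your proposal is a faithful roadmap of the actual Bhargava--Shankar strategy (orbit parametrization of $n$-Selmer elements by integral orbits of a coregular representation, averaging over a fundamental domain, cutting off the cusp where reducible orbits concentrate, a sieve with local mass computations producing $1 + (\sigma(n)-1)$), but two of your representation-theoretic identifications are off: for $n=4$ the relevant space is pairs of \emph{quaternary} (not ternary) quadratic forms, i.e.\ $\Z^2\otimes \Sym^2(\Z^4)$ under (an appropriate quotient of) $\GL_2\times\GL_4$, and for $n=5$ the $50$-dimensional space $\Z^5\otimes\wedge^2\Z^5$ of quintuples of $5\times 5$ skew-symmetric matrices carries an action of $\GL_5\times\GL_5$, not $\GL_4\times\GL_5$. (Also note the normalization mismatch: with this paper's height the number of curves of height at most $T$ is $\asymp T^5$, whereas your $T^{5/6}$ refers to Bhargava--Shankar's convention; harmless, but worth keeping straight.)

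Beyond those slips, be aware that what you have written is a program rather than a proof: the steps you label as ``classical invariant theory and descent,'' the cusp estimates, and especially the uniformity estimates needed to make the sieve converge (controlling orbits whose invariants have large squarefull part, which for $n=4,5$ is a substantial piece of work) are exactly the content that occupies the cited papers. So your proposal correctly identifies the architecture of the known proof and where the difficulty lives, but it neither supplies those ingredients nor offers an alternative route to them.
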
\noindent
Note that $n^{\rank(E)}\leq \#|\Sel_n(E)|$ via Galois cohomology, whence the average of $n^{\rank(E)}$ is at most $\sigma(n)$ for $n\leq 5$.

Another result crucial to us is the pointwise bound of Helfgott-Venkatesh, who obtain:
\begin{thm}[Helfgott-Venkatesh, \cite{helfgottvenkatesh}]
$$\#|E_{A,B}(\Z)|\ll O(1)^{\omega(\Delta)}\cdot (\log{|\Delta|})^2\cdot 1.34^{\rank(E_{A,B})}.$$
\end{thm}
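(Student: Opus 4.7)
The plan is to push the integral points into the Mordell-Weil lattice $V := (E_{A,B}(\Q)/E_{A,B}(\Q)_{\mathrm{tors}}) \otimes \R \cong \R^r$ (where $r = \rank(E_{A,B})$) equipped with the N\'eron-Tate form, and to combine a Silverman--Mumford angular repulsion with sphere-packing bounds in $\R^r$. The factor $1.34^r$ will come from a Kabatyanskii-Levenshtein estimate for spherical codes of a prescribed angular separation; $O(1)^{\omega(\Delta)}$ will absorb both the local contributions at bad primes and the count of integral points of very small canonical height; and $(\log|\Delta|)^2$ will arise from Mumford-type iteration within each angular cone.

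The first step is to control the N\'eron local heights at integral points: decomposing $\hat h(P) = \hat\lambda_\infty(P) + \sum_{p \mid \Delta} \hat\lambda_p(P)$, the only contributions from finite places for $P \in E(\Z)$ come from primes of bad reduction, and the Kodaira classification bounds each $|\hat\lambda_p(P)|$ by an absolute constant times $\log p$. Hence $\hat h(P) = \hat\lambda_\infty(P) + O(\log|\Delta|)$, and polarizing gives the same bound for the N\'eron-Tate inner product of two integral points. Consequently, once $\hat h(P), \hat h(Q) \gg \log|\Delta|$, the N\'eron-Tate angle between $P$ and $Q$ in $V$ is well-approximated by the archimedean angle, which is in turn readable off the complex uniformization $E_{A,B}(\C) \cong \C/\Lambda_{E_{A,B}}$.

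The heart of the argument is a quantitative gap principle: distinct integral points $P, Q$ with $\hat h(P), \hat h(Q) \geq C\log|\Delta|$ satisfy $\cos\theta_{P,Q} \leq \alpha$ for some absolute $\alpha < 1$. The intuition is that $P$ and $Q$ clustering in the archimedean N\'eron embedding would force $|x(P) - x(Q)|_\infty$ small, contradicting integrality $x(P), x(Q)\in \Z$ (unless $P=\pm Q$) once the heights are large enough; one makes this precise via the local-height expansion on a fundamental domain. The unit vectors in $V$ associated to such integral points then form a spherical code of minimum angle $\arccos \alpha$, which is bounded by $K(\alpha)^r$ via Kabatyanskii-Levenshtein, and optimizing $\alpha$ against the provable gap yields the constant $1.34$. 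Inside each such cone, an iterated Mumford-style gap --- ordering the points by height and comparing successive ones --- further bounds the number of integral points in the cone by $O((\log|\Delta|)^2)$, using the na\"ive upper bound $\hat h(P) = O((\log|\Delta|)^{O(1)})$ available from e.g.\ the Hindry--Silverman side of the argument.

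Finally, the small-height integral points (those with $\hat h(P) \ll \log|\Delta|$) are handled separately: Hindry--Silverman/Petsche-type lower bounds on the canonical height control the number of lattice points in a small ball in $V$, and this count, together with the bad-prime N\'eron corrections, is absorbed into the $O(1)^{\omega(\Delta)}$ factor. Multiplying $(\log|\Delta|)^2$ (per cone) by $1.34^r$ (number of cones) and by $O(1)^{\omega(\Delta)}$ yields the claimed bound. The main obstacle is the balance in the third paragraph: one needs $\alpha$ small enough that Kabatyanskii-Levenshtein delivers $1.34^r$, yet large enough that the archimedean gap principle is provable uniformly in $E_{A,B}$, and all of this while propagating the $\omega(\Delta)$ and $\log|\Delta|$ error terms without loss --- it is precisely the delicacy of this optimization that prevents the constant from being pushed much below $1.34$.
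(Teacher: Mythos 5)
This statement is quoted from Helfgott--Venkatesh \cite{helfgottvenkatesh} and is not proved in the paper, so your sketch can only be measured against what a complete argument needs; the paper's own machinery (Lemmas \ref{weil vs canonical}--\ref{lower bound on angle} together with the $\III$/$\IV$ analysis) is a useful checklist. Your skeleton --- Mumford-type repulsion in the Mordell--Weil lattice plus Kabatiansky--Levenshtein --- is indeed the right one, but two steps fail as written. The most serious concerns the source of the $(\log|\Delta|)^2$: you invoke ``the na\"ive upper bound $\hat h(P) = O((\log|\Delta|)^{O(1)})$ available from e.g.\ the Hindry--Silverman side of the argument.'' No such bound exists: Hindry--Silverman \cite{hindrysilverman} bounds the \emph{number} of integral points, not their heights, and the only unconditional height bound is Baker's $\log|x(P)|\leq (10^7H)^{10^7}$, which is polynomial in $|\Delta|$, not polylogarithmic. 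Relatedly, the claim that the unit vectors attached to \emph{all} large integral points form a single spherical code of minimum angle $\arccos\alpha$ is not what the gap principle gives: the inequality $\cos\theta_{P,R}\leq \frac{1}{2}\max\bigl(\sqrt{h(P)/h(R)},\sqrt{h(R)/h(P)}\bigr)+o(1)$ separates only points of \emph{comparable} height, so one must slice by multiplicative height windows and pay the number of windows --- and bounding that number is exactly where either an a priori height bound (Baker would give $O(\log|\Delta|)$ windows, but you never invoke it) or, as in Helfgott--Venkatesh and in this paper, a separate Diophantine-approximation step (a Roth-lemma argument applied to $P=mQ+R_{\vec{a}}$ in cosets of $E(\Q)/mE(\Q)$, cf.\ the treatment of $\III_D^{(i,\vec{a},\tilde{R})}$ and $\IV_D^{(i,\vec{a})}$) is indispensable. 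As written, your per-cone iteration has no termination point of size $(\log|\Delta|)^{O(1)}$, so the $(\log|\Delta|)^2$ factor is unjustified.

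Second, the bookkeeping that is supposed to produce $O(1)^{\omega(\Delta)}$ is missing. The per-prime claim that the Kodaira classification forces $|\hat\lambda_p(P)|\ll \log p$ with an absolute constant is false: at a bad prime the correction $\hat\lambda_p-\lambda_p$ depends on which component of the N\'eron special fibre $P$ meets and can be as large as a constant times $v_p(\Delta)\log p$ (the paper only uses $0\leq\hat\lambda_p-\lambda_p\leq-\frac{1}{6}\log|\Delta|_p$). The aggregate error $O(\log|\Delta|)$ is true, but it is of the same order as the heights at which the gap principle must already operate, and ``absorbing'' it into $O(1)^{\omega(\Delta)}$ is not a mechanism: in Helfgott--Venkatesh the factor arises by partitioning the integral points into $O(1)^{\omega(\Delta)}$ classes according to coarsened component data at the bad primes so that the nonarchimedean contributions nearly cancel within a class, and nothing in your sketch plays this role (this paper instead sidesteps the issue by restricting to a density-one family where the non-squarefree part of $\Delta$ is at most $T^{4\delta}$ and all integral points have $h>(5-\delta)\log T$). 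Finally, counting the small points by packing a ball of radius $\asymp\sqrt{\log|\Delta|}$ against a Petsche/Hindry--Silverman lower bound for $\hat h$ yields a factor of the shape $(O(\sigma))^{O(r)}$ with $\sigma$ the Szpiro ratio, which unconditionally is bounded neither by $O(1)^{\omega(\Delta)}$ nor by $1.34^{r}$ ($\sigma$ can be large while $\omega(\Delta)=1$); the small-height count requires a different, elementary argument.
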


From this it follows that (see Lemma \ref{using the pointwise bound}):
\begin{cor}\label{literature bound}
$$\Avg_{E\in \mathcal{F}_{\mathrm{universal}}^{\leq T}}(\#|E(\Z)|)\ll_\eps T^\eps.$$
\end{cor}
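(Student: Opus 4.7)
The plan is to combine the pointwise bound of Helfgott-Venkatesh with the Bhargava-Shankar average bound on $5^{\rank(E)}$, and absorb all the $\omega(\Delta)$-dependent and $\log|\Delta|$-dependent factors into a negligible $T^\eps$ loss.

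First, I would observe that for any $E\in \mathcal{F}_{\mathrm{universal}}^{\leq T}$ we have $|A|\ll T^2$, $|B|\ll T^3$, hence $|\Delta_{A,B}|\ll T^6$. In particular $(\log|\Delta|)^2 \ll (\log T)^2$ and (by the crude bound $\omega(n)\ll \log n/\log\log n$) we get $C^{\omega(\Delta)}\ll_C T^{O(1/\log\log T)}$ for any absolute constant $C$. Both factors are therefore $\ll_\eps T^\eps$ uniformly on the family.

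Next, by Helfgott-Venkatesh, for every $E\in \mathcal{F}_{\mathrm{universal}}^{\leq T}$,
\[
\#|E(\Z)|\;\ll\;O(1)^{\omega(\Delta)}\cdot(\log|\Delta|)^2\cdot 1.34^{\rank(E)}\;\ll_\eps\; T^\eps\cdot 1.34^{\rank(E)}.
\]
Since $1.34\leq 5$ and $\rank(E)\geq 0$, we have the trivial pointwise inequality $1.34^{\rank(E)}\leq 5^{\rank(E)}$. Averaging over $\mathcal{F}_{\mathrm{universal}}^{\leq T}$ and pulling the uniform $T^\eps$ out of the sum, we obtain
\[
\Avg_{E\in \mathcal{F}_{\mathrm{universal}}^{\leq T}}\bigl(\#|E(\Z)|\bigr)\;\ll_\eps\; T^\eps\cdot \Avg_{E\in \mathcal{F}_{\mathrm{universal}}^{\leq T}}\bigl(5^{\rank(E)}\bigr).
\]

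Finally, by the Bhargava-Shankar theorem above ($n=5$) together with $5^{\rank(E)}\leq \#|\Sel_5(E)|$, the right-hand average is bounded by $\sigma(5) = 6$ in the limit $T\to\infty$, so $\limsup_T$ of the full expression is $\ll_\eps T^\eps$, as desired. There is no real obstacle in this corollary: the only thing one has to check carefully is that the product of the two ``arithmetic'' nuisances $O(1)^{\omega(\Delta)}$ and $(\log|\Delta|)^2$ is genuinely of size $T^{o(1)}$ \emph{uniformly}, not merely on average, and this follows from the elementary bound $\omega(n)\ll \log n/\log\log n$ applied with $n=\Delta\ll T^6$.
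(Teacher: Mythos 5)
Your argument is correct and is essentially the paper's own: the paper deduces this corollary from Lemma \ref{using the pointwise bound}, which likewise combines the Helfgott--Venkatesh pointwise bound with the uniform estimates $\log|\Delta|\ll\log T$ and $\omega(\Delta)\ll \log T/\log\log T$ (so $O(1)^{\omega(\Delta)} = T^{o(1)}$), and then controls the rank factor on average via Bhargava--Shankar. The only difference is cosmetic: the paper proves a slightly stronger statement (a $1.0001$-moment bound of size $\exp(O(\log T/\log\log T))$, needed later for the H\"older restriction-to-subfamily step), whereas you bound the first moment directly, replacing $1.34^{\rank(E)}$ by $5^{\rank(E)}$.
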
\noindent
To the author's knowledge, except for a potentially small improvement (e.g.\ $\exp\left(O\left(\frac{\log{T}}{\log\log{T}}\right)\right)$ instead of $T^\eps$), this is the best result derivable directly from the literature in this direction.\footnote{There has been extensive work by Heath-Brown \cite{heathbrowncurvesandsurfaces}, Bombieri-Pila \cite{bombieripila}, and others on bounding the number of rational points of small height, but this does not improve the above bound.} This sort of result will allow us to restrict our attention to subfamilies of density $1 - T^{-\Omega(1)}$, which will be quite useful in what follows.

\subsection{Detailed sketch of proof of boundedness}

Let us now give an argument proving Theorem \ref{constant theorem} without an explicit constant. (To lower the constant to $66$ we will have to be much more careful.)
\begin{proof}[Sketch of proof that $\limsup_{T\to \infty}\Avg_{E\in \mathcal{F}_{\mathrm{universal}}^{\leq T}}(\#|E(\Z)|) < \infty$.]
The first thing to note is that the size of $\mathcal{F}_{\mathrm{universal}}^{\leq T}$ is $\asymp T^5$. (Indeed, the bound $H(E_{A,B})\ll T$ is equivalent to the bounds $A\ll T^2$ and $B\ll T^3$.)

By Corollary \ref{literature bound}, we may restrict to any subfamily of density at least $1 - T^{-\Omega(1)}$.\footnote{See Lemma \ref{using the pointwise bound}.} Fix a $\delta > 0$. We will restrict to the subfamily $\mathcal{F}_*\subseteq \mathcal{F}_{\mathrm{universal}}$ with:
\begin{itemize}
\item $|A|\gg T^{2-\delta}, |B|\gg T^{3-\delta}$.
\item $(A,B)\leq T^\delta$.
\item $\prod_{v_p(\Delta)\geq 2} p^{v_p(\Delta)}\leq T^\delta$.\footnote{To see that this has the desired density, see Lemma \ref{getting rid of annoying reduction}.}
\end{itemize}

On this subfamily we break the integral points into three classes:
$$E(\Z) = E(\Z)_{\textrm{small}} \cup E(\Z)_{\textrm{medium}} \cup E(\Z)_{\textrm{large}},$$ where:
\begin{align*}
E(\Z)_{\textrm{small}} &:= \{P\in E(\Z) \vert h(P)\leq (5-\delta)\log{T}\},\\
E(\Z)_{\textrm{medium}} &:= \{P\in E(\Z) \vert (5-\delta)\log{T} < h(P)\leq \delta^{-1}\log{T}\},\\
E(\Z)_{\textrm{large}} &:= \{P\in E(\Z) \vert \delta^{-1}\log{T} < h(P)\}.
\end{align*}\noindent
We will call these the ``small'', ``medium'', and ``large'' ranges, respectively.

By explicit counting, we obtain the bound $\sum_{A\ll T^2, B\ll T^3} \#|E_{A,B}(\Z)_{\textrm{small}}|\ll T^{5-\delta}$.\footnote{See the proof of the second part of Lemma \ref{throwing out small points}.} Therefore the small range does not contribute to the average.

To bound the points in the medium range, we prove a gap principle (analogous to the Mumford gap principle for rational points on higher genus curves) which seems to have first appeared in work of Silverman \cite{silvermansiegel} and Helfgott \cite{helfgottsquarefree}.\footnote{The difficulty in proving this in fact lies in handling the error term, which relies in a careful estimation of the difference between the Weil and canonical height on this curve. (This is the reason for restricting to the subfamily $\mathcal{F}_*$: the difference between the two heights is much better controlled in this case.) See Lemma \ref{lower bound on angle}.}
\begin{lem}[Helfgott-Mumford gap principle]
Let $P,R\in E(\Z)_{\textrm{medium}}\cup E(\Z)_{\textrm{large}}$. Let $\theta_{P,R}$ be the angle between them in the Mordell-Weil lattice $E(\Q)/\mathrm{tors}\subseteq E(\Q)\otimes_\Z \R$ (with respect to the canonical height). Then: $$\cos{\theta_{P,R}}\leq \frac{1}{2}\max\left(\sqrt{\frac{h(P)}{h(R)}}, \sqrt{\frac{h(R)}{h(P)}}\right) + O(\delta).$$
\end{lem}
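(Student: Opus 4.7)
My strategy is to reformulate the claim as a lower bound on the canonical height of the difference $P-R$, prove it at the level of Weil heights via the explicit addition formula, and then invoke a refined Weil-versus-canonical height comparison valid on $\mathcal{F}_*$.

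First I apply the parallelogram identity $\hat h(P+R)+\hat h(P-R)=2\hat h(P)+2\hat h(R)$ to write $2\langle P,R\rangle=\hat h(P)+\hat h(R)-\hat h(P-R)$. Assuming WLOG $\hat h(P)\le\hat h(R)$ and bounding $\sqrt{\hat h(P)\hat h(R)}\le\hat h(R)$, the desired inequality becomes
$$\hat h(P-R)\ \ge\ \hat h(P)\ -\ O(\delta\,\hat h(R)),$$
modulo the swap of $\hat h$ for $h$ inside the square root of the statement, which I address in the last paragraph.

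The Weil-height analogue $h(P-R)\ge h(P)-O(1)$ follows from the addition formula
$$x(P-R)=\left(\frac{y_P+y_R}{x_P-x_R}\right)^2-(x_P+x_R).$$
Using the bounds $|A|\ll|x_P|$ and $|B|\ll|x_P|^{3/2}|x_R|^{1/2}$, valid since $|x_P|\ge T^{5-\delta}$ on the medium-to-large range, a case analysis on the relative sizes and signs of the coordinates yields $|x(P-R)|_\infty\asymp|x_P|$. The nontrivial cases are: $|x_P-x_R|$ small with $\sgn(y_P)=\sgn(y_R)$, where the squared term dominates to give $|x(P-R)|\gg|x_P|^3/(x_P-x_R)^2\gg|x_P|$; and $|x_P-x_R|$ small with opposite $y$-signs, handled by Taylor-expanding $y^2=x^3+Ax+B$ around $x_R=x_P$ to obtain $|y_P+y_R|\ll|x_P-x_R|\cdot|x_P|^{1/2}$ and then a short direct computation. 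Since $h(P-R)\ge\log^+|x(P-R)|_\infty$, the Weil-height bound follows.

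The hard part is upgrading this Weil-height estimate to a canonical-height estimate with the sharp error $O(\delta\,\hat h(R))$. The naive Silverman comparison $|\hat h-h|=O(\log|\Delta|)=O(\log T)$ is too weak --- on the medium range it is of the same order as the heights themselves, and would yield only an $O(1)$ error in $\cos\theta_{P,R}$. Decomposing $\hat h-h=\sum_v\psi_v$ into local N\'eron corrections, I need each piece to be $O(\delta\log T)$ on $\mathcal{F}_*$: the additive-reduction piece is bounded by $\log\prod_{v_p(\Delta)\ge 2}p^{v_p(\Delta)}\le\delta\log T$ directly from the defining hypothesis; the multiplicative piece is controlled using $(A,B)\le T^\delta$ together with the fact that integral points reduce to the smooth component of the N\'eron model at multiplicative primes; and the archimedean correction is kept $O(1)$ by the lower bounds $|A|\gg T^{2-\delta}$, $|B|\gg T^{3-\delta}$, which force the periods of $E_{A,B}(\mathbb{R})$ to be of generic size. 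Summing yields $|\hat h(Q)-h(Q)|=O(\delta\log T)=O(\delta\,\hat h(R))$ on our range, and substituting into the reformulation of the first paragraph completes the proof. This refined comparison is the main technical content of the lemma and the reason the $\mathcal{F}_*$ hypotheses take the form they do.
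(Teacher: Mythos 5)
Your reduction via the parallelogram law to a lower bound on $\hat{h}(P-R)$, together with a Weil-height lower bound for $|x(P-R)|_\infty$ from the addition formula, is a legitimate variant of the paper's argument (the paper instead upper-bounds $\hat{h}(P+R)$, see Lemmas \ref{helfgott-mumford gap principle} and \ref{lower bound on angle}). But the step you yourself call the main technical content is justified incorrectly, and the statement you extract from it is false. On $\mathcal{F}_*$ the correct comparison (Lemma \ref{weil vs canonical}) is $\hat{h}(Q)-h(Q)=\log^+\bigl|\Delta^{-\frac{1}{6}}x(Q)\bigr|+\frac{1}{6}\log|\Delta|-\log^+|x(Q)|+O(\delta\log T)$, which is $O(\delta\log T)$ only when $|x(Q)|\gtrsim|\Delta|^{\frac{1}{6}}$; for a rational point with small archimedean $|x(Q)|$ the difference is $\approx\frac{1}{6}\log|\Delta|\approx\log T$. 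Hence your per-place claims cannot all hold: in the normalization where the local heights sum to $\hat{h}$, the multiplicative places contribute in total about $\frac{1}{6}\log|\Delta|$ \emph{independently of the point} (reduction to the smooth component does not remove this, and in any case $Q=P-R$ need not be integral), and this is cancelled only by the archimedean discrepancy, which is roughly $-\min\bigl(\log^+|x(Q)|,\frac{1}{6}\log|\Delta|\bigr)$ and is not $O(1)$ no matter how generic the real periods are. So the uniform bound $|\hat{h}(Q)-h(Q)|=O(\delta\log T)$ that you substitute at the end is unproved as you argue it, and false as stated; this cancellation between finite and archimedean places is exactly what the $\mathcal{F}_*$ conditions are engineered to produce, and it is conditional on $|x(Q)|$ being large.

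The gap is repairable, and in fact your $P-R$ route has a structural advantage you did not exploit: for the lower bound on $\hat{h}(P-R)$ you only need the one-sided inequality $\hat{h}\geq h-O(\delta\log T)$, which holds for \emph{all} rational points on curves in $\mathcal{F}_*$ because the non-error term above is nonnegative; alternatively, your own estimate $|x(P-R)|\gg\min(|x_P|,|x_R|)\geq T^{5-\delta}\geq|\Delta|^{\frac{1}{6}}$ puts $P-R$ in the favorable range where the two-sided comparison does hold. (For the integral points $P,R$ themselves the two-sided comparison is fine since $|x|\geq T^{5-\delta}$.) By contrast the paper's $P+R$ route needs an \emph{upper} bound on $\hat{h}(P+R)$, which is exactly the unfavorable direction when $|x(P+R)|<|\Delta|^{\frac{1}{6}}$, and the paper handles that case separately via the denominator of $x(P+R)$. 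One smaller point: in your same-sign case, "the squared term dominates" is not right, since $\bigl(\frac{y_P+y_R}{x_P-x_R}\bigr)^2$ and $x_P+x_R$ are both $\asymp\max(|x_P|,|x_R|)$ when the heights are far apart; you should instead expand the numerator as $x_P^2x_R+x_Px_R^2+2y_Py_R+A(x_P+x_R)+2B$, where all main terms are positive, giving $|x(P-R)|\gg\min(|x_P|,|x_R|)$ (your claimed $\asymp|x_P|$ fails in the near-collision same-sign case, but only the lower bound is needed).
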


Therefore, via $P\mapsto \frac{1}{\sqrt{\hat{h}(P)}}\otimes P$, the number of points $P\in E(\Z)_{\textrm{medium}}$ with canonical height in the range $[X, (1+\delta)X]$ is $$\ll A(\rank(E),\theta_0),$$ where $\theta_0 = \frac{\pi}{3} - O(\delta)$, and $A(n,\theta)$ is the maximal number of unit vectors in $\R^n$ with pairwise angles at least $\theta$. It is a well-known problem in the theory of sphere packing to provide a good upper bound for this quantity. For our purposes we will be interested in an upper bound for large $n$, and one is provided by the work of Kabatiansky-Levenshtein:
\begin{thm}[Kabatiansky-Levenshtein, \cite{kabatianskylevenshtein}]\label{kablev}
$$A(n,\theta)\ll \exp\left(n\cdot \left[\frac{1+\sin{\theta}}{2\sin{\theta}}\log{\left(\frac{1+\sin{\theta}}{2\sin{\theta}}\right)} - \frac{1-\sin{\theta}}{2\sin{\theta}}\log{\left(\frac{1-\sin{\theta}}{2\sin{\theta}}\right)} + o(1)\right]\right).$$
\end{thm}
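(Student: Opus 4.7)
The plan is to use Delsarte's linear programming method, in the refined form due to Kabatiansky and Levenshtein. The core LP inequality is the following: if $f: [-1,1] \to \mathbb{R}$ admits an expansion $f = \sum_{k \geq 0} f_k G_k^{(n)}$ in the Gegenbauer polynomials attached to $S^{n-1}$ with $f_0 > 0$ and $f_k \geq 0$ for all $k \geq 1$, and additionally $f(t) \leq 0$ for $t \in [-1, \cos\theta]$, then for any spherical code $C \subseteq S^{n-1}$ with pairwise inner products at most $\cos\theta$ one has
$$|C| \leq \frac{f(1)}{f_0}.$$
This is a double-counting argument: Schoenberg's theorem on positive-definite kernels on the sphere gives $\sum_{x,y \in C} f(\langle x,y\rangle) \geq f_0 |C|^2$, while the minimum-angle hypothesis combined with $f \leq 0$ on $[-1, \cos\theta]$ bounds the same sum above by $f(1)|C|$ (only the diagonal contributes positively). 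Thus everything reduces to exhibiting a good test function $f$.

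The Kabatiansky-Levenshtein improvement is to transfer the problem to real projective space $\mathbb{RP}^{n-1}$ by passing to the antipodal double $C \cup (-C)$ and applying a stereographic-type reduction that trades dimension for a larger effective angle; the resulting LP bound now uses a Jacobi polynomial expansion $\sum f_k P_k^{(\alpha,\beta)}$ with $\alpha, \beta$ depending linearly on $n$. One then takes the test function
$$f(t) = \frac{(t - t_0)\bigl(P_k^{(\alpha,\beta)}(t)\bigr)^2}{(1 - t_0)\bigl(P_k^{(\alpha,\beta)}(1)\bigr)^2},$$
where $t_0$ is chosen to be the largest zero of $P_{k+1}^{(\alpha,\beta)}$ that lies just below $\cos\theta$. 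The Christoffel-Darboux identity guarantees that this $f$ has non-negative Jacobi coefficients, the linear factor $(t - t_0)$ enforces $f(t) \leq 0$ on $[-1, t_0]$, and the normalization makes $f(1) = 1$; the constant term $f_0$ is extracted from the orthogonality integral for Jacobi polynomials.

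The hard part is the asymptotic analysis of $1/f_0$ as $n,k \to \infty$ with $k/n$ tending to a constant and $\alpha, \beta$ scaling linearly in $n$. In this Plancherel-Rotach regime, the Mehler-Heine asymptotics for Jacobi polynomials pin the largest zero $t_0$ to $\cos\theta$ for the right value of $k/n$, while Stirling's formula applied to the explicit product formula for the Jacobi $L^2$-norm converts $f_0$ into an exponential in $n$. Optimizing the parameter $k$ so that $t_0 \to \cos\theta$ and carrying out the saddle-point analysis then produces the entropy-like rate
$$\phi(\theta) = \frac{1+\sin\theta}{2\sin\theta}\log\frac{1+\sin\theta}{2\sin\theta} - \frac{1-\sin\theta}{2\sin\theta}\log\frac{1-\sin\theta}{2\sin\theta},$$
with the $o(1)$ absorbing the subleading contributions from both the Mehler-Heine expansion and Stirling. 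The main obstacle is really this asymptotic step: the LP setup is essentially formal, but extracting the sharp constant requires keeping track of polynomially many Jacobi-polynomial zeros with exponential precision.
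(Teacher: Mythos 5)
You should first note that the paper does not prove this statement at all: Theorem \ref{kablev} is quoted as a black box from Kabatiansky--Levenshtein, and in the explicit optimization the paper even sidesteps its asymptotic nature (using Cohn's linear-programming bounds for small rank and a crude volume bound for large rank). So there is no internal proof to compare against; your proposal has to be judged against the original argument, and in outline it does follow that argument: the Delsarte LP inequality $|C|\leq f(1)/f_0$, Levenshtein-type test polynomials built from Jacobi polynomials, and Plancherel--Rotach-type asymptotics for the extreme zeros, which is exactly where the entropy function $\phi(\theta)$ comes from.

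Two specific steps, however, are not correct as written. First, the reduction to $\RP^{n-1}$ via the ``antipodal double $C\cup(-C)$'' fails: a code of minimal angle $\theta$ may contain nearly antipodal pairs, and then $x$ and $-y$ are at angle smaller than $\theta$, so the doubled set is not a projective code with parameter $\cos\theta$. The genuine Kabatiansky--Levenshtein reduction restricts the code to a spherical cap (chosen by averaging so as to lose only a subexponential factor), observes that inside a suitable cap all pairwise angles lie in $[\theta,\pi-\theta]$, and then projects the cap onto a lower-dimensional sphere, trading dimension for an enlarged effective angle; this is the step you gesture at with ``stereographic-type reduction'' but it must replace, not supplement, the antipodal doubling. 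Second, the positivity of the expansion coefficients of $f(t)=(t-t_0)\bigl(P_k^{(\alpha,\beta)}(t)\bigr)^2$ is not a consequence of the Christoffel--Darboux identity: multiplying $P_k^2$ by the linear factor $t-t_0$ mixes in the middle recurrence coefficients and can produce negative terms. The actual test function is $(t-s)$ times the \emph{square of the Christoffel--Darboux kernel} $K_k(s,t)$, and nonnegativity of its coefficients uses Christoffel--Darboux together with the Krein/linearization property (products of these Jacobi polynomials have nonnegative expansions). Finally, the asymptotic step you correctly identify as the hard part is only named, not carried out; since the entire content of the theorem is that exponential rate, a complete proof would need the extreme-zero asymptotics and the Stirling analysis made precise, or else one should simply cite \cite{kabatianskylevenshtein} as the paper does.
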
\noindent
For $\theta_0 = \frac{\pi}{3} - O(\delta)$, this tells us that $$A(n,\theta_0)\ll 1.33^n$$ once $\delta\ll 1$.

Therefore the number of integral points with canonical height in the interval $[X,(1+\delta)X]$ is $\ll 1.33^{\rank(E)}$. Since we can cover $E(\Z)_{\textrm{medium}}$ with $O(\delta^{-1})$ such intervals, we obtain the bound $$\#|E(\Z)_{\textrm{medium}}|\ll \delta^{-1}\cdot 1.33^{\rank(E)}.$$ Since, by Bhargava-Shankar, the average of $2^{\rank(E)}$ is bounded over this family, the medium range contributes $O(\delta^{-1})$ to the average.

Finally, to the large range. The claim is that there are $O(\delta^{-1}\log{(\delta^{-1})}\cdot 1.33^{\rank(E)})$ many points of $E(\Z)_{\textrm{large}}$ in each coset of $E(\Q)/3E(\Q)$. To see this, let $R$ be a minimal element (with respect to height) of $E(\Z)_{\textrm{large}}$ in its coset modulo $3$. By the same argument as for the medium range, there are $O(\delta^{-1}\log{(\delta^{-1})}\cdot 1.33^{\rank(E)})$ integral points $P$ with $h(P) < \delta^{-1}h(R)$. For those points $P\equiv R\pmod{3}$ with $h(P)\geq \delta^{-1}h(R)$, we write $P =: 3Q + R$ with $Q\in E(\Q)$. Then since $P$ is very close to $\infty$ in the Archimedean topology, $Q$ must be very close to a solution of $3\tilde{R} = -R$ as well. That is, $x(Q)$ must be very close to an $x(\tilde{R})\in \Qbar$ solving $x(3\tilde{R}) = x(R)$. After making this precise\footnote{See \eqref{end of calculation of kappa} and take $C, D\gg \delta^{-1}$.}, we find that:
$$\frac{9}{2} - O(\delta)\geq \frac{\log{|x(Q) - x(\tilde{R})|^{-1}}}{h(Q)}$$ for some such $\tilde{R}$. Therefore $$|x(Q) - x(\tilde{R})|\leq H(Q)^{-\frac{9}{2} + O(\delta)}.$$ Thus $x(Q)$ is a Roth-type approximation to $x(\tilde{R})$. Moreover since $h(x(Q)) = h(Q)\gg \delta^{-1} h(\tilde{R}) = \delta^{-1} h(x(\tilde{R}))$, we see that $x(Q)$ is a ``large'' rational approximation, in the sense of Bombieri-Gubler \cite{bombierigubler}. As they prove\footnote{See e.g.\ their (6.23).}, there are only $O(1)$ such approximations once $\delta^{-1}\gg 1$. Therefore each coset modulo $3$ contributes at most $O(\delta^{-1}\log{(\delta^{-1})}\cdot 1.33^{\rank(E)})$ to $\#|E(\Z)_{\textrm{large}}|$, whence we obtain the bound $$\#|E(\Z)_{\textrm{large}}|\ll \delta^{-1}\log{(\delta^{-1})}\cdot 3.99^{\rank(E)}.$$ Again by Bhargava-Shankar the average of $4^{\rank(E)}$ is bounded, so that the large range contributes $O(\delta^{-1}\log{(\delta^{-1})})$ to the average.

Therefore, in sum, we have found that the average is at most $O(\delta^{-1}\log{(\delta^{-1})})$ for any $\delta\ll 1$ sufficiently small. Choosing such a $\delta\asymp 1$ then gives the result.
\end{proof}

Having given a sketch of an argument proving the weaker theorem that the limsup of the average is bounded, let us now give the full proof of Theorem \ref{constant theorem}.

\section{Proof of Theorem \ref{constant theorem}}\label{constant theorem proof}

We will follow the structure of the argument given in the previous section reasonably closely, deviating only in the specific details of the application of sphere-packing bounds (for numerical reasons), and in being entirely explicit. We work only with the average (i.e., $k=1$) --- there are only a few modifications required for the case of general $k$, and they are all clear.

\begin{proof}[Proof of Theorem \ref{constant theorem}.]
As noted, $$\#|\mathcal{F}_{\mathrm{universal}}^{\leq T}|\asymp T^5.$$

To obtain a good estimate on the difference between the Weil height and the canonical height, we will restrict to a subfamily $\mathcal{F}_*\subseteq \mathcal{F}_{\mathrm{universal}}^{\leq T}$ which omits a set of density $O(T^{-c})$ for some positive $c > 0$. The following lemma shows that we may do this.

\subsection{Restricting to a subfamily and handling small points}

\begin{lem}\label{using the pointwise bound}
Let $\mathcal{G}\subseteq \mathcal{F}_{\mathrm{universal}}^{\leq T}$. Then, for all $\eps > 0$, $$\sum_{E\in \mathcal{G}} \#|E(\Z)|\ll \#|\mathcal{F}_{\mathrm{universal}}^{\leq T}|\cdot \left(\frac{\#|\mathcal{G}|}{\#|\mathcal{F}_{\mathrm{universal}}^{\leq T}|}\right)^{\Omega(1)}\cdot \exp\left(O\left(\frac{\log{T}}{\log\log{T}}\right)\right).$$
\end{lem}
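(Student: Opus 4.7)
The plan is to apply the Helfgott-Venkatesh pointwise bound term by term and then sum the resulting rank contribution via H\"older against the Bhargava-Shankar $5$-Selmer average. Specifically, for every $E = E_{A,B} \in \mathcal{F}_{\mathrm{universal}}^{\leq T}$ we have $|A|\ll T^2$ and $|B|\ll T^3$, so $|\Delta_{A,B}|\ll T^6$. Consequently $\log|\Delta|\ll \log T$ and the standard uniform bound $\omega(n)\ll \log n/\log\log n$ gives $\omega(\Delta)\ll \log T/\log\log T$. Feeding these into the Helfgott-Venkatesh bound
$$\#|E(\Z)|\ll O(1)^{\omega(\Delta)}(\log|\Delta|)^2\cdot 1.34^{\rank(E)}$$
collapses the $\Delta$-dependent prefactor into a single factor of $\exp(O(\log T/\log\log T))$, yielding the pointwise estimate
$$\#|E(\Z)|\ll \exp\!\left(O\!\left(\tfrac{\log T}{\log\log T}\right)\right)\cdot 1.34^{\rank(E)}$$
uniformly on $\mathcal{F}_{\mathrm{universal}}^{\leq T}$.

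Summing over $\mathcal{G}$, the problem reduces to bounding $\sum_{E\in\mathcal{G}} 1.34^{\rank(E)}$. I would fix some $p$ strictly between $1$ and $\log 5/\log 1.34$ (so that $1.34^p < 5$) and apply H\"older:
$$\sum_{E\in\mathcal{G}} 1.34^{\rank(E)} \leq |\mathcal{G}|^{1-1/p}\left(\sum_{E\in\mathcal{G}} 1.34^{p\,\rank(E)}\right)^{1/p} \leq |\mathcal{G}|^{1-1/p}\left(\sum_{E\in\mathcal{F}_{\mathrm{universal}}^{\leq T}} 5^{\rank(E)}\right)^{1/p}.$$
By the Galois cohomology inequality $5^{\rank(E)}\leq \#|\Sel_5(E)|$ together with the Bhargava-Shankar theorem bounding the average of $\#|\Sel_5(E)|$ by $6$, the inner sum is $\ll \#|\mathcal{F}_{\mathrm{universal}}^{\leq T}|$. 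Thus
$$\sum_{E\in\mathcal{G}} 1.34^{\rank(E)} \ll \#|\mathcal{F}_{\mathrm{universal}}^{\leq T}|\cdot\left(\frac{\#|\mathcal{G}|}{\#|\mathcal{F}_{\mathrm{universal}}^{\leq T}|}\right)^{1-1/p}.$$

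Combining the two displays with $\Omega(1) := 1-1/p > 0$ yields the claimed inequality. The only non-cosmetic step is the one above: everything else is bookkeeping on sizes. There is no real obstacle, since the Helfgott-Venkatesh and Bhargava-Shankar inputs are already black-boxed in the paper; the only mild care needed is in choosing $p$ so that $1.34^p < 5$ is strict, which guarantees a positive exponent $\Omega(1)$ on the density ratio $\#|\mathcal{G}|/\#|\mathcal{F}_{\mathrm{universal}}^{\leq T}|$.
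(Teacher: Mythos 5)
Your proposal is correct and uses essentially the same argument as the paper: the Helfgott--Venkatesh pointwise bound with $\omega(\Delta)\ll \log T/\log\log T$ and $\log|\Delta|\ll\log T$, then H\"older against the full family, closed off by the Bhargava--Shankar bound on the average of $5^{\rank(E)}$. The only (immaterial) difference is where H\"older is applied --- the paper bounds the $1.0001$-th moment of $\#|E(\Z)|$ itself, while you first absorb the $\Delta$-dependent factors pointwise and then apply H\"older to $1.34^{\rank(E)}$ with an exponent $p$ satisfying $1.34^{p}<5$.
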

\begin{proof}
By H\"{o}lder's inequality, it suffices to show that $$\Avg_{E\in \mathcal{F}_{\mathrm{universal}}^{\leq T}}(\#|E(\Z)|^{1.0001})\ll \exp\left(O\left(\frac{\log{T}}{\log\log{T}}\right)\right).$$ By Helfgott-Venkatesh (Theorem \ref{literature bound}), we have that $$\sum_{E\in \mathcal{F}_{\mathrm{universal}}^{\leq T}} \#|E(\Z)|^{1.0001}\ll (\log{T})^{2.0002}\cdot \sum_{E\in \mathcal{F}_{\mathrm{universal}}^{\leq T}} O(1)^{\omega(\Delta_E)}\cdot 1.35^{\rank(E)}.$$ We apply the crude bound $\omega(n)\ll \frac{\log{n}}{\log\log{n}}$ and Bhargava-Shankar to conclude.
\end{proof}

Fix a $\delta > 0$ to be chosen later. We will take $\delta\asymp 1$ independent of $T$. Let us apply this to first restrict to the subfamily $\mathcal{F}_\bullet\subseteq \FuniversalT$ defined by the conditions:
\begin{enumerate}
\item $|A|\geq T^{2-\delta}$.
\item $|B|\geq T^{3-\delta}$, and $B$ is not a square.
\item $(A,B)\leq T^\delta$.
\item $|\Delta|\geq T^{6-2\delta}$.
\item $\prod_{p^2\vert \Delta} p^{v_p(\Delta)}\leq T^{4\delta}$.
\end{enumerate}

To see that we may, we prove:
\begin{lem}\label{getting rid of annoying reduction}
Let $\mathcal{G}$ be the complement of $\mathcal{F}_\bullet$ in $\FuniversalT$. Then: $$\frac{\#|\mathcal{G}|}{\#|\FuniversalT|}\ll T^{-\Omega(\delta)}.$$
\end{lem}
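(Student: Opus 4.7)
The plan is to treat the five defining conditions of $\mathcal{F}_\bullet$ one at a time and union-bound. Since $\#|\FuniversalT|\asymp T^5$, it suffices to show that the set of $(A,B)$ in the quasiminimal box $[-T^2,T^2]\times[-T^3,T^3]$ failing each condition has size $\ll T^{5-\Omega(\delta)}$.

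Conditions (1)--(3) reduce to elementary box counts. For (1), pairs with $|A| < T^{2-\delta}$ fill a box of size $\ll T^{2-\delta}\cdot T^3 = T^{5-\delta}$; (2) is symmetric, and the set of square $B$ with $|B|\leq T^3$ has size $\ll T^{3/2}$, giving at most $\ll T^{7/2}$ extra pairs. For (3), the set with $d\vert (A,B)$ has size $\ll T^5/d^2$, so summing over $d > T^\delta$ gives $\ll T^{5-\delta}$.

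For (4), set $K := T^{6-2\delta}/16$ and count pairs with $|4A^3+27B^2|\leq K$. Fix $A$. If $|A|^3 \leq K/4$, then $|B|\leq\sqrt{2K/27}$, contributing $\ll K^{5/6}$ over all such $A$. If $|A|^3 > K/4$, then $27B^2$ lies in an interval of length $2K$ around $4|A|^3$, so $|B\mp 2|A|^{3/2}/\sqrt{27}|\ll K/|A|^{3/2}$, giving $\ll 1 + K/|A|^{3/2}$ admissible $B$; summing over $(K/4)^{1/3} < |A|\leq T^2$ contributes $\ll T^2 + K^{5/6}\ll T^{5-5\delta/3}$.

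Condition (5) will be the main obstacle. I will estimate
\[
\#\{(A,B) : \mathrm{sqful}(\Delta_{A,B}) > T^{4\delta}\} \leq \sum_{\substack{m > T^{4\delta}\\ m \text{ powerful}}} \#\{(A,B) : m \vert \Delta_{A,B}\}.
\]
For a powerful $m$ coprime to $6$, for each $A\pmod m$ the equation $27B^2 \equiv -4A^3\pmod m$ has $\ll 2^{\omega(m)}$ solutions in $B\pmod m$ (by CRT and counting square roots modulo prime powers), giving $\ll m\cdot 2^{\omega(m)}$ residue classes, and hence $\ll T^5\cdot 2^{\omega(m)}/m$ lifts in the box (the contribution of $(2\cdot 27, m) > 1$ absorbs into a constant on $\omega(m)$ since these primes contribute $O(1)$). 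The key estimate is the tail bound
\[
\sum_{\substack{m > X\\ m \text{ powerful}}} \frac{2^{\omega(m)}}{m} \ll X^{-1/2+\eps},
\]
which follows by parameterizing powerful integers as $m = a^2 b^3$ with $b$ squarefree and bounding $\sum_{a,b,\, a^2 b^3 > X} 2^{\omega(ab)}/(a^2 b^3)$. Taking $X = T^{4\delta}$ yields $\ll T^{5-2\delta+\eps}$ pairs. The subtle point will be verifying the residue-class count uniformly in $m$ (especially in the powerful, not just squarefree, regime, where solutions may lift non-uniquely through Hensel), and keeping the $2^{\omega(m)}$ factor from wrecking the tail sum; once both are in hand the union bound completes the proof.
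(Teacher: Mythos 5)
Your treatment of conditions (1)--(4) is fine and matches the paper's in spirit (the paper handles (4) by solving for $A$ in terms of $B$, you solve for $B$ in terms of $A$; either works). The gap is in condition (5), which is indeed the heart of the lemma, and the proposal as written does not close it. First, the local count is wrong as stated: for powerful $m$ and a \emph{fixed} residue $A \bmod m$, the congruence $27B^2 \equiv -4A^3 \pmod m$ does not have $\ll 2^{\omega(m)}$ solutions in general --- when $p \mid (A,m)$ and $p^e \,\Vert\, m$ the number of square roots can be as large as about $p^{\min(3v_p(A)/2,\, e/2)}$. This is not a technicality you can absorb into constants: it is exactly why the paper's count carries the factor $(A^{\frac32},n)$ and then needs a Dirichlet-series argument to average that gcd over $A \ll T^2$. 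With the corrected count, the density of $(A,B) \bmod m$ with $m \mid \Delta$ can be as large as roughly $m^{-5/6}$ for very powerful $m$, not $m^{-1+\eps}$, so your tail bound $\sum_{m>X} 2^{\omega(m)}/m \ll X^{-1/2+\eps}$ is not the relevant sum (a tail over powerful $m$ of $m^{-5/6}$ still decays, like $X^{-1/3}$, but that computation, with the gcd average, is the work you have deferred).

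Second, and more fatally, your union bound runs over \emph{all} powerful $m > T^{4\delta}$, with no upper cutoff, while powerful divisors of $\Delta$ can be as large as $|\Delta| \asymp T^6$. The passage from residue classes to lattice points, ``$\ll T^5 \cdot (\text{classes})/m$ lifts in the box,'' is only legitimate when $m \ll T^2$; in general each class mod $m$ contributes $\ll (1+T^2/m)(1+T^3/m)$ box points, and the ``$+1$'' terms, summed over $m$ up to $T^6$, overwhelm $T^{5-\Omega(\delta)}$. The paper's proof is structured precisely to avoid this regime: it first invokes the Helfgott--Venkatesh bound on the number of curves of conductor $\leq N$ to discard all but $\ll T^{4.999}$ curves of conductor below $T^{4.08}$, deduces $\rad(\Delta) \gg T^{4.05}$, and observes that then both $\prod_{p^2\mid\Delta} p$ and $\prod_{p^2\mid\Delta} p^{\lfloor v_p(\Delta)/2\rfloor}$ divide $\Delta/\rad(\Delta) \ll T^{1.95}$, so condition (5) reduces to ruling out square divisors $n^2$ with $T^\delta < n \ll T^{1.99}$ --- a range in which the congruence-plus-box count (with the gcd factor and the Dirichlet series) gives $\ll T^{3.99+\eps} + T^{5-2\delta/3}$. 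Without some substitute for this capping of the modulus (the conductor step, or another idea to handle large powerful divisors of $\Delta$), your approach cannot be completed.
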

\begin{proof}
It suffices to impose each condition one by one and check that we throw out a density $\ll T^{-\Omega(\delta)}$ subset at each step. For the first and second conditions this is immediate. For the third condition, the number of $A\ll T^2, B\ll T^3$ with $(A,B) > T^\delta$ is at most $$\ll \sum_{T^\delta < n\ll T^2} \frac{T^2}{n}\cdot \frac{T^3}{n}\ll T^{5-\delta}.$$ So we may assume the first, second, and third conditions. Given these, for the fourth condition, if $|\Delta| < T^{6-2\delta}$, then
\begin{align*}
A &= \left(-\frac{27}{4}B^2 + O(T^{6-2\delta})\right)^{\frac{1}{3}}
\\&= -\frac{3}{2^{\frac{2}{3}}} B^{\frac{2}{3}} + O\left(\frac{T^{6-2\delta}}{B^{\frac{4}{3}}}\right)
\\&= -\frac{3}{2^{\frac{2}{3}}} B^{\frac{2}{3}} + O\left(T^{2-\frac{\delta}{3}}\right).
\end{align*}
Therefore the number of $A,B$ with $|\Delta_{A,B}| < T^{6-2\delta}$ is $$\ll \sum_{B\ll T^3} T^{2-\frac{\delta}{3}}\ll T^{5-\frac{\delta}{3}}.$$

Finally, for the fifth condition given the other four, the argument will be a bit longer. Our strategy will be to show that we may take the radical of $\Delta$ to be reasonably large, and then we will establish that we may take $\Delta$ to not have any nonnegligible square divisors. Then we may bound the ``nonsquarefree part'' of $\Delta$ in terms of square divisors of $\Delta$ only, which thus forces it to be small.

We first show that we may assume the conductor of $E_{A,B}$ is at least $T^{4.08}$. To see this, by Theorem 4.5 of Helfgott-Venkatesh \cite{helfgottvenkatesh}, the number of curves of conductor $N$ is $\ll N^{0.224}$. Therefore the number of $(A,B)$ with conductor at most $T^{4.08}$ is $\ll T^{1.224\cdot 4.08} < T^{4.999}$, giving the claim.

Now note that $E_{A,B}$ has additive reduction at $p$ if and only if $p\vert (A,B)$. Therefore $$N_{A,B}\ll \left(\prod_{p\neq 2,3, p\vert \Delta} p\right)\cdot \left(\prod_{p\neq 2,3, p\vert (A,B)} p\right)\ll \rad(\Delta)\cdot T^\delta,$$ where $\rad(n) := \prod_{p\vert n} p$ is the radical of $n$. Therefore $\rad(\Delta)\gg T^{4.05}$ once $\delta\ll 1$.

Let us now show that we may assume that if $n^2\vert \Delta$ and $n\ll T^{1.99}$ then $n\leq T^\delta$. To see this, note that $n^2\vert \Delta$ implies that $-64A^3\equiv 432B^2\pmod{n^2}$. The first claim is that, for fixed $A$, the number of $B\ll T^3$ solving this equation modulo $n^2$ is $$\ll O(1)^{\omega(n)}\cdot (A^{\frac{3}{2}},n)\cdot \left(1 + \frac{T^3}{n^2}\right),$$ where $(x^\alpha, y^\beta) := \prod_{p\vert (x,y)} p^{\max(\alpha v_p(x), \beta v_p(y))}$. Indeed, at a prime power $p^e$ with $p > 3$, the number of square roots of $-\frac{4A^3}{27}$ is at most $2p^{\frac{3v_p(A)}{2}}$ by Hensel's lemma. When $p = 3$ it is instead at most $\ll 3^{\frac{3v_3(A)}{2}}$ for the same reason, but the implied constant is different. Similarly at $p = 2$ it is $\ll 2^{\frac{3v_2(A)}{2}}$. Moreover if $3v_p(A)\geq e$, then the number of solutions for $B$ is instead at most $\const\cdot p^{\frac{e}{2}}$, with $\const\ll 1$ and equal to $1$ if $p > 3$. Therefore the number of solutions modulo $m$ is $$\ll O(1)^{\omega(m)}\cdot \left(\prod_{p\vert (A,m)} p^{\min\left(v_p(A),\frac{1}{3}v_p(m)\right)}\right)^{\frac{3}{2}}.$$ Hence the number of $B\ll T^3$ such that $n^2\vert \Delta_{A,B}$ is $$\ll O(1)^{\omega(n)}\cdot (A^{\frac{3}{2}},n)\cdot \left(1 + \frac{T^3}{n^2}\right).$$

But then the number of $A\ll T^2, B\ll T^3$ for which there exists an $n^2\vert \Delta$ with $T^\delta < n\ll T^{1.99}$ is at most
\begin{align*}
\sum_{A\ll T^2}\sum_{B\ll T^3}\sum_{T^\delta < n\ll T^{1.99}, n^2\vert \Delta_{A,B}} 1 &= \sum_{T^\delta < n\ll T^{1.99}}\sum_{A\ll T^2}\#|\{B\ll T^3 : n^2\vert \Delta_{A,B}\}|\\&\ll \sum_{T^\delta < n\ll T^{1.99}} O(1)^{\omega(n)}\left(1 + \frac{T^3}{n^2}\right)\sum_{A\ll T^2} (A^{\frac{3}{2}},n).
\end{align*}\noindent
By examining the residue of the relevant Dirichlet series at $s=1$, one finds that $$\sum_{A\ll T^2} (A^{\frac{3}{2}},n)\ll O(1)^{\omega(n)}\cdot \left(\prod_{p^2\vert n} p^{v_p(n)}\right)^{\frac{1}{3}}\cdot T^2.$$ We will again use the bound $O(1)^{\omega(n)}\ll_\eps n^\eps$ to conclude that our sum is at most
\begin{align*}
&\ll_\eps T^{2+\eps}\cdot \sum_{T^\delta < n\ll T^{1.99}} \left(\prod_{p^2\vert n} p^{\frac{v_p(n)}{3}}\right)\cdot \left(1 + \frac{T^3}{n^2}\right)
\\&\ll T^{3.99+\eps} + T^{5 - \frac{2\delta}{3}},
\end{align*}\noindent
as desired.

Therefore we may assume that the only square divisors $n^2$ of $\Delta$ with $n\ll T^{1.99}$ are smaller than $T^{2\delta}$. But now $\left(\prod_{p^2\vert \Delta} p\right)^2$ and $\left(\prod_{p^2\vert \Delta} p^{\floor{\frac{v_p(\Delta)}{2}}}\right)^2$ are square divisors of $\Delta$. Moreover, $\prod_{p^2\vert \Delta} p$ and $\prod_{p^2\vert \Delta} p^{\floor{\frac{v_p(\Delta)}{2}}}$ divide $\frac{\Delta}{\rad(\Delta)}\ll T^{1.95}$. Therefore these square divisors must both be of size at most $T^{2\delta}$! Hence since $\prod_{p^2\vert \Delta} p^{v_p(\Delta)}$ divides $$\left(\prod_{p^2\vert \Delta} p\right)^2\cdot \left(\prod_{p^2\vert \Delta} p^{\floor{\frac{v_p(\Delta)}{2}}}\right)^2\leq T^{4\delta},$$ we are done.
\end{proof}

We will further restrict to a subfamily of curves with no small integral or rational points. Specifically, let $\mathcal{F}_*\subseteq \mathcal{F}_\bullet$ be the subfamily defined by the conditions:
\begin{enumerate}
\item $E_{A,B}(\Q)_{\mathrm{tors}} = 0$.
\item If $P\in E_{A,B}(\Z)$, then $h(P) > (5-\delta)\log{T}$.
\item If $Q\in E_{A,B}(\Q)$, then $h(Q) > \left(\frac{1}{2}-\delta\right)\log{T}$.
\end{enumerate}

Let us now prove that we may restrict to this subfamily.
\begin{lem}\label{throwing out small points}
Let $\mathcal{G}$ be the complement of $\mathcal{F}_*$ in $\mathcal{F}_\bullet$. Then $$\frac{\#|\mathcal{G}|}{\#|\mathcal{F}_\bullet|}\ll T^{-\Omega(\delta)}.$$
\end{lem}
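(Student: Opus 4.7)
My plan is to handle each of the three conditions defining $\mathcal{F}_*$ inside $\mathcal{F}_\bullet$ separately and bound the number of violators by $\ll T^{5-\Omega(\delta)}$; since $\#|\mathcal{F}_\bullet|\asymp T^5$, a union bound then yields the desired density estimate. The three failure modes are essentially independent, and the main obstacle, detailed below, is the rational-point condition, where some delicate denominator bookkeeping will be needed.

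For the torsion condition, I would invoke Mazur's theorem to reduce to bounding curves carrying a torsion point of each allowed prime order. A rational $2$-torsion point $(r,0)$ must have $r\in \Z$ by the rational root theorem (since $A,B\in \Z$), and then $B = -r^3 - Ar$ is determined by the pair $(r, A)$. The box constraints $|A|\leq T^2,|B|\leq T^3$ force $|r|\ll T$, yielding $\ll T^3$ such pairs. Torsion of higher order cuts out a locus of smaller dimension, and either a direct parameterization or the explicit densities of Harron--Snowden give strictly smaller counts. Hence this condition excludes density $\ll T^{-2}$.

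For the integral-point condition, I would count tuples $(A,B,x,y)\in \Z^4$ with $y^2 = x^3 + Ax + B$, $|x|\leq T^{5-\delta}$, $|A|\leq T^2$, $|B|\leq T^3$. For each $(x,y)$ the equation $B = y^2 - x^3 - Ax$ is linear in $(A,B)$, so I would split by whether $|x|\leq T$ or $T < |x|\leq T^{5-\delta}$. In the first range $|y|\ll T^{3/2}$ (from $|B|\leq T^3$) and $A$ is free in $[-T^2,T^2]$, contributing $\ll T\cdot T^{3/2}\cdot T^2 = T^{9/2}$. In the second, $y^2$ must lie in an interval of length $\ll T^3 + T^2|x|$ centered at $x^3$, giving $\ll T^3/|x|^{3/2} + T^2/|x|^{1/2}$ choices of $y$, and $A$ lies in an interval of length $\ll T^3/|x|$; integrating over $x$ gives $\ll T^{9/2}$ again. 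Thus this condition excludes density $\ll T^{-1/2}$.

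For the rational-point condition, I would use the standard denominator normalization for a point on the Weierstrass model $E_{A,B}$, writing $(x,y) = (a/e^2,b/e^3)$ with $\gcd(a,e)=\gcd(b,e)=1$ and $e\geq 1$. The height bound $h(Q)\leq (1/2-\delta)\log T$ then gives $|a|\leq T^{1/2-\delta}$ and $e\leq T^{1/4-\delta/2}$. The defining equation rewrites as the linear Diophantine relation
$$aA + e^2 B = \frac{b^2 - a^3}{e^4},$$
which is solvable in integers exactly when $e^4\mid b^2 - a^3$; since $\gcd(a,e^2)=1$, the solutions form an arithmetic progression with step $(e^2,-a)$ in $(A,B)$, yielding $\ll 1 + T^2/e^2$ points in the box. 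The congruence $b^2\equiv a^3\pmod{e^4}$ admits $\ll O(1)^{\omega(e)}$ residue classes for $b$, and the $B$-bound forces $|b|\ll T^{3/2}e^3$. Summing over all admissible $(a,e,b)$ yields $\ll T^{5-\Omega(\delta)}$. The hard part is keeping the three parameters $a,e,b$ simultaneously controlled together with the congruence condition, but this is bookkeeping rather than a conceptual difficulty, and the final estimate is dominated by small $e$, where the convergent sum $\sum e^{-c}$ absorbs the contribution.
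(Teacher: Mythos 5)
Your overall strategy is the paper's: bound the violators of each of the three defining conditions separately by $\ll T^{5-\Omega(\delta)}$ and use $\#|\mathcal{F}_\bullet|\asymp T^5$. The torsion step is handled in the paper by citing Harron--Snowden, which you also invoke (your direct $2$-torsion parameterization giving $\ll T^3$ curves is fine and the higher-order cases are indeed smaller). The integral-point count is essentially identical to the paper's, but note a slip in your second range: the number of admissible $y$ for a fixed $x$ is $\ll 1+T^3/|x|^{3/2}+T^2/|x|^{1/2}$, and the omitted $1$ matters, since the term $1\cdot(1+T^3/|x|)$ summed over $T<|x|\leq T^{5-\delta}$ already contributes $\asymp T^{5-\delta}$; so the correct bound for this condition is $\ll T^{5-\delta}$ (density $T^{-\delta}$), not $T^{9/2}$ (density $T^{-1/2}$). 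This does not harm the lemma, which only needs $T^{-\Omega(\delta)}$, but the stated exponent is wrong.

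Where you genuinely diverge is the rational-point condition, and there you make the problem harder than it is. The paper clears denominators to $y^2=x^3+Ad^4x+Bd^6$ with $|x|\leq T^{1/2-\delta}$, $|d|\leq T^{1/4-\delta/2}$, observes $|y|\ll T^{3/2}d^3$ and that $(x,y,d,A)$ determines $B$, and multiplies the ranges to get $\ll T^{5-3\delta}$ --- no congruence analysis or lattice-point count on a line is needed. Your route (restricting $b$ to $O(1)^{\omega(e)}$ residue classes mod $e^4$ and counting $(A,B)$ on the progression with step $(e^2,-a)$, giving $\ll 1+T^2/e^2$ per triple) does close: summing $O(1)^{\omega(e)}\bigl(1+T^{3/2}/e\bigr)\bigl(1+T^2/e^2\bigr)$ over $|a|\leq T^{1/2-\delta}$ and $e\leq T^{1/4-\delta/2}$ gives $\ll T^{4-\delta+\epsilon}$, which is even stronger than the paper's bound, and the $a=0$ edge case is still covered by the same progression count. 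So your version buys a sharper exponent at the cost of the congruence bookkeeping you flag as the ``hard part,'' while the paper's trivial count (fixing everything but $B$) suffices because only $T^{5-\Omega(\delta)}$ is required.
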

\begin{proof}
Theorem 1.1 in Harron-Snowden \cite{harronsnowden} allows us to impose the first condition. For the second condition, the number of $A\ll T^2, B\ll T^3$ such that there is at least one integral point $P\in E_{A,B}(\Z)$ with $h(P)\leq (5-\delta)\log{T}$ is at most
\begin{align*}
&\#|\{(x,y,A,B)\in \Z^4 : |x|\leq T^{5-\delta}, A\ll T^2, B\ll T^3, y^2 = x^3 + Ax + B\}| \\\quad\quad&= \#|\{(x,y,A,B) : |x|\leq 10^{10}T, A\ll T^2, B\ll T^3, y^2 = x^3 + Ax + B\}| \\&\quad\quad\quad+ \sum_{10^{10}T\leq |x|\ll T^{5-\delta}} \#|\{(y,A,B) : A\ll T^2, B\ll T^3, y^2 = x^3 + Ax + B\}|.
\end{align*}\noindent
To bound the first sum, note that, given $(x,y,A)$, $B = y^2 - x^3 - Ax$ is determined. Moreover $$y^2\ll |x|^3 + T^2 |x| + T^3\ll T^3,$$ so that $y\ll T^{\frac{3}{2}}$. Therefore the number of $(x,y,A,B)$ is at most $$\ll T\cdot T^{\frac{3}{2}}\cdot T^2 = T^{4.5}.$$

For the second sum, note that in this range $$|y^2 - x^3|\ll T^2 |x| + T^3\ll T^2 |x|,$$ whence $y\asymp |x|^{\frac{3}{2}}$. Now, if $(y,A,B)$ and $(y',A',B')$ lie in the solution set and (without loss of generality) $y,y' > 0$, then $$y^2 - y'^2 = x(A-A') + (B-B'),$$ so that $$|y - y'|\ll \frac{T^2|x| + T^3}{|x|^{\frac{3}{2}}}\ll \frac{T^2}{|x|^{\frac{1}{2}}}.$$ Therefore the number of $y$ for which there exist $A,B$ making $(x,y,A,B)$ a solution is at most $$\ll 1 + \frac{T^2}{|x|^{\frac{1}{2}}}.$$

Next, given $x$ and $y$, if $(x,y,A,B)$ and $(x,y,A',B')$ are solutions, then $(A-A')x = B'-B$, so that $$|A - A'|\ll \frac{T^3}{|x|},$$ whence the number of $A$ for which there exists a $B$ making $(x,y,A,B)$ a solution is at most $$\ll 1 + \frac{T^3}{|x|}.$$

Putting these together, the second sum is bounded above by
\begin{align*}
&\sum_{10^{10}T\leq |x|\ll T^{5-\delta}} \#|\{(y,A,B) : A\ll T^2, B\ll T^3, y^2 = x^3 + Ax + B\}|\\\quad\quad&\ll \sum_{10^{10}T\leq |x|\ll T^{5-\delta}} \left(1 + \frac{T^2}{|x|^{\frac{1}{2}}}\right)\left(1 + \frac{T^3}{|x|}\right)\\\quad\quad&\ll T^{5-\delta},
\end{align*}\noindent
as desired.

Finally, for the third condition, note, as above, that the number of $A\ll T^2, B\ll T^3$ such that there is at least one rational point $Q = \left(\frac{x}{d^2}, \frac{y}{d^3}\right)\in E_{A,B}(\Q)$ with $h(Q)\leq \left(\frac{1}{2}-\delta\right)\log{T}$ is at most
\begin{align*}
\#|(x,y,d,A,B) : y^2 = x^3 + Ad^4 x + Bd^6, |x|\leq T^{\frac{1}{2}-\delta}, |d|\leq T^{\frac{1}{4}-\frac{\delta}{2}}, A\ll T^2, B\ll T^3\}.
\end{align*}
Note that if $(x,y,d,A,B)$ is a solution, then $y\ll T^{\frac{3}{2}} d^3$. Moreover, $(x,y,d,A)$ determines $B$. Hence this count is at most: $$\ll T^{\frac{1}{2}-\delta}\cdot \left(T^{\frac{3}{2}}\cdot T^{\frac{3}{4} - \frac{3\delta}{2}}\right)\cdot T^{\frac{1}{4}-\frac{\delta}{2}}\cdot T^2 = T^{5 - 3\delta},$$ whence we are done.
\end{proof}

\subsection{Local heights and a gap principle}

The purpose of restricting to this subfamily is to be able to give a very strong estimate on the difference between the Weil and canonical heights on the curves in this family. Specifically,
\begin{lem}\label{weil vs canonical}
Let $E\in \mathcal{F}_*$. Let $h,\hat{h}$ be the Weil and canonical heights on $E_{A,B}$, respectively. Let $Q\in E(\Q)$. Then $$\hat{h}(Q) - h(Q) = \log^+{|\Delta^{-\frac{1}{6}}x(Q)|} + \frac{1}{6}\log{|\Delta|} - \log^+{|x(Q)|} + O(\delta\log{T}).$$ In particular, $$h(Q)\leq \hat{h}(Q) + O(\delta\log{T})$$ and, if $|x(Q)|\geq |\Delta|^{\frac{1}{6}}$, $$\hat{h}(Q) - h(Q) = O(\delta\log{T}).$$
\end{lem}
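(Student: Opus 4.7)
My plan is to decompose
$$\hat{h}(Q) - h(Q) = \bigl(\hat{\lambda}_\infty(Q) - \log^+|x(Q)|_\infty\bigr) + \sum_{p}\bigl(\hat{\lambda}_p(Q) - \log^+|x(Q)|_p\bigr)$$
via local Néron heights, and show that the non-archimedean contributions are absorbed into an $O(\delta\log T)$ error while the archimedean contribution supplies the main term.

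For the non-archimedean places, at a prime $p$ of good reduction one has $\hat{\lambda}_p(Q) = \log^+|x(Q)|_p$ on the nose, using that $E_{A,B}$ is minimal at $p$ by the defining condition $p^4\mid A\Rightarrow p^6\nmid B$, so the local summand vanishes. At a prime of multiplicative reduction of type $I_n$ with $n = v_p(\Delta)$, Tate's uniformization expresses $\hat{\lambda}_p(Q) - \log^+|x(Q)|_p$ as a function on the component group $\Z/n\Z$ of absolute value $O(n\log p)$; crucially, when $n=1$ the component group is trivial so the correction vanishes identically (every $\Q_p$-point reduces to the identity component, on which $\hat{\lambda}_p = \log^+|x|_p$). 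At primes of additive reduction, the correction is $O(v_p(\Delta)\log p)$ by Kodaira classification (with the primes $p = 2, 3$ contributing only $O(1)$ via $e_2\leq 8$, $e_3\leq 5$). Summing, the total non-archimedean contribution is bounded in absolute value by
$$O\Bigl(\log\prod_{p^2\mid\Delta}p^{v_p(\Delta)}\Bigr) = O(\delta\log T)$$
by condition~(5) defining $\mathcal{F}_\bullet$.

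For the archimedean place I would exploit the scaling symmetry of the local Néron height. The admissible transformation $x\mapsto x' := |\Delta|^{-1/6}x$, $y\mapsto y' := |\Delta|^{-1/4}y$ sends $E_{A,B}(\R)$ to the model $(y')^2 = (x')^3 + A'x' + B'$ with $A' := |\Delta|^{-1/3}A$, $B' := |\Delta|^{-1/2}B$, which by the conditions $|A|\ll T^2$, $|B|\ll T^3$, $|\Delta|\geq T^{6-2\delta}$ satisfies $|\Delta'| = 1$ and $|A'|, |B'|\leq T^{O(\delta)}$ — essentially unit-sized. On such curves, Silverman's archimedean local-height estimate yields $\hat{\lambda}_\infty^{E'}(Q') = \log^+|x'(Q')|_\infty + O(\log\max(1, |A'|, |B'|)) = \log^+|x'(Q')|_\infty + O(\delta\log T)$. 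The scaling transformation law $\hat{\lambda}_\infty^{E}(Q) = \hat{\lambda}_\infty^{E'}(Q') + \tfrac{1}{6}\log|\Delta|$ (a clean identity since both sides share the same logarithmic polar behavior at the identity) then gives
$$\hat{\lambda}_\infty(Q) - \log^+|x(Q)|_\infty = \log^+|\Delta^{-1/6}x(Q)|_\infty + \tfrac{1}{6}\log|\Delta| - \log^+|x(Q)|_\infty + O(\delta\log T),$$
which combined with the non-archimedean estimate yields the asserted identity. The consequences are immediate: applying $\log^+(ab)\leq \log^+a + \log^+b$ with $a = \Delta^{-1/6}x(Q)$, $b = |\Delta|^{1/6}$ shows the correction is nonnegative, so $h(Q)\leq \hat{h}(Q) + O(\delta\log T)$; and when $|x(Q)|\geq|\Delta|^{1/6}$ it vanishes outright.

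The main obstacle is the non-archimedean analysis: the naive bound $O(v_p(\Delta)\log p)$ per bad prime would yield a global error $O(\log|\Delta|)$, which is a full order of magnitude too large. Overcoming this relies crucially on the fact that $I_1$ primes — which account for almost all of $\log|\Delta|$ for curves in $\mathcal{F}_\bullet$ — contribute nothing to the sum, and on the defining condition that keeps the "non-$I_1$" bad locus of $\Delta$ small, namely $\prod_{p^2\mid\Delta} p^{v_p(\Delta)}\leq T^{4\delta}$.
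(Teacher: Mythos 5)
Your overall architecture --- splitting $\hat h - h$ into local N\'eron corrections, using condition (5) of $\mathcal{F}_\bullet$ to make the non-$I_1$ bad part of $\Delta$ negligible, and doing the real work at the multiplicative primes and the archimedean place --- is the same as the paper's. But there is a genuine gap at the crux, namely in where the main term $\tfrac16\log|\Delta|$ comes from, and it traces back to the fact that you never fix a normalization of the local heights $\hat\lambda_v$. In the normalization of the sources one would cite for your two key steps (Lang, Ch.~III, Thm.~5.1 for the Tate curve; Silverman's difference paper for the archimedean estimate), which is the normalization the paper uses, your claim that the correction at an $I_1$ prime ``vanishes identically'' is false: every $\Q_p$-point does lie in $E_0(\Q_p)$, but on the identity component the Tate/Bernoulli formula gives $\hat\lambda_p(Q) - \log^+|x(Q)|_p = \tfrac16 v_p(\Delta)\log p = -\tfrac16\log|\Delta|_p$, not $0$; the term $\tfrac12 B_2(0)\,v_p(\Delta)\log p$ does not disappear just because the component group is trivial. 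In the paper these nonzero $I_1$ contributions are exactly what sum, via the product formula and condition (5), to the $\tfrac16\log|\Delta|$ in the statement, while the archimedean correction is only $\log^+|\Delta^{-1/6}x(Q)| - \log^+|x(Q)| + O(\delta\log T)$, with no $\tfrac16\log|\Delta|$ shift.

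Your proof instead produces $\tfrac16\log|\Delta|$ archimedeanly, from the asserted change-of-model law $\hat\lambda^E_\infty(Q) = \hat\lambda^{E'}_\infty(Q') + \tfrac16\log|\Delta|$. That law and your $I_1$ claim cannot both hold in the same normalization: they hold simultaneously only in the model-dependent convention (the Lang--Silverman local heights shifted by $\tfrac16\log|\Delta|_v$ at each place), whereas in the model-independent convention the archimedean local height is literally invariant under your rescaling and there is no shift at all. Your justification --- ``both sides share the same logarithmic polar behavior at the identity'' --- cannot settle this, because matching polar behavior determines a local height only up to an additive constant, and that constant is the entire main term; the identical reasoning would just as happily ``prove'' the formula without the $\tfrac16\log|\Delta|$ term (model-independent normalization at infinity, your $I_1$ claim taken at face value). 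To repair the argument, either work in the cited normalization and use the correct value $\tfrac16 v_p(\Delta)\log p$ at $I_1$ primes together with the product formula (the paper's route), or else define the model-dependent normalization explicitly, prove the exact transformation constant under $x\mapsto |\Delta|^{-1/6}x$, and verify the Tate-curve vanishing in that same convention. A minor further point: your appeal to $e_2\le 8$, $e_3\le 5$ does not bound the corrections at $2$ and $3$ --- it is $v_p(\Delta)$, not the conductor exponent, that controls $\hat\lambda_p - \lambda_p$; this is harmless only because condition (5) (together with $v_2(\Delta)\ge 4$ and the possibility $v_3(\Delta)=1$ contributing $O(1)$) covers those primes, which is how the paper handles them.
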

\begin{proof}
Write $\hat{h} - h = \sum_v \hat{\lambda}_v - \lambda_v$, where $\lambda_v := \log^+{|\cdot|_v}$, $\hat{\lambda}_v$ are the N\'{e}ron local heights, and $v$ runs over the places of $\Q$. At a prime $p\neq 2,3$ of good reduction, by e.g.\ Theorem 4.1\footnote{Note: our normalization differs from Silverman's by a factor of $2$.} in \cite{silvermandifferencemathcomp}, the local heights are equal. At a prime $p$ of additive reduction (so $p\vert (A,B)$) or at $p = 2$ or $3$, by the same theorem we see that $$0\leq \hat{\lambda}_p - \lambda_p\leq -\frac{1}{6}\log{|\Delta|_p}.$$ Since $p\vert (A,B)$ implies $p^2\vert \Delta$, we see that $$\prod_{p\vert 6(A,B)} p^{v_p(\Delta)}\leq 6\prod_{p^2\vert \Delta} p^{v_p(\Delta)}\ll T^{4\delta},$$ whence the sum of these contributions is $$0\leq \sum_{p\vert 6(A,B)} \hat{\lambda}_p - \lambda_p\ll \delta\log{T}.$$ At a prime $p\neq 2,3$ of multiplicative reduction, by Chapter III Theorem 5.1 of \cite{lang}, since $v_p(\Delta) = 1$ (whence $\alpha = 0$ in Lang's notation), we see that $$\hat{\lambda}_p - \lambda_p = -\frac{1}{6}\log{|\Delta|_p}.$$ Finally, at the infinite place, since $j(E_{A,B})\ll T^{O(\delta)}$, by combining Proposition 5.4 and (31) of \cite{silvermandifferencemathcomp} we find that $$\hat{\lambda}_\infty(Q) - \lambda_\infty(Q) = \log^+{|\Delta^{-\frac{1}{6}}x(Q)|} - \log^+{|x(Q)|} + O(\delta\log{T}).$$ Summing these all up and using the product formula gives the result.
\end{proof}

Given that the Weil and canonical heights are so close, we may now prove a bound on the angle between two integral points by proving a corresponding bound with Weil heights replacing canonical heights. Specifically,
\begin{lem}[Helfgott-Mumford gap principle, cf.\ \cite{helfgottsquarefree}]\label{helfgott-mumford gap principle}
Let $E\in \mathcal{F}_*$. Let $P\neq R\in E(\Z)$ with $h(P)\geq h(R)$ (recall that automatically $h(P), h(R) > (5-\delta)\log{T}$). Then: $$\hat{h}(P+R)\leq 2h(P) + h(R) + O(1).$$
\end{lem}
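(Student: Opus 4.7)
My plan is first to prove the Weil-height version $h(P+R) \leq 2 h(P) + h(R) + O(1)$ by a direct computation with the chord-and-tangent formula, and then to invoke Lemma \ref{weil vs canonical} to pass from $h$ to $\hat h$. The case $x_P = x_R$ forces $P = -R$ (since $P \neq R$), so $P+R = O$ and the claim is trivial; I may therefore assume $x_P \neq x_R$. The addition law gives $x(P+R) = N/D$ with $D = (x_P - x_R)^2$ and $N = (y_P - y_R)^2 - (x_P + x_R)(x_P - x_R)^2$, and substituting the curve equation $y_\bullet^2 = x_\bullet^3 + A x_\bullet + B$ produces the key cancellation
\[ N = x_P x_R (x_P + x_R) + A(x_P + x_R) + 2B - 2 y_P y_R, \]
which kills the $x_P^3$ terms one would naively expect from $(y_P - y_R)^2$. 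Since $P,R \in E(\Z)$ we have $N, D \in \Z$, so $H(x(P+R)) \leq \max(|N|, |D|)$.

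Next I would bound each piece using the $\mathcal{F}_*$ conditions: $|x_P|, |x_R| \geq T^{5-\delta}$ while $|A| \ll T^2$ and $|B| \ll T^3$, so $|y_\bullet| \asymp |x_\bullet|^{3/2}$. Using $|x_P| \geq |x_R| \geq 1$, the first summand of $N$ is $\leq 2|x_P|^2|x_R|$; the $A$ and $B$ terms are dominated since $|x_P| |x_R| \geq T^{2(5-\delta)}$ vastly exceeds both $|A|$ and $|B|^{2/3}$; and $|y_P y_R| \ll |x_P|^{3/2}|x_R|^{3/2} \leq |x_P|^2 |x_R|$ because $|x_P| \geq |x_R|$. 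Meanwhile $|D| \leq 4|x_P|^2 \leq 4|x_P|^2|x_R|$. Combining these gives $H(x(P+R)) \ll |x_P|^2 |x_R| = H(P)^2 H(R)$, hence $h(P+R) \leq 2 h(P) + h(R) + O(1)$.

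To conclude, I would invoke Lemma \ref{weil vs canonical}, which bounds $\hat h(Q) - h(Q) \leq \frac{1}{6} \log|\Delta| + O(\delta \log T) \ll \log T$, the worst case being $|x(Q)| < |\Delta|^{1/6}$ where the full archimedean contribution survives. Since $h(P), h(R) \geq (5-\delta)\log T$ are themselves $\Omega(\log T)$, this loss is negligible and either absorbs into the $O(1)$ under the paper's convention or into an infinitesimal enlargement of the coefficients. The main obstacle is exactly this Weil-to-canonical conversion — the reason for carving out $\mathcal{F}_*$ with $\prod_{p^2 | \Delta} p^{v_p(\Delta)} \leq T^{4\delta}$ and $(A,B) \leq T^\delta$ is precisely to force the nonarchimedean local discrepancies $\hat\lambda_p - \lambda_p$ to be $O(\delta \log T)$, leaving only the explicit archimedean term that is handled via Silverman's formulas in the proof of Lemma \ref{weil vs canonical}.
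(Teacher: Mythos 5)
Your first step --- the Weil-height inequality $h(P+R)\leq 2h(P)+h(R)+O(1)$ via the cancellation $N=x_Px_R(x_P+x_R)+A(x_P+x_R)+2B-2y_Py_R$ and the bounds $|N|\ll |x_P|^2|x_R|$, $|D|\ll|x_P|^2$ --- is exactly the paper's computation and is fine. The gap is in the conversion from $h$ to $\hat h$. Lemma \ref{weil vs canonical} only gives $\hat h(Q)-h(Q)=O(\delta\log T)$ when $|x(Q)|\geq|\Delta|^{1/6}$; in the complementary case the discrepancy can be as large as $\tfrac16\log|\Delta|\approx\log T$ (recall $|\Delta|\geq T^{6-2\delta}$ on $\mathcal{F}_*$). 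Your claim that this loss ``absorbs into the $O(1)$ \ldots or into an infinitesimal enlargement of the coefficients'' is false as stated: since $h(R)$ may be as small as $(5-\delta)\log T$, an additive $\log T$ is about $\tfrac15\,h(R)$, i.e.\ a $20\%$ enlargement of the $h(R)$ coefficient. Feeding $\hat h(P+R)\leq 2h(P)+h(R)+\log T+O(\delta\log T)$ into Lemma \ref{lower bound on angle} would give $\cos\theta_{P,R}\leq\tfrac12\max\bigl(\sqrt{h(P)/h(R)},\sqrt{h(R)/h(P)}\bigr)+\tfrac{1}{2(5-\delta)}+O(\delta)$, weakening the repulsion angle and hence the sphere-packing counts and the explicit constant of Theorem \ref{constant theorem}; so the loss is genuinely material, not cosmetic.

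What saves the lemma, and what the paper does, is that the two bad events cannot occur together: if $|x(P+R)|<|\Delta|^{1/6}$, write $x(P+R)=W/Z$ in lowest terms and note the exact cancellation $h(P+R)-\log^+|x(P+R)|=\log|Z|$, with $|Z|\leq(x_P-x_R)^2\ll|x_P|^2$ since $Z$ divides the unreduced denominator. Then Lemma \ref{weil vs canonical} gives $\hat h(P+R)=\log|Z|+\tfrac16\log|\Delta|+O(\delta\log T)\leq 2h(P)+\log T+O(\delta\log T)$, which is at most $2h(P)+h(R)$ because $h(R)>(5-\delta)\log T$; in the other case $|x(P+R)|\geq|\Delta|^{1/6}$ the discrepancy is $O(\delta\log T)$ and your Weil-height bound transfers directly. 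In other words, precisely when the height discrepancy is large, $|x(P+R)|$ is small and then $h(P+R)$ is already bounded by $2h(P)+\log T$ rather than $2h(P)+h(R)$; your write-up double-counts by combining the generic Weil bound with the worst-case discrepancy. Without this case analysis (or some equivalent use of the $-\log^+|x(P+R)|$ term you discarded), the stated inequality is not proved.
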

\begin{proof}
Write $P =: (X,Y)$ and $R =: (x,y)$ with $|X|\geq |x|$. Note that since $|X|, |x|\geq T^{5-\delta}$, we have that $|Y|\sim |X|^{\frac{3}{2}}$ and $|y|\sim |x|^{\frac{3}{2}}$. Now
\begin{align*}
x(P+R) &= \frac{(Y-y)^2}{(X-x)^2} - X - x
\\&= \frac{X^2 x + X x^2 - 2Yy + A(X + x) + 2B}{(X - x)^2}.
\end{align*}
The numerator has absolute value at most $\ll |X|^2 |x|$ by hypothesis. The denominator has absolute value at most $\ll |X|^2$. Therefore, since cancelling common factors will only make the numerator and denominator smaller, we see that $h(P+R)\leq 2h(P) + h(R) + O(1)$. If $|x(P+R)|\geq |\Delta|^{\frac{1}{6}}$, then this completes the proof, by Lemma \ref{weil vs canonical}. Otherwise, write $x(P+Q) = \frac{W}{Z}$ in lowest terms. Then \begin{align*}
\hat{h}(P+R) &= h(P+R) + \frac{1}{6}\log{|\Delta|} - \log^+{|x(P+R)|} + O(\delta\log{T})
\\&= \max(\log{|W|}, \log{|Z|}) + \log{T} - \max(\log{|W|} - \log{|Z|}, 0) + O(\delta\log{T})
\\&= \log{T} + \log{|Z|} + O(\delta\log{T}).
\end{align*}
Since as we saw $|Z|\ll |X|^2$, we find that $\hat{h}(P+R)\leq \log{T} + 2h(R) + O(\delta\log{T})$. Observing that $h(P)\geq (5-\delta)\log{T}$ finishes the result.
\end{proof}\noindent

This results in a lower bound on the angle of integral points close in absolute value:
\begin{lem}\label{lower bound on angle}
Let $E\in \mathcal{F}_*$. Let $P\neq R\in E(\Z)$. Let $\theta_{P,R}$ be the angle between $P$ and $R$ in the Euclidean space $E(\Q)\otimes_\Z \R$. Then: $$\cos{\theta_{P,R}}\leq \frac{1}{2}\max\left(\sqrt{\frac{h(P)}{h(R)}}, \sqrt{\frac{h(R)}{h(P)}}\right) + O(\delta).$$
\end{lem}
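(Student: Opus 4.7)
The plan is to combine the polarization identity for the canonical height with the Helfgott-Mumford gap principle (Lemma \ref{helfgott-mumford gap principle}) and the Weil/canonical height comparison (Lemma \ref{weil vs canonical}). Since $\hat{h}$ is a positive-definite quadratic form on $E(\Q)\otimes_\Z \R$ with associated N\'{e}ron-Tate pairing, we may write
$$\cos{\theta_{P,R}} = \frac{\hat{h}(P+R) - \hat{h}(P) - \hat{h}(R)}{2\sqrt{\hat{h}(P)\hat{h}(R)}}.$$
The bound is symmetric in $P$ and $R$, so I will assume without loss of generality that $h(P)\geq h(R)$; the degenerate case $R = -P$ gives $\cos{\theta_{P,R}} = -1$, which is automatic.

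For the numerator, the Helfgott-Mumford gap principle yields $\hat{h}(P+R)\leq 2h(P) + h(R) + O(1)$. For the denominator, since $P, R\in E(\Z)$ satisfy $h(P), h(R) > (5-\delta)\log{T}$, and $|\Delta|\ll T^6$, we have $|x(P)|, |x(R)| \geq T^{5-\delta} \geq |\Delta|^{\frac{1}{6}}$, so the sharper conclusion of Lemma \ref{weil vs canonical} applies and gives $\hat{h}(Q) = h(Q) + O(\delta\log{T})$ for $Q\in \{P, R\}$. Substituting,
$$\hat{h}(P+R) - \hat{h}(P) - \hat{h}(R)\leq h(P) + O(\delta\log{T}),$$
while
$$2\sqrt{\hat{h}(P)\hat{h}(R)} = 2\sqrt{h(P)h(R)}\,(1 + O(\delta)),$$
the latter because $\delta\log{T}/h(P), \delta\log{T}/h(R) = O(\delta)$ given the lower bounds on $h(P), h(R)$.

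Combining these,
$$\cos{\theta_{P,R}}\leq \frac{1}{2}\sqrt{\frac{h(P)}{h(R)}}\,(1+O(\delta)) + O\!\left(\frac{\delta\log{T}}{\sqrt{h(P)h(R)}}\right),$$
and the second error term is $O(\delta)$ since $h(P), h(R)\gg \log{T}$. If $\sqrt{h(P)/h(R)}\leq 2$, then the multiplicative factor $(1+O(\delta))$ contributes only an additive $O(\delta)$, yielding the claim. Otherwise $\sqrt{h(P)/h(R)} > 2$, in which case the right-hand side of the desired inequality already exceeds $1\geq \cos{\theta_{P,R}}$ and the statement is trivial. The only minor subtlety is this final multiplicative-to-additive conversion of the error, but it follows by observing that the bound is vacuous outside the regime $h(P)\asymp h(R)$; everything else is direct substitution of the two prior lemmas, so I do not anticipate a genuine obstacle.
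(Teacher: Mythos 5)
Your proposal is correct and follows essentially the same route as the paper: the polarization identity for $\hat{h}$, Lemma \ref{weil vs canonical} to trade $\hat{h}(P),\hat{h}(R)$ for $h(P),h(R)$ up to $O(\delta\log T)$, and Lemma \ref{helfgott-mumford gap principle} for the numerator. The extra care you take (the $R=-P$ case and the multiplicative-to-additive error conversion, which the paper handles implicitly via $|\cos\theta_{P,R}|\leq 1$) is fine and changes nothing essential.
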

\begin{proof}
By definition, $$\cos{\theta_{P,R}} = \frac{\hat{h}(P+R) - \hat{h}(P) - \hat{h}(R)}{2\sqrt{\hat{h}(P)\hat{h}(R)}}.$$ By Lemma \ref{weil vs canonical} and the fact that $h(P), h(R) > (5-\delta)\log{T}$, we find that $$\cos{\theta_{P,R}} = \frac{\hat{h}(P+R) - h(P) - h(R)}{2\sqrt{h(P)h(R)}} + O(\delta).$$ Applying Lemma \ref{helfgott-mumford gap principle} then concludes the argument.
\end{proof}

\subsection{Decomposing the set of integral points into classes: $\I$--$\IV$}

Fix now a parameter $D > 1$. We will take $D$ to be $\ll 1$ in the end. Let $$\tilde{D} := \frac{D + \sqrt{D^2 + 4}}{2},$$ so that \begin{align}\label{def of tilde D}\frac{\tilde{D}^2}{(\tilde{D}^2-1)^2} = \frac{1}{D^2}.\end{align} Fix $E\in \mathcal{F}_*$. Let $r := \rank(E)$. Note that we may assume $r > 0$ since $E(\Q)_{\mathrm{tors}} = 0$ and so $\#|E(\Z)| = 0$ if $r = 0$. So choose $P_1, \ldots, P_r\in E(\Q)$ such that $P_1\neq 0$ has minimal canonical height (recall that $E$ has no rational torsion) and $P_i$ has minimal canonical height among points not inside $\sp_\Z(P_1, \ldots, P_{i-1}) + 3E(\Q)$. Note that since $$\hat{h}(P_i \pm P_j)\geq \hat{h}(P_{\max(i,j)})$$ it follows that $$|\langle P_i, P_j\rangle|\leq \frac{\hat{h}(P_{\min(i,j)})}{2}.$$ It follows that, for any $\eps_i = \pm 1$,
\begin{align}\label{bound on Rs}
\hat{h}\left(\sum_{i=1}^k \eps_i P_i\right)\leq \sum_{i=1}^k (k-i+1)\hat{h}(P_i).
\end{align}

Next note that $P_1, \ldots, P_r$ is an $\F_3$-basis for $E(\Q)/3E(\Q)$. Given $Q\in E(\Q)$, write $i(Q) := \min \{i\vert Q\in \sp_\Z(P_1, \ldots, P_i) + 3E(\Q)\}$ --- i.e., $i(Q)$ is the least $i$ for which $Q$ is congruent to an element of the $\Z$-span of $P_1, \ldots, P_i$ modulo $3$. (Note that $i = 0$ implies $Q$ is a multiple of $3$.) Write $$H_1 := \max\left((5-O(\delta))\log{T}, \hat{h}(P_1)\right),$$ where, say, the implied constant is larger than one plus twice the implied constants in Lemma \ref{weil vs canonical}, and $$H_i := \max\left(\hat{h}(P_i), \tilde{D}^2\cdot H_{i-1}\right).\footnote{For instance, the condition $h(P) > \tilde{D}^2\cdot H_i$ implies $h(P) > \tilde{D}^{2(i-j+1)} \hat{h}(P_j)$ for every $j\leq i$, and it also implies $h(P) > \tilde{D}^{2i}\cdot (5-O(\delta))\log{T}$.}$$ Then if $r > 1$ write
\begin{align*}
E(\Z) &= 3E(\Q)\cap E(\Z)\\&\cup \bigcup_{i = 1}^r \{P\in E(\Z), H_i\leq \hat{h}(P)\leq \tilde{D}^2\cdot H_i\}\\&\cup \bigcup_{i=1}^r \{P\in E(\Z), i(P) = i, \hat{h}(P) > \tilde{D}^2\cdot H_i\}
\\&=: \I\cup \bigcup_{i=1}^r \II_D^{(i)}\cup \bigcup_{i=1}^r \III_D^{(i)},
\end{align*}\noindent
(Note that our notation $\I,\II,\III$ is slightly different from the outline, since we have already gotten rid of ``small'' points.)

In words, what we have done is broken $E(\Z)$ into multiples of rational points (which will be easy to handle)\footnote{In the rank $1$ case all points are multiples of a rational point, so in some sense ``$E(\Z) =: \I$'' would be consistent notation here, but we have not bothered because it would be unnecessarily confusing.}, points of ``medium'' height in their respective cosets, and then points of ``large'' height in their respective cosets. (The curves with points of small height have already been thrown out.) Note that this decomposition is complete because if $P\in E(\Z)$ lies outside the union, then $i(P) =: i > 0$ and $\hat{h}(P)\leq \tilde{D}^2 H_i$, so $\hat{h}(P) < H_i$. Therefore, since $i(P) = i$, we must have $H_i = \tilde{D}^2 H_{i-1}$ by minimality of $P_i$. Thus $\hat{P} < H_{i-1}$. Proceeding inductively, we eventually find that $\hat{P} < (5 - O(\delta))\log{T}$, contradicting $h(P) > (5-\delta)\log{T}$ combined with Lemma \ref{weil vs canonical}.

Let us further write 
\begin{align*}
\III_D^{(i)} &= \bigcup_{\vec{a}\in \{-1,0,1\}^i : a_i > 0} \{P\in \III_D^{(i)}, P\equiv \sum_{j=1}^i a_j P_j\pmod{3}\}\\&=: \bigcup_{\vec{a}\in \{-1,0,1\}^i : a_i > 0} \III_D^{(i,\vec{a})}.
\end{align*}\noindent
In words, we are breaking the points of ``large'' height into their congruence classes modulo $3$. (Since we will be counting points and their negatives together below, we have forced $a_i > 0$ rather than $a_i\neq 0$.)

Given $\vec{a}\in \{-1,0,1\}^i$ with $a_i\neq 0$, we will write $R_{\vec{a}} := \sum_{j=1}^i a_j P_j$. Let us further break $\III_D^{(i,\vec{a})}$ into a set we will show is empty and a set to which we can apply Roth-like techniques. Specifically, write
\begin{align*}
\III_D^{(i,\vec{a})} &= \left\{P\in \III_D^{(i,\vec{a})} : \exists! Q\in E(\Q) : P = 3Q + R_{\vec{a}}; \right.\\&\quad\quad\left.\forall \tilde{R}\in E(\Qbar)\text{ with } 3\tilde{R} = -R_{\vec{a}},\text{ we have } |x(Q) - x(\tilde{R})| > \frac{1}{2}\min_{3\tilde{R}' = -R_{\vec{a}}, \tilde{R}'\neq \tilde{R}} |x(Q) - x(\tilde{R}')|\right\}
\\&\cup \bigcup_{\tilde{R}\in E(\Qbar) : 3\tilde{R} = -R_{\vec{a}}} \left\{P\in \III_D^{(i,\vec{a})} : \exists! Q\in E(\Q) : P = 3Q + R_{\vec{a}};   \right.\\&\quad\quad\quad\quad\quad\quad\quad\quad\quad\left. |x(Q) - x(\tilde{R})|\leq \frac{1}{2}\min_{3\tilde{R}' = -R_{\vec{a}}, \tilde{R}'\neq \tilde{R}} |x(Q) - x(\tilde{R}')|\right\}
\\&=: \IV_D^{i,\vec{a}}\cup \bigcup_{3\tilde{R} = -R_{\vec{a}}} \III_D^{(i,\vec{a},\tilde{R})}.
\end{align*}\noindent
In words, we have written $P\in \III_D^{(i,\vec{a})}$ as $P = 3Q + R_{\vec{a}}$, and split the points up based on the element of the nine-element set $-\frac{1}{3}R$ that $Q$ is close to. $\IV_D^{(i,\vec{a})}$ is the set of points with $Q$ not close to any point in $-\frac{1}{3}R$, which will be empty once $D$ is sufficiently large. (This is because $x(P)$ is large, so $P$ is close to the origin, so that $Q$ is close to such a solution.)

\subsection{$\I$ is small: multiples of rational points are rarely integral}

Let us now begin bounding the sizes of each of the sets $\I,\ldots,\IV$. The sets $\I$ and $\II_D$ require almost no work. The following lemma expresses the fact that rational points rarely have integral multiples: in the rank one case, at worst one has the generator and its negative as integral points (via the theory of lower bounds on linear forms in elliptic logarithms), and in the higher rank case no triple of a rational point is integral on a curve in our family.
\begin{lem}\label{multiples lemma}
Let $E\in \mathcal{F}_*$. Then: $\#|E(\Z)|\leq 2$ when $r=1$, and $I = \emptyset$ otherwise.
\end{lem}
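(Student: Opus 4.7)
The plan is to combine two clean facts about the multiplication-by-$n$ map on $E_{A,B}$ (for $n\ge 2$) and then specialize to the two cases of the lemma.

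\emph{Reduction.} First I claim that if $nR\in E(\Z)$ for some $R = (p/d^2,b/d^3)\in E(\Q)\setminus\{O\}$ with $\gcd(p,d)=1$, then $d=1$. Substituting into the closed form $x(nR) = x(R) - \psi_{n-1}(R)\psi_{n+1}(R)/\psi_n(R)^2$ and clearing denominators produces a rational number whose integer numerator is, on reduction modulo $d^2$, a unit multiple of $p^{n^2}$: every term in the expansion of the division polynomials that carries a positive power of $d$ dies modulo $d^2$. For $n=3$ this is the clean cancellation $9p^9 - 8p^9 = p^9 \pmod{d^2}$, which I verify directly from the explicit formulas for $\psi_2,\psi_3,\psi_4$ and the relation $y^2=x^3+Ax+B$. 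Integrality of $x(nR)$ then forces $d^2 \mid p^{n^2}$, and coprimality forces $d=1$.

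\emph{Incompatibility.} Second, I claim that if $R\in E(\Z)$ with $|x(R)| > T^{5-\delta}$, then $nR\notin E(\Z)$ for any $n\ge 2$. As a rational function of $a := x(R)$, the map $x\circ [n]$ satisfies $x(nR) = a/n^2 + O(T^{O(1)}/a)$: indeed, $x\psi_n^2 - \psi_{n-1}\psi_{n+1}$ has degree $n^2$ with leading coefficient $n^2 - (n^2-1) = 1$, while $\psi_n^2$ has degree $n^2 - 1$ with leading coefficient $n^2$. In the window $|a| > T^{5-\delta}$ the correction is $o(1)$, so if $x(nR)\in\Z$ then $h(nR) = \log|x(nR)| = h(R) - 2\log n + o(1)$. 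But $|x(R)|$ and $|x(nR)|\asymp |a|/n^2$ both exceed $|\Delta|^{1/6}\asymp T$, so Lemma \ref{weil vs canonical} gives $\hat h - h = O(\delta\log T)$ at both points, and combined with $\hat h(nR) = n^2\hat h(R)$ this yields $h(nR) = n^2 h(R) + O(\delta\log T)$. Equating the two expressions forces $(n^2-1)h(R) = -2\log n + O(\delta\log T)$, an absurdity for $n\ge 2$ and $h(R) > (5-\delta)\log T$ with $\delta$ small.

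\emph{Assembling the lemma.} For $r\ge 2$, any $P\in\I$ factors as $P = 3Q$ for some $Q\in E(\Q)\setminus\{O\}$; the Reduction step (with $n=3$) puts $Q\in E(\Z)$, whence $h(Q)>(5-\delta)\log T$ by the $\mathcal{F}_*$ conditions, and the Incompatibility step (with $R=Q$) then forbids $3Q\in E(\Z)$, contradicting $P\in E(\Z)$. Thus $\I=\emptyset$. For $r=1$, $E(\Q)=\Z P_1$ is torsion-free; were some $nP_1\in E(\Z)$ with $|n|\ge 2$, Reduction would place $P_1\in E(\Z)$ and then Incompatibility (with $R=P_1$) would forbid $nP_1\in E(\Z)$, a contradiction. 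So $E(\Z)\subseteq\{\pm P_1\}$ and $\#|E(\Z)|\le 2$. The hardest point is the uniform leading-term computation in the Incompatibility step: one must confirm that the cancellation yielding the leading coefficient $1$ in $x\psi_n^2 - \psi_{n-1}\psi_{n+1}$ happens on the nose for every $n$, and that the error term $O(T^{O(1)}/a)$ is genuinely swamped by the floor $|a|>T^{5-\delta}$---which is exactly why the ``no small points'' threshold in $\mathcal{F}_*$ was chosen at $(5-\delta)\log T$.
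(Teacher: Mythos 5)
Your Reduction step and your handling of $\I$ for $r\geq 2$ are correct and essentially the paper's own argument: weighted homogeneity of $x\psi_n^2-\psi_{n-1}\psi_{n+1}$ shows that integrality of $nR$ forces the denominator $d=1$, and for the single exponent $n=3$ the leading-term domination (coefficients bounded by absolute constants times powers of $T$, swamped by $|x(Q)|\geq T^{5-\delta}$) rules out $3Q$ being integral; this is exactly how the paper concludes $\I=\emptyset$. Your idea of comparing the real value $x(nR)\approx x(R)/n^2$ with the canonical-height prediction is a pleasant variant that, in this range of $n$, even avoids the Stange/Mah\'e denominator lemma the paper uses.

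The genuine gap is in the rank-one case, precisely at the point you flag: the Incompatibility claim for \emph{all} $n\geq 2$. The estimate $x(nR)=x(R)/n^2+o(1)$ is not uniform in $n$. Justified via polynomial leading terms, it needs the integer coefficients of the division polynomials to be dominated by $|x(R)|\geq T^{5-\delta}$; these coefficients grow like $O(1)^{n^2}$ (Lang), and even with the paper's sharpened estimate (Lemma \ref{coefficient bound}) the domination only holds for $n\ll O(1)^{\sqrt{\log T}}$. Worse, for $n$ beyond roughly $T^{2}$ the approximation is simply false: the elliptic logarithm of $R$ has size about $|x(R)|^{-1/2}$ against a real period $\asymp T^{-1/2}$, so for large $n$ the point $nR$ is no longer near the identity and $x(nR)$ bears no resemblance to $x(R)/n^2$. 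In that regime your two height computations do not conflict at all: $h(nR)=n^2h(R)+O(\delta\log T)$ is perfectly consistent with $nR$ being an enormous integral point (Baker's bound allows $|x(nR)|$ to be doubly exponential in $T$), so no contradiction is available by these means. The paper closes exactly this range with transcendence input: after using Lemma \ref{coefficient bound} together with the lemma of Stange/Mah\'e on denominators to exclude $1<n\ll O(1)^{\sqrt{\log T}}$, it invokes David's lower bounds for linear forms in elliptic logarithms (via the corollary from \cite{gebelpethozimmer}, made applicable by the period estimate $\omega_1\asymp T^{-\frac{1}{2}+O(\delta)}$ and the bound $\log V_1\ll \hat{h}(P_1)$) to show that an integral $nP$ forces $n\ll\log T\sqrt{\log\log T}\log\log\log T$, which overlaps the first range. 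Without an ingredient of this kind, your argument establishes $\I=\emptyset$ for $r\geq 2$ but not the bound $\#|E(\Z)|\leq 2$ when $r=1$.
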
\noindent
Before we prove this lemma, we will prove a preparatory lemma on the coefficients of the division polynomials of $E$. Recall that the denominator of the multiplication-by-$n$ map, $\psi_n(P)^2$, is homogeneous in $x,A,B$ of degree $n^2 - 1$ with the usual grading. Write $$\psi_n(P)^2 =: \sum_{\vec{f}\in \N^3 : f_x + 2f_A + 3f_B = n^2-1} c_{\vec{f}}\cdot x^{f_x} A^{f_A} B^{f_B},$$ with $c_{\vec{f}}\in \Z$. The claim is that these $c_{\vec{f}}$ do not grow too fast as $f_x$ decreases. More precisely,
\begin{lem}\label{coefficient bound}
$$c_{\vec{f}}\ll n^{O(1)} O(1)^{(\log{n})^2\cdot (n^2 - 1 - f_x)}.$$
\end{lem}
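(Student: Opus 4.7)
The plan is to prove this by strong induction on $n$, using the standard division-polynomial recursions
\[
\psi_{2m+1}=\psi_{m+2}\psi_m^3-\psi_{m-1}\psi_{m+1}^3,\qquad 2y\,\psi_{2m}=\psi_m(\psi_{m+2}\psi_{m-1}^2-\psi_{m-2}\psi_{m+1}^2).
\]
Since any $n$ is reached from the base cases in $O(\log n)$ such steps, and each step has the capacity to multiply the codegree-$k$ coefficient bound by something of shape $(\text{poly in }n)^k$, the heuristic size of the final bound is $(\text{poly in }n)^{k\log n}=O(1)^{k(\log n)^2}$, matching the statement. The inductive step will consist of plugging the inductive hypothesis into the recursion, counting convolutions, and absorbing polynomial factors into the codegree slack.

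To keep the bookkeeping clean I would first pass to the pure polynomials $\phi_n\in\Z[x,A,B]$ obtained by dividing out the $y$ factor for even-subscript division polynomials: $\phi_n:=\psi_n$ for odd $n$, and $\phi_n:=\psi_n/(2y)$ for even $n$. A brief parity analysis converts the identities above into polynomial recursions
\[
\phi_{2m+1}=\alpha_m\phi_{m+2}\phi_m^3-\beta_m\phi_{m-1}\phi_{m+1}^3,\qquad \phi_{2m}=\phi_m(\phi_{m+2}\phi_{m-1}^2-\phi_{m-2}\phi_{m+1}^2),
\]
where $\{\alpha_m,\beta_m\}=\{1,\,16(x^3+Ax+B)^2\}$ with the assignment determined by the parity of $m$; in particular $(x^3+Ax+B)^2$ has codegree at most $6$ with coefficients bounded by $2$. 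Since $\psi_n^2=\phi_n^2$ for odd $n$ and $\psi_n^2=4(x^3+Ax+B)\phi_n^2$ for even $n$, a bound of the form $n^{O(1)}O(1)^{k(\log n)^2}$ on the codegree-$k$ coefficients of $\phi_n$ immediately implies the claimed bound for $\psi_n^2$ after one more convolution over $k_1+k_2=k$.

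The inductive step then goes as follows. Let $M(n,k)$ be the maximum absolute value of a coefficient of $\phi_n$ at codegree $k$. Writing the codegree-$k$ coefficient of a product of $r$ polynomials in $x,A,B$ as a convolution over decompositions $k_1+\cdots+k_r=k$, and noting that the number of monomials at any fixed codegree is $O(k)$, the convolution has $O(k^{r-1})$ terms. Plugging the inductive hypothesis into the four-factor product $16(x^3+Ax+B)^2\phi_{m+2}\phi_m^3$ (treating $(x^3+Ax+B)^2$ as a fifth factor of bounded norm) yields
\[
M(2m+1,k)\ll k^{O(1)}\cdot n^{O(1)}\cdot O(1)^{k(\log(m+2))^2},
\]
and the other term gives a bound of the same shape. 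Since $(\log n)^2-(\log(m+2))^2=\Theta(\log m)$ for $m$ large (concretely $\geq(2\log 2)\log m-O(1)$), relative to the target $n^{O(1)}O(1)^{k(\log n)^2}$ there is a reserve factor $O(1)^{\Omega(k\log n)}$ available on the right-hand side; for $k\geq1$ this swallows both the $k^{O(1)}=n^{O(\log\log n)}$ factor and the $n^{O(1)}$ factor, provided the implicit constant inside the codegree exponent is chosen large enough. For $k=0$ the bound reduces to $M(n,0)\leq n^{O(1)}$, which holds because the $x$-leading coefficient of $\phi_n$ is at most $n$. Base cases $n\leq4$ are checked directly from the explicit formulas for $\psi_1,\ldots,\psi_4$.

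The main obstacle is the arithmetic bookkeeping needed to confirm that the transitional discrepancy $(\log n)^2-(\log(m+2))^2$ is genuinely linear in $\log m$ and not merely $o(\log m)$, so that the $k\geq1$ reserve really is strong enough to absorb all of the polynomial-in-$n$ factors introduced at each level of the recursion; this forces a careful coordination of the three implicit constants in the inductive hypothesis (the prefactor $n^{O(1)}$, the codegree base $O(1)$, and the implicit constants in $k^{O(1)}$). Once these are pinned down, the recursion for $\phi_{2m}$ is handled identically, and the lemma follows.
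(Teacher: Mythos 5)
Your proposal is correct and follows essentially the same route as the paper: pass to the $y$-free division polynomials, use the two doubling recursions with a parity analysis that produces the bounded factor $16(x^3+Ax+B)^2$, handle codegree $0$ via the leading coefficient $n$, and run a strong induction in which the gain $(\log n)^2-(\log(m+2))^2\gg\log n$ per unit of codegree absorbs all polynomially bounded factors once the base of the exponential in the codegree is taken large enough. The only slip is the parenthetical ``$k^{O(1)}=n^{O(\log\log n)}$'': since $k\le\deg\phi_n\ll n^2$ one in fact has $k^{O(1)}\le n^{O(1)}$, which only makes the absorption step easier and matches the paper's bookkeeping.
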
\noindent
It is a theorem of Lang that $c_{\vec{f}}\ll O(1)^{n^2}$ in general (which is only weaker for $f_x\geq (1-o(1))n^2$), but this is not enough for our purposes.
\begin{proof}[Proof of Lemma \ref{coefficient bound}.]
Write $$\psi_n(P) =: y^{1-n\bmod{2}} \sum_{\vec{f}\in \N^3: f_x + 2f_A + 3f_B = 2\floor{\frac{n^2-1}{4}}} C_{\vec{f}}\cdot x^{f_x} A^{f_A} B^{f_B}.$$ We will show that $$C_{\vec{f}}\ll n^{O(1)} O(1)^{(\log{n})^2\cdot \left(2\floor{\frac{n^2-1}{4}} - f_x\right)},$$ from which the bound for $c_{\vec{f}}$ follows. That is, we will show that there are absolute constants $K_1, K_2$, and $K_3$ such that, for all $\vec{f}$, \begin{align}C_{\vec{f}}\leq K_1 n^{K_2} K_3^{(\log{n})^2\cdot \left(2\floor{\frac{n^2-1}{4}} - f_x\right)}.\label{bound on C}\end{align}

First choose $K_1 > 1$ so large that for $n\leq 10^{10}$ the bound $|C_{\vec{f}}|\leq K_1$ holds. Take $K_2 = 1$. Take $K_3$ so large that $10^{10} K_1^3 n^{4K_2 + 10} < K_3^{\log{(1.9)}\cdot \log{n}}$ for all $n > 10^{10}$. The bound will then follow by induction. Specifically, recall the recursive formulas for the division polynomials: for odd indices, $$\psi_{2m+1} = \psi_{m+2}\psi_m^3 - \psi_{m-1}\psi_{m+1}^3,$$ and, for even indices, $$\psi_{2m} = \left(\frac{\psi_m}{2y}\right)\left(\psi_{m+2}\psi_{m-1}^2 - \psi_{m-2}\psi_{m+1}^2\right).$$ So suppose we have proved \eqref{bound on C} for all $n' < n$. From the recursions and induction it follows immediately that the leading coefficient of $\psi_n$ is $n$, which satisfies the claimed bound since $K_1 > 1$, $K_2 = 1$. Hence we may assume $$f_x < 2\floor{\frac{n^2-1}{4}}.$$ For $n$ of the form $n =: 4m+1$, using the recursive formula, we find that $$\psi_{4m+1} = -\psi_{2m-1}\psi_{2m+1}^3 + \left(\frac{\psi_{2m+2}}{y}\right)\left(\frac{\psi_{2m}}{y}\right)^3(x^3 + Ax + B)^2.$$ Expanding and applying the induction hypothesis, we find that the coefficient of $x^{f_x} A^{f_A} B^{f_B}$ in $\psi_{4m+1}$ is, in absolute value, at most a sum of at most $n^6$ terms (corresponding to decompositions $\vec{f} = \vec{e}_1 + \cdots + \vec{e}_4$), each at most $$100 K_1^4 n^{4K_2} K_3^{\log{(2m+2)}^2(8m^2 + 4m - f_x)}.$$ But $\log{(2m+2)}\leq \log{n} - \log{(1.9)}$, so that $$\log{(2m+2)}^2\leq (\log{n})^2 - \log{(1.9)}\cdot \log{n}.$$ Inserting this into the inequality and using $f_x < 2\floor{\frac{n^2-1}{4}}$, we find that $$|C_{\vec{f}}|\leq K_1 n^{K_2} K_3^{(\log{n})^2 \left(2\floor{\frac{n^2-1}{4}} - f_x\right)} \left[100 K_1^3 n^{3K_2 + 6} K_3^{-\log{(1.9)}\cdot \log{n}}\right],$$ and the factor in brackets is smaller than $1$ by hypothesis. For $n$ not congruent to $1$ mod $4$ the argument is exactly the same, using the other recursive relation when $n$ is even.
\end{proof}

This finishes our preparations. Let us now prove Lemma \ref{multiples lemma}.
\begin{proof}[Proof of Lemma \ref{multiples lemma}.]
For the first bound, note that if $nP$ is integral for some $n\geq 1$, then $P$ must be integral. To see this, write $P = \left(\frac{x}{d^2}, \frac{y}{d^3}\right)$ in lowest terms and suppose $d > 1$. Then since $$x(nP) = \frac{x\psi_n(P)^2 - \psi_{n+1}(P)\psi_{n-1}(P)}{\psi_n(P)^2}$$ is the quotient of two homogeneous polynomials of degree $n^2$ and $n^2 - 1$, respectively (again, $x,y,A,B$ are given degrees $1,\frac{3}{2},2$, and $3$, respectively) with the numerator having leading term $x^{n^2}$, we see that, on clearing denominators, $$x(nP) = \frac{x^{n^2} + (\in d\Z)}{(\in d\Z)},$$ which is not an integer since $(x,d) = 1$ by hypothesis.

So if $P = P_1$ is not integral we are done for the rank $1$ case. If it is integral, then the claim is that none of its multiples $nP$, $n > 1$, are also integral. Indeed, since $P$ is integral, we find that $h(P) > (5-\delta)\log{T}$ since $E\in \mathcal{F}_*$. 

Let us first show that $nP$ is not integral for $1 < n \ll O(1)^{\sqrt{\log{T}}}$. Of course it suffices to show that the denominator $d_n^2$ in lowest terms of $x(nP)$ is larger than $1$ for these $n$. But Lemma 29 of \cite{stange} (or, equivalently, Proposition 4.2.3 in \cite{mahe}) allows us to do this. Indeed, we find that $$\log{(d_n^2)}\geq \log{(\psi_n(P)^2)} - \frac{n^2}{4}\log{|\Delta|}\geq \log{(\psi_n(P)^2)} - \frac{3n^2}{2}\log{T} - O(1).$$

By Lemma \ref{coefficient bound}, the coefficient of $x^k$ is at most $$\ll n^{O(1)} \left(O(1)^{(\log{n})^2} T\right)^{n^2 - 1 - k}.$$ Hence since $|x(P)|\geq T^{5-\delta}$ is \emph{much} larger than $T$, we find that $\psi_n(P)^2$ is dominated by its top term. Specifically, for $n \ll O(1)^{\sqrt{\log{T}}}$, $$\psi_n(P)^2\geq |x(P)|^{n^2-1}\left(1 - n^{O(1)} O(1)^{(\log{n})^2} T^{-\Omega(1)}\right)\gg |x(P)|^{n^2-1},$$ so that $$\log{(d_n^2)}\geq (n^2-1)h(P) - \frac{3n^2}{2}\log{T} - O(1)\geq \left(9 - O(\delta)\right)\log{T} - O(1),$$ which is positive. Thus $d_n > 1$ and so $x(nP)$ is not integral for $n\ll O(1)^{\sqrt{\log{T}}}$. This in fact completes the first estimate since it shows that no integral point is thrice a rational point in general as well (for this application we could simply use Lang's coefficient bound, of course).

Thus it remains to show that $nP$ is not integral for $n\gg O(1)^{\sqrt{\log{T}}}$. This will follow from David's bounds on linear forms in elliptic logarithms --- in fact we will show that $nP$ is not integral for $n\gg \log{T}\sqrt{\log\log{T}}\log\log\log{T}$. To do this we apply the Corollary of equation (26) in \cite{gebelpethozimmer}. Let us translate their notation into ours. Recall that, for us, $r=1$, so that their $C\ll 1$. Moreover, since our curves have no torsion, their $g=1$. Their $N$ is our $n$. Their $\mu_\infty = \log\max(|A|^{\frac{1}{2}}, |B|^{\frac{1}{3}})\leq \log{T} + O(1)$.

They define the real period $\omega_1$ to be $$\omega_1 := 2\int_\rho^\infty \frac{dx}{\sqrt{x^3 + Ax + B}},$$ where $\rho\in \R$ is the largest real solution of $\rho^3 + A\rho + B = 0$. Let us show that $$T^{-\frac{1}{2}}\ll \omega_1\ll T^{-\frac{1}{2} + O(\delta)}.$$ Let $\rho',\rho''\in \C$ be the other two roots. Since $A$ and $B$ satisfy $$T^{1-O(\delta)}\ll |A|^{\frac{1}{2}}, |B|^{\frac{1}{3}}\ll T,$$ it follows by the reverse triangle inequality that the same bounds hold for $|\rho|, |\rho'|$, and $|\rho''|$. Now the integral over $[10^{10}T, \infty)$ is $\asymp T^{-\frac{1}{2}}$ since $x^3 + Ax + B\gg |x|^3$ there. Hence, since the integrand is positive, the lower bound on $\omega_1$ follows. For the upper bound, we split into cases. If $\rho',\rho''$ are not real, then $\Re(\rho') = \Re(\rho'') = -\frac{\rho}{2}$ and $\Im(\rho') = -\Im(\rho'') = \frac{\rho' - \rho''}{2}$. In this case, on $(\rho, 10^{10}T)$ $$x^3 + Ax + B\gg (x-\rho) |\rho' - \rho''|^2.$$ If $\rho',\rho''$ are real, then on $(\rho, 10^{10}T)$ $$x^3 + Ax + B = (x-\rho)(x-\rho')(x-\rho'')\geq (x-\rho)(\rho - \rho')(\rho - \rho'').$$ Since the discriminant of $x^3 + Ax + B$ is $\gg T^{6 - O(\delta)}$, applying Mahler's bound on the bottom of page 261 in \cite{mahler}, in both cases it follows that $$x^3 + Ax + B\gg (x-\rho)\cdot T^{2 - O(\delta)}$$ on the interval. Hence the integral over the interval is $$\int_\rho^{10^{10}T} \frac{dx}{\sqrt{x^3 + Ax + B}}\ll T^{-1 + O(\delta)}\int_\rho^{10^{10}T} \frac{dx}{\sqrt{x-\rho}}\ll T^{-\frac{1}{2} + O(\delta)},$$ completing the argument.

It follows that their $c_1'\gg T^{\frac{1}{2} - O(\delta)}$. Note also that their $h\ll \log{T}$. The bound $|\rho|, |\rho'|, |\rho''|\ll T$ implies that their $\xi_0\ll T$. Finally, we turn to the expression $\frac{3\pi |u_1|^2}{\omega_1^2 \Im(\tau)}$ defining their $\log{V_1}$. Since we may take $\tau$ in the classical fundamental domain for $\SL_2(\Z)$ acting on the upper half plane, we have $\Im(\tau)\gg 1$. Now, $u_1$, the elliptic logarithm of our $P = P_1 =: (\xi,\eta)$, satisfies $$u_1 = \frac{1}{\omega_1}\int_\xi^\infty \frac{dx}{\sqrt{x^3 + Ax + B}}\ll \xi^{-\frac{1}{2}} T^{\frac{1}{2} + O(\delta)}.$$ Thus $$\frac{3\pi |u_1|^2}{\omega_1^2 \Im(\tau)}\ll \frac{|u_1|^2}{\omega_1^2}\ll |x(P)|^{-1} T^{2 + O(\delta)}.$$ But $|x(P)|\gg T^{5-\delta}$ implies that this is $$\ll T^{-3 + O(\delta)}.$$ Therefore their $\log{V_1}$ satisfies $$\log{V_1}\ll \hat{h}(P_1).$$ Finally, their $\lambda_1 = \hat{h}(P_1)$ in the rank one case.

This completes the translation of their notation. Their Corollary now reads (since certainly any integral point $P'$ satisfies the hypothesis of their Proposition, which is $x(P')\gg T$ --- $x(P')$ is positive since $x(P')^3 + A x(P') + B$ is):
\begin{cor}[Cf.\ equation (26) of \cite{gebelpethozimmer}.]
For $E\in \mathcal{F}_*$ of rank one and generator $P = P_1$, if $nP$ is integral and $n\gg 1$, then $$n^2\ll (\log{T})^2 \log{n} (\log\log{n})^2.$$
\end{cor}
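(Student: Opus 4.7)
The plan is to apply the Corollary to equation (26) of Gebel--Petho--Zimmer \cite{gebelpethozimmer} directly, using the translation of their auxiliary parameters into ours that has already been set up in the paragraphs above. Their Corollary packages David's theorem on linear forms in elliptic logarithms: for $P' = nP$ an integral multiple of the rank-one generator with $x(P') \gg T$, it outputs an upper bound on $n^2 \lambda_1$ in terms of $\log V_1$, $h$, $\log c_1'$, $\mu_\infty$, and $\log n$, with David's characteristic $\log n \cdot (\log\log n)^2$ dependence in the rank-one setting.

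First, I would verify that the hypothesis $x(nP) \gg T$ of GPZ's Proposition holds automatically on our family: the condition $E \in \mathcal{F}_*$ forces $h(nP) > (5-\delta)\log T$, hence $|x(nP)| \geq T^{5-\delta} \gg T$. Moreover $x(nP)$ must be positive, since for $|x|$ much larger than $|A|^{1/2}$ and $|B|^{1/3}$ the sign of $x^3 + Ax + B = y(nP)^2 \geq 0$ agrees with the sign of $x$.

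Next, I would substitute the translations $\lambda_1 = \hat{h}(P_1)$, $\log V_1 \ll \hat{h}(P_1)$, $h \ll \log T$, $\log c_1' \ll \log T$, and $\mu_\infty \ll \log T$ into the GPZ estimate. The factor $\lambda_1$ on the left and the dominant piece of $\log V_1$ on the right cancel (using $\hat{h}(P_1) \gg 1$, which holds uniformly on $\mathcal{F}_*$ by the restriction $h(P_1) > (\tfrac12 - \delta)\log T$ together with Lemma \ref{weil vs canonical}). The two remaining height-like parameters $h$ and $\log c_1'$ each contribute one factor of $\log T$, producing the stated $(\log T)^2$, while David's $\log n \cdot (\log\log n)^2$ dependence carries through unchanged. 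Assembling gives $n^2 \ll (\log T)^2 \log n \, (\log\log n)^2$, as claimed.

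The main obstacle is purely bookkeeping: GPZ's statement involves many auxiliary parameters, and one must carefully match their $C, g, N, V_1, \lambda_1, c_1', \mu_\infty, h$ to the quantities computed above and check that all lower-order error terms in their bound are absorbable into the target estimate (which is straightforward given $\hat{h}(P_1) \gg 1$ and the various $O(\delta)$-type savings recorded earlier). There is no genuinely new mathematical content here—the corollary is in essence a direct quotation of GPZ (26) specialized to our normalization and to the restricted family $\mathcal{F}_*$.
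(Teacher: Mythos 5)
Your proposal matches the paper's own treatment: the paper likewise verifies the hypothesis of the Gebel--Peth\H{o}--Zimmer Proposition (that $x(nP)\gg T$ and is positive, which follows on $\mathcal{F}_*$ exactly as you argue) and then quotes their Corollary (26) after the notational translation $\lambda_1=\hat{h}(P_1)$, $\log V_1\ll\hat{h}(P_1)$, $h,\mu_\infty\ll\log T$, $c_1'$ controlled via the two-sided bound on $\omega_1$, with the $\hat{h}(P_1)$ factors cancelling and the remaining parameters supplying the $(\log T)^2$. This is the same argument, at essentially the same level of detail, as the paper's.
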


It follows that, if $nP$ is integral, then $n\ll \log{T}\sqrt{\log\log{T}}\log\log\log{T}$. Since we have already shown that if $n > 1$ then $n\gg O(1)^{\sqrt{\log{T}}}$, this completes the argument.
\end{proof}

Note that we have now completely handled the cases of $\rank(E) = 0$ or $1$. Hence from now on we may assume $\rank(E)\geq 2$.

\subsection{$\II$ is small: integral points repel in the Mordell-Weil lattice}

Let $1 < J < 2$ be a parameter which we will choose at the end ($J$ will depend on $r$ for $r\ll 1$). Write $J =: 2\cos{\theta}$. We encode the fact that integral points repel in the Mordell-Weil lattice with the following lemma.
\begin{lem}
$$\#|\II_D^{(i)}|\leq 2\left\lceil\frac{\log{\tilde{D}}}{\log{J}}\right\rceil \max_{S\subseteq \RP^{r-1} : \forall v\neq w\in S, |\langle v, w\rangle|\leq \cos{\theta} + O(\delta)} \#|S|.$$
\end{lem}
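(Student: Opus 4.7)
The plan is to cover the range of canonical heights appearing in $\II_D^{(i)}$ by thin shells, apply the Helfgott-Mumford gap principle (Lemma \ref{lower bound on angle}) within each shell to produce a configuration of projective unit vectors with pairwise inner products controlled by $\cos\theta+O(\delta)$, and sum over the shells.

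First I would partition the canonical-height interval $[H_i,\tilde D^2 H_i]$ into shells of multiplicative width $J^2$, namely $[J^{2k}H_i,J^{2(k+1)}H_i]$ for $k=0,\dots,K-1$ with $K=\lceil \log\tilde D/\log J\rceil$. Inside a single such shell I consider the map
$$\pi:P\longmapsto \left[\frac{P}{\sqrt{\hat h(P)}}\right]\in (E(\Q)/\mathrm{tors})\otimes_\Z\R/\{\pm 1\}\cong \RP^{r-1}.$$
The plan has two pieces: bound the fiber of $\pi$ and bound the pairwise inner products in its image.

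For the fiber bound, if $\pi(P)=\pi(R)$ then $P$ and $R$ are proportional in the Mordell-Weil lattice modulo torsion, so $P=\pm kR$ for some $k\in\Z^+$; then $\hat h(P)/\hat h(R)=k^2\in[J^{-2},J^2]$ forces $k=1$ because $J<2$, so $P=\pm R$ and the fiber has size at most $2$. For the inner-product bound, suppose $P\neq \pm R$ lie in the same shell. Since $H_i\geq (5-O(\delta))\log T$ and since Lemma \ref{weil vs canonical} gives $\hat h-h=O(\delta\log T)$ for integral points (for which $|x|\geq |\Delta|^{1/6}$ under our restriction to $\mathcal{F}_*$), the ratio of Weil heights satisfies $h(P)/h(R)=(1+O(\delta))\hat h(P)/\hat h(R)\in[J^{-2}(1+O(\delta)),J^2(1+O(\delta))]$. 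Applying the gap principle of Lemma \ref{lower bound on angle} therefore yields
$$\cos\theta_{P,R}\leq \tfrac{1}{2}\max\bigl(\sqrt{h(P)/h(R)},\sqrt{h(R)/h(P)}\bigr)+O(\delta)\leq \tfrac{J}{2}+O(\delta)=\cos\theta+O(\delta).$$
Applying the same inequality to the pair $(P,-R)\in \II_D^{(i)}\times\II_D^{(i)}$ (which still lies in the same shell, since $\hat h(-R)=\hat h(R)$) gives $-\cos\theta_{P,R}\leq \cos\theta+O(\delta)$, whence $|\cos\theta_{P,R}|\leq \cos\theta+O(\delta)$. This is exactly the statement that $\pi(P)$ and $\pi(R)$, viewed in $\RP^{r-1}$, have $|\langle\cdot,\cdot\rangle|\leq \cos\theta+O(\delta)$. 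Hence the image of $\pi$ restricted to one shell has size at most the spherical-code maximum $M$ in the statement, and the shell contributes at most $2M$ to $\#|\II_D^{(i)}|$. Summing over the $K$ shells completes the proof.

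The main technical subtlety I expect is the transition from canonical to Weil heights when invoking Lemma \ref{lower bound on angle}: the gap principle is phrased in terms of $h$ while the shells are defined in terms of $\hat h$. This is the reason we needed to restrict to $\mathcal{F}_*$ in the first place, and the $O(\delta\log T)$ bound of Lemma \ref{weil vs canonical} combined with the lower bound $\hat h\geq H_i\gg \log T$ on points in $\II_D^{(i)}$ shows the ratios agree up to a factor of $1+O(\delta)$, which is precisely what is needed so the perturbation $O(\delta)$ in the cosine bound absorbs cleanly.
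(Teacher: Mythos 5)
Your proof is correct and follows essentially the same route as the paper: cover the height range by $\left\lceil\frac{\log{\tilde{D}}}{\log{J}}\right\rceil$ shells of ratio $J^2$, apply the gap principle inside a shell (to both $(P,R)$ and $(P,-R)$ to get the absolute value), and project to $\RP^{r-1}$; your explicit $\hat{h}$-versus-$h$ bookkeeping is exactly the intended use of Lemma \ref{weil vs canonical}. One small slip: your fiber argument claims $\pi(P)=\pi(R)$ forces $P=\pm kR$ with $k\in\Z^+$, but proportional lattice points need only be integer multiples of a common primitive vector (e.g.\ $3v$ and $2v$), so the step ``$\hat{h}(P)/\hat{h}(R)=k^2\in[J^{-2},J^2]$ forces $k=1$'' does not literally apply; this is harmless, because your own bound $|\cos{\theta_{P,R}}|\leq\cos{\theta}+O(\delta)<1$ for $P\neq\pm R$ in a shell already shows the map is injective modulo sign, which is precisely how the paper argues injectivity.
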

We will bound the maximum occurring in this bound with a bound on codes in $\RP^n$ via linear programming techniques for $n\ll 1$ and a simpleminded volume estimate for $n\gg 1$.
\begin{proof}
It suffices to prove that the number of points with height in an interval $[m,M]$ is $$\leq 2\left\lceil\frac{\log{\left(\frac{M}{m}\right)}}{2\log{J}}\right\rceil \max_{S\subseteq \RP^{r-1} : \forall v\neq w\in S, |\langle v, w\rangle|\leq \cos{\theta} + O(\delta)} \#|S|.$$ To see this, note that $$[m,M]\subseteq \bigcup_{i=1}^{\left\lceil\frac{\log{\left(\frac{M}{m}\right)}}{2\log{J}}\right\rceil} [m (J^2)^i, m (J^2)^{i+1}],$$ so that it suffices to prove this bound for an interval of the form $[m, J^2m]$. But now if $h(R)\leq h(P)\leq J^2 h(R)$, then by Lemma \ref{helfgott-mumford gap principle}, $$\cos{\theta_{P,R}}\leq \frac{J}{2} + O(\delta) = \cos{\theta} + O(\delta).$$ Therefore the map $\{P\in E(\Z) : h(P)\in [m, J^2 m]\}/\pm\to \RP^{r-1}$ via $\pm P\mapsto \{\pm P\otimes \frac{1}{\sqrt{\hat{h}(P)}}\}$ (the projection to $\RP^{r-1}$ of the nonzero point $P\in \R^r\iso E(\Q)\otimes_\Z \R$) is injective (since $\cos{\theta_{P,R}} < 1$ if $P\neq R$ once $\delta\ll_J 1$). Moreover the image satisfies the condition that for every $v\neq w$ in the image, $|\langle v, w\rangle| = \cos{\theta_{v,w}}\leq \cos{\theta} + O(\delta)$, as desired. This completes the proof of the second bound.
\end{proof}

\subsection{$\III$ is small and $\IV$ is empty: an explicit bivariate Roth's Lemma}

For $\III_D^{(i,\vec{a},\tilde{R})}$ and $\IV_D^{(i,\vec{a})}$ we will follow Siegel's proof of his finiteness theorem. Write $C := (5-\delta)\tilde{D}^2$, so that for every $P\in \III_D^{(i,\vec{a})}$ we have $h(P) > C\log{T}$. Note also that
\begin{lem}\label{height of R is small compared to P}
Let $P\in \III_D^{(i,\vec{a})}$. Then: $$h(R_{\vec{a}}), \hat{h}(R_{\vec{a}})\leq \left(\frac{1}{D^2} + O(\delta)\right) h(P).$$
\end{lem}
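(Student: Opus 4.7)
The plan is straightforward: use the lattice-basis bound \eqref{bound on Rs} together with the rapid decay of $H_j$ implicit in the definition $H_i \geq \tilde{D}^2 H_{i-1}$, then convert between the canonical and Weil heights using Lemma \ref{weil vs canonical}. Here is how I would carry it out.

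First, since $R_{\vec{a}} = \sum_{j=1}^i a_j P_j$ with $a_j \in \{-1,0,1\}$, applying \eqref{bound on Rs} gives
\[
\hat{h}(R_{\vec{a}}) \leq \sum_{j=1}^i (i-j+1)\,\hat{h}(P_j).
\]
Next, I would iterate the relation $H_j \geq \tilde{D}^2 H_{j-1}$ to obtain $H_j \leq \tilde{D}^{-2(i-j)} H_i$ for $j \leq i$. Combined with $\hat{h}(P_j) \leq H_j$ and the defining inequality $\hat{h}(P) > \tilde{D}^2 H_i$ for $P \in \III_D^{(i,\vec{a})}$, this yields $\hat{h}(P_j) \leq \tilde{D}^{-2(i-j+1)} \hat{h}(P)$. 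Re-indexing with $k := i-j+1$ and extending the sum to infinity,
\[
\hat{h}(R_{\vec{a}}) \leq \hat{h}(P) \sum_{k=1}^{i} k\,\tilde{D}^{-2k} \leq \hat{h}(P) \cdot \frac{\tilde{D}^{-2}}{(1-\tilde{D}^{-2})^2} = \hat{h}(P) \cdot \frac{\tilde{D}^2}{(\tilde{D}^2-1)^2} = \frac{\hat{h}(P)}{D^2},
\]
where the final equality is exactly the defining relation \eqref{def of tilde D} of $\tilde{D}$.

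Finally, I would pass from $\hat{h}$ to $h$. Since $P$ is integral with $h(P) > (5-\delta)\log T$, we have $|x(P)| \geq T^{5-\delta}$, which comfortably exceeds $|\Delta|^{1/6} \ll T$; hence the second clause of Lemma \ref{weil vs canonical} gives $\hat{h}(P) = h(P) + O(\delta \log T)$. The general inequality $h(Q) \leq \hat{h}(Q) + O(\delta \log T)$ from the same lemma, applied with $Q = R_{\vec{a}}$, gives
\[
h(R_{\vec{a}}) \leq \hat{h}(R_{\vec{a}}) + O(\delta \log T) \leq \frac{1}{D^2}\bigl(h(P) + O(\delta \log T)\bigr) + O(\delta \log T).
\]
Using $h(P) > (5-\delta)\log T$ to absorb the error terms $O(\delta \log T)$ into $O(\delta)\, h(P)$ yields the desired bound on $h(R_{\vec{a}})$, and the same chain (without the last conversion step) gives the bound on $\hat{h}(R_{\vec{a}})$. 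There is no real obstacle — the only subtlety is keeping track of the index shift in the geometric sum, which is precisely arranged so that \eqref{def of tilde D} produces the clean constant $1/D^2$.
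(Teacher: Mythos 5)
Your proposal is correct and follows essentially the same route as the paper: the bound \eqref{bound on Rs}, the geometric decay of the $H_j$ coming from the definition of $\III_D^{(i)}$, the evaluation $\sum_{k\geq 1} k\,\tilde{D}^{-2k}\leq \tilde{D}^2/(\tilde{D}^2-1)^2 = 1/D^2$ via \eqref{def of tilde D}, and finally Lemma \ref{weil vs canonical} to pass between $\hat{h}$ and $h$. The only cosmetic difference is that the paper converts $\hat{h}(P)$ to $h(P)$ inside the chain of inequalities rather than at the end, which changes nothing.
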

\begin{proof}
Observe that
\begin{align*}
\hat{h}\left(\sum_{j=1}^i a_j P_j\right)&\leq \sum_{j=1}^i (i-j+1)\hat{h}(P_i)
\\&\leq \sum_{j=1}^i \frac{i-j+1}{\tilde{D}^{2(i-j+1)}} \hat{h}(P)
\\&= \left(\sum_{j=1}^i j \tilde{D}^{-2j}\right)(1 + O(\delta)) h(P),
\end{align*}
where the first step follows from \eqref{bound on Rs} and the second follows from the definition of $\III_D^{(i)}$, plus Lemma \ref{weil vs canonical}. But in general $$\sum_{\ell=1}^k \ell x^{-\ell}\leq \frac{x}{(x-1)^2},$$ so that we find that $$h(R_{\vec{a}})\leq \left(\frac{1}{D^2} + O(\delta)\right) h(P)$$ by \eqref{def of tilde D} and Lemma \ref{weil vs canonical}, as desired.
\end{proof}

Having established this estimate, let us now prove:
\begin{lem}\label{the famous previous lemma}
Let $P\in \III_D^{(i,\vec{a})}$. Then:
\begin{align*}
\frac{\log{\prod_{3\tilde{R} = -R_{\vec{a}}} \left|x(Q) - x(\tilde{R})\right|^{-1}}}{h(P)}\geq \frac{1}{2} - \max\left(\frac{19\log{T}}{h(P)}, \frac{19}{D^2}\right) - O(\delta),
\end{align*}
and
\begin{align*}
\left(1 + D^{-1} + O(\delta)\right)^{-2}\leq \frac{h(P)}{9h(Q)}\leq\left(1 - D^{-1} - O(\delta)\right)^{-2}.
\end{align*}
\end{lem}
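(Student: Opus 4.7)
The lemma splits into two estimates, each leveraging the identity $P = 3Q + R_{\vec{a}}$. For the second (the ratio $h(P)/(9h(Q))$), the plan is to apply the triangle inequality in the Euclidean space $E(\Q) \otimes_\Z \R$ equipped with the norm $\sqrt{\hat{h}}$: this immediately yields $\left|\sqrt{\hat{h}(P)} - 3\sqrt{\hat{h}(Q)}\right| \leq \sqrt{\hat{h}(R_{\vec{a}})}$. Plugging in $\sqrt{\hat{h}(R_{\vec{a}})} \leq (D^{-1} + O(\delta))\sqrt{h(P)}$ from Lemma \ref{height of R is small compared to P}, together with the swaps $\sqrt{\hat{h}} = (1 + O(\delta))\sqrt{h}$ on both $P$ and $Q$ from Lemma \ref{weil vs canonical} (valid because $h(P) \geq (5-\delta)\tilde{D}^2 \log T \gg \log T$, so the $\log T$-sized discrepancy in Lemma \ref{weil vs canonical} is absorbed into $O(\delta)$), then squaring and rearranging gives the stated bound.

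The first estimate requires more work. The plan is to observe that the 9 values $\{x(\tilde{R})\}_{3\tilde{R} = -R_{\vec{a}}}$ are the roots (in $\Qbar$) of the monic polynomial
$$F(X) := (X - x(R_{\vec{a}}))\Psi_3(X)^2 - 2(X^3 + AX + B)\Phi_4(X) \in \Q[X],$$
where $\Psi_3(X), \Phi_4(X) \in \Z[A,B][X]$ are defined by $\psi_3 = \Psi_3(x)$ and $\psi_4 = y\Phi_4(x)$; the triplication formula for $x$ moreover yields the simultaneous factorization
$$F(x(Q)) = (x(3Q) - x(R_{\vec{a}}))\psi_3(Q)^2 = \prod_{3\tilde{R} = -R_{\vec{a}}}(x(Q) - x(\tilde{R})).$$
So it suffices to bound $|F(x(Q))|$ from above. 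For the first factor, the secant addition formula for $P = 3Q + R_{\vec{a}}$ gives $(x(P) + x(3Q) + x(R_{\vec{a}}))(x(3Q) - x(R_{\vec{a}}))^2 = (y(3Q) - y(R_{\vec{a}}))^2$; since $|x(P)| = e^{h(P)}$ is enormous while the right-hand side is $\leq O(\max(|x(R_{\vec{a}})|, T)^3)$ via $y^2 = x^3 + Ax + B$, this yields $\log|x(3Q) - x(R_{\vec{a}})| \leq -h(P)/2 + (3/2)\max(h(R_{\vec{a}}), \log T) + O(1)$.

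For the second factor, $\psi_3(Q)^2 \leq O(\max(|x(Q)|, T)^8)$, so one needs to bound the archimedean absolute value $|x(Q)|$ (crucially, not the height $H(Q)$, which would be far too large). Here the key geometric input is that $P$ integral with $|x(P)|$ enormous is analytically close to the origin on $E(\R)$: its real elliptic logarithm $u_P$ satisfies $|u_P| \asymp |x(P)|^{-1/2}$. Since $3Q = P - R_{\vec{a}}$ and multiplication-by-$3$ on $E(\R)$ is surjective, one can choose a real preimage $\tilde{R}_0 \in E(\R)$ of $-R_{\vec{a}}$ with $|u_Q - u_{\tilde{R}_0}| = |u_P|/3$; integrating the invariant differential $dx = 2y\, du$ along the real arc then gives $|x(Q) - x(\tilde{R}_0)| \leq O(|y(\tilde{R}_0)| \cdot |u_P|) \ll 1$. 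A Mahler-measure bound, combined with $\hat{h}(\tilde{R}_0) = \hat{h}(R_{\vec{a}})/9$ and $[\Q(x(\tilde{R}_0)) : \Q] \leq 9$, yields $|x(\tilde{R}_0)| \leq e^{9 h(x(\tilde{R}_0))} \leq e^{h(R_{\vec{a}}) + O(\delta \log T)}$; hence $\log|x(Q)| \leq \max(h(R_{\vec{a}}), \log T) + O(\delta\log T)$ and $\log \psi_3(Q)^2 \leq 8\max(h(R_{\vec{a}}), \log T) + O(\delta\log T)$. Summing the two logarithmic bounds, absorbing $O(\log T + 1)$ into $O(\delta h(P))$ (using $h(P) \gg \log T$), and plugging in $h(R_{\vec{a}}) \leq (D^{-2} + O(\delta))h(P)$ from Lemma \ref{height of R is small compared to P} gives the claim, with some slack relative to the stated constant $19$.

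The main obstacle is the archimedean bound on $|x(Q)|$ via the real elliptic logarithm: one must handle the one or two possible connected components of $E(\R)$ and the possible existence of nontrivial real $3$-torsion when picking $\tilde{R}_0$, establish $|u_P| \asymp |x(P)|^{-1/2}$ uniformly for $E \in \mathcal{F}_*$ with explicit constants, and carry out the integration $|x(Q) - x(\tilde{R}_0)| \leq O(|y(\tilde{R}_0)| \cdot |u_P|)$ carefully enough to also legitimize the tacit dominance of $|x(P)|$ over $|x(3Q)| + |x(R_{\vec{a}})|$ in the secant-addition bound of the second paragraph (which itself only becomes rigorous once one knows $|x(3Q)|$ is close to $|x(R_{\vec{a}})|$, a consequence of the same elliptic-log geometry).
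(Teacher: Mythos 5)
Your second inequality is argued exactly as in the paper: the triangle inequality for $\sqrt{\hat{h}}$ applied to $P=3Q+R_{\vec{a}}$, Lemma \ref{height of R is small compared to P}, and Lemma \ref{weil vs canonical} to trade $h$ for $\hat{h}$ (for $Q$ only the unconditional direction $h\leq \hat{h}+O(\delta\log T)$ is available, but the resulting discrepancy of size $\approx \log T$ is negligible against $\hat{h}(Q)\gg \tilde{D}^2\log T$, just as in the paper's own ``same argument'' step). For the first inequality your skeleton also matches the paper's: the factorization $\prod_{3\tilde{R}=-R_{\vec{a}}}\bigl(x(Q)-x(\tilde{R})\bigr)=\psi_3(Q)^2\bigl(x(3Q)-x(R_{\vec{a}})\bigr)$ (your monic $F$; the paper's extra factor of $9$ is immaterial), a bound on $|x(3Q)-x(R_{\vec{a}})|$ exploiting that $x(P)=e^{h(P)}$ is enormous, and a bound on $\psi_3(Q)^2$ coming from a bound on $|x(Q)|$; your bookkeeping would even yield $19/2$ in place of $19$.

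The gap is that the two estimates on which everything rests --- $|x(3Q)|\ll \max(|x(R_{\vec{a}})|,T)$, needed to legitimize the secant-formula step, and $|x(Q)|\ll \max(|x(R_{\vec{a}})|,T)$, needed to bound $\psi_3(Q)^2$ --- are not proved: you defer them to a real elliptic-logarithm argument that you yourself flag as the main obstacle, and which as sketched requires several further unestablished inputs (uniform $|u_P|\asymp |x(P)|^{-1/2}$ over $\mathcal{F}_*$, the choice of a real cube root $\tilde{R}_0$ of $-R_{\vec{a}}$ across the components of $E(\R)$ and possible real $3$-torsion, the integration bound, and the inequality $9h(x(\tilde{R}_0))\leq h(R_{\vec{a}})+O(\delta\log T)$, which is a Weil-versus-canonical comparison at an algebraic point of degree up to $9$ and is not covered by Lemma \ref{weil vs canonical}, stated only for rational points). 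None of this machinery is needed: the paper obtains both bounds in a few lines of algebra. Writing $x(3Q)=x(P-R_{\vec{a}})$ and expanding the addition formula as in Lemma \ref{helfgott-mumford gap principle}, the denominator is $(x(P)-x(R_{\vec{a}}))^2\gg x(P)^2$ while the numerator is $\ll x(P)^2\bigl(|x(R_{\vec{a}})|+T\bigr)$, giving $|x(3Q)|\ll |x(R_{\vec{a}})|+T$; and if $|x(Q)|\geq 10^{10}T$ then in the triplication formula $x(3Q)=\phi_3(x(Q))/\psi_3(Q)^2$ both numerator and denominator are dominated by their leading terms, so $|x(3Q)|\gg |x(Q)|$ and the same bound follows for $|x(Q)|$. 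If you insist on the analytic route you must actually carry out the deferred steps (and for $|x(\tilde{R}_0)|$ you could instead simply bound the roots of your monic $F$ by its coefficients); as submitted, the central archimedean estimates are asserted rather than proven.
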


\begin{proof}
Observe that
\begin{align}
\frac{1}{2} &= \frac{\log{|x(P)|^{\frac{1}{2}}}}{h(P)}\nonumber\\&= \frac{\log{|x(3Q + R_{\vec{a}})|^{\frac{1}{2}}}}{h(3Q + R_{\vec{a}})}.\label{expression for one half}
\end{align}\noindent
Let us examine the numerator and denominator of this expression.

First, the denominator. Note that
\begin{align*}
\sqrt{h(P)} &= \sqrt{h(3Q + R_{\vec{a}})}\\&\geq \sqrt{\hat{h}(3Q + R_{\vec{a}}) - O(\delta\log{T})}
\\&= \sqrt{\hat{h}(3Q+R_{\vec{a}})}\left(1 - O(\delta)\right)
\\&\geq \left(3\sqrt{\hat{h}(Q)} - \sqrt{\hat{h}(R_{\vec{a}})}\right)\left(1 - O(\delta)\right)
\\&\geq \left(3\sqrt{h(Q)} - \frac{\sqrt{h(P)}}{D}\right)\left(1 - O(\delta)\right),
\end{align*}
where we have used the triangle inequality for $\sqrt{\hat{h}}$ and Lemma \ref{height of R is small compared to P}.

Therefore
\begin{align*}
\sqrt{h(P)}&\geq 3\sqrt{h(Q)}\left(1 + D^{-1} + O(\delta)\right)^{-1}.
\end{align*}
The same argument works to prove that
\begin{align*}
\sqrt{h(Q)}\geq \frac{1}{3}\sqrt{h(P)}\left(1 - D^{-1} - O(\delta)\right)
\end{align*}
as well. This proves the second statement of the Lemma.

Now we move to the numerator in \eqref{expression for one half}. Observe that $$x(3Q) = \frac{x(Q) \psi_3(Q)^2 - \psi_2(Q)\psi_4(Q)}{\psi_3(Q)^2}.$$ Note also that
\begin{align*}
9\prod_{3\tilde{R} = -R_{\vec{a}}} (x(Q) - x(\tilde{R})) &= \psi_3(Q)^2\left(x(Q) - \frac{\psi_2(Q)\psi_4(Q)}{\psi_3(Q)^2} - x(R_{\vec{a}})\right) \\&= \psi_3(Q)^2\left(x(3Q) - x(R_{\vec{a}})\right),
\end{align*} since both are polynomials in $x(Q)$ of degree $9$ with leading coefficient $9$ and roots exactly at $x(Q) = x(\tilde{R})$ for some $\tilde{R}$ with $3\tilde{R} = -R_{\vec{a}}$. But then, since in general $$x(W + Z) = \frac{(y(W) - y(Z))^2 - (x(W) + x(Z))(x(W) - x(Z))^2}{(x(W) - x(Z))^2},$$ we have that
\begin{align*}
&x(3Q+R_{\vec{a}})\cdot \left(9\prod_{3\tilde{R} = -R_{\vec{a}}} (x(Q) - x(\tilde{R}))\right)^2 \\&\quad\quad= \psi_3(Q)^4 \left((y(3Q) - y(R_{\vec{a}}))^2 - (x(3Q) + x(R_{\vec{a}}))(x(3Q) - x(R_{\vec{a}}))^2\right).
\end{align*}

Now from the equation $y^2 = x^3 + Ax + B$, we find that $y\ll (|x| + T)^{\frac{3}{2}}$ in general. Also,
\begin{align*}
|x(R_{\vec{a}})|&\leq \exp(h(R_{\vec{a}}))\\&\leq \exp\left(\frac{h(P)}{D^2}(1 + O(\delta))\right)\\&= |x(P)|^{\frac{1}{D^2} + O(\delta)}.
\end{align*}

Therefore, as we saw in the proof of Lemma \ref{helfgott-mumford gap principle}, by writing out the numerator and denominator, $|x(3Q)| = |x(P - R_{\vec{a}})|\ll |x(R_{\vec{a}})|$ since $x(R_{\vec{a}})$ is much smaller than $x(P)$ in absolute value. But if $|x(Q)|\geq 10^{10}T$, then $|x(3Q)|\gg |x(Q)|$ since it is a quotient of two polynomials dominated by their leading terms. Therefore we find that in general $|x(Q)|, |x(3Q)|\ll |x(R_{\vec{a}})| + T$ and so  $|y(Q)|, |y(3Q)|\ll (|x(R_{\vec{a}})| + T)^{\frac{3}{2}}$.

Therefore
\begin{align*}
&\left|x(3Q+R_{\vec{a}})\cdot \left(9\prod_{3\tilde{R} = -R_{\vec{a}}} (x(Q) - x(\tilde{R}))\right)^2\right| \\&\quad\quad= \left|\psi_3(Q)^4 \left((y(3Q) - y(R_{\vec{a}}))^2 - (x(3Q) + x(R_{\vec{a}}))(x(3Q) - x(R_{\vec{a}}))^2\right)\right|\\&\quad\quad\ll (|x(R_{\vec{a}})| + T)^{19}\\&\quad\quad\ll \max\left(T^{19}, |x(P)|^{\frac{19}{D^2} + O(\delta)}\right).
\end{align*}

Written another way,
\begin{align*}
\log{|x(3Q + R_{\vec{a}})|^{\frac{1}{2}}}\leq \log{\prod_{3\tilde{R} = -R_{\vec{a}}} \left|x(Q) - x(\tilde{R})\right|^{-1}} + \max\left(19\log{T}, \frac{19h(P)}{D^2} + O(\delta)\right) + O(1).
\end{align*}

Therefore, returning to \eqref{expression for one half}, we find that:
\begin{align*}
\frac{1}{2} &\leq \frac{\log{\prod_{3\tilde{R} = -R_{\vec{a}}} \left|x(Q) - x(\tilde{R})\right|^{-1}}}{h(P)} + \max\left(\frac{19\log{T}}{h(P)}, \frac{19}{D^2}\right) + O(\delta).
\end{align*}\noindent
This completes the proof.
\end{proof}

Let us now show that, once $D$ is suitably chosen, $\IV_D^{(i,\vec{a})}$ is empty. (Recall that $C = (5-\delta)\tilde{D}^2$.)
\begin{lem}
Suppose $$\frac{576}{C} + \frac{72}{D^2} + \max\left(\frac{19}{C}, \frac{19}{D^2}\right) < \frac{1}{2}.$$ Then $\IV_D^{(i,\vec{a})} = \emptyset$.
\end{lem}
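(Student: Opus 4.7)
The plan is to derive a contradiction from the existence of any $P\in\IV_D^{(i,\vec{a})}$ by producing an upper bound on $\log\prod_{\tilde R}|x(Q)-x(\tilde R)|^{-1}/h(P)$ incompatible with the lower bound supplied by Lemma~\ref{the famous previous lemma}. So suppose $P\in\IV_D^{(i,\vec{a})}$, write $P=3Q+R_{\vec{a}}$, enumerate the nine preimages of $-R_{\vec{a}}/3$ as $\tilde R_1,\dots,\tilde R_9$, and order the distances $d_k:=|x(Q)-x(\tilde R_k)|$ increasingly as $d_1\leq\cdots\leq d_9$. Inspecting the defining condition of $\IV_D^{(i,\vec{a})}$ on each $\tilde R_k$ shows that the only nontrivial requirement is $d_1>d_2/2$: for $k\geq 2$ the condition reduces to $d_k>d_1/2$, which is automatic since $d_k\geq d_1$.

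Combined with the triangle inequality this yields $d_k\geq|x(\tilde R_k)-x(\tilde R_1)|/2$ for $k\geq 2$ and $d_1\geq|x(\tilde R_1)-x(\tilde R_2)|/4$, hence
\[
\log\prod_{k=1}^9 d_k^{-1}\leq 2\log|x(\tilde R_1)-x(\tilde R_2)|^{-1}+\sum_{k\geq 3}\log|x(\tilde R_k)-x(\tilde R_1)|^{-1}+O(1).
\]
To control the pairwise differences, I would exploit the fact that $x(\tilde R_1),\dots,x(\tilde R_9)$ are the nine roots of the polynomial $g(X):=\psi_3(X)^2\bigl(X-x(R_{\vec{a}})\bigr)-\psi_2(X)\psi_4(X)$, of degree $9$ in $X$, whose coefficients are polynomials of weight at most $9$ in $A,B,x(R_{\vec{a}})$ under the grading $2,3,1$. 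Bounding each $h(x(\tilde R))$ via Mahler's inequality applied to $g$, combining with standard height inequalities for sums and differences, and invoking the Liouville-type bound $-\log|\alpha|_\infty\leq [\Q(\alpha):\Q]\cdot h(\alpha)$ to pass from Weil heights of algebraic quantities back to archimedean absolute values, one obtains an estimate of the shape
\[
\log\prod_{k=1}^9 d_k^{-1}\leq 576\log T+72\,h(R_{\vec{a}})+O(\delta\,h(P))+O(1),
\]
the explicit constants $576$ and $72$ arising from careful weight-tracking.

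Feeding in $h(R_{\vec{a}})\leq h(P)/D^2+O(\delta\,h(P))$ from Lemma~\ref{height of R is small compared to P} together with $\log T\leq h(P)/C$ (valid on $\III_D^{(i,\vec{a})}$ by its defining bound $h(P)>C\log T$), and dividing through by $h(P)$, yields
\[
\frac{\log\prod_{k=1}^9 d_k^{-1}}{h(P)}\leq\frac{576}{C}+\frac{72}{D^2}+O(\delta).
\]
Comparing with the lower bound $\tfrac{1}{2}-\max(19/C,19/D^2)-O(\delta)$ from Lemma~\ref{the famous previous lemma} and invoking the hypothesis $576/C+72/D^2+\max(19/C,19/D^2)<\tfrac{1}{2}$ produces a contradiction once $\delta\ll 1$, so $\IV_D^{(i,\vec{a})}=\emptyset$. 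The main obstacle is pinning down the explicit constants $576$ and $72$ in the display above: this requires bookkeeping not only the weight-$9$ bound on the coefficients of $g$, but also the factor $[\Q(x(\tilde R_i),x(\tilde R_j)):\Q]$ (at most $72$) one incurs when converting Weil heights of the algebraic differences $x(\tilde R_i)-x(\tilde R_j)$ into lower bounds on $|x(\tilde R_i)-x(\tilde R_j)|$ at the chosen archimedean embedding.
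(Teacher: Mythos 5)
Your overall skeleton is the same as the paper's: assume $P\in \IV_D^{(i,\vec{a})}$, observe that the defining condition amounts to $d_1>\tfrac12 d_2$ for the ordered distances, use the triangle inequality to bound each $|x(Q)-x(\tilde R_k)|$ below by a fixed multiple of a pairwise separation $|x(\tilde R_i)-x(\tilde R_j)|$ (your decomposition is in fact slightly finer than the paper's crude $\prod_k d_k^{-1}\ll \min_{i\neq j}|x(\tilde R_i)-x(\tilde R_j)|^{-9}$), establish $\log\prod_k d_k^{-1}\leq 576\log T+72\,h(R_{\vec{a}})+O(1)$, and then contradict Lemma \ref{the famous previous lemma} via Lemma \ref{height of R is small compared to P} and $h(P)>C\log T$. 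The gap is in the one step that carries the actual content of the lemma: producing the constants $576$ and $72$. The paper obtains them from Mahler's explicit lower bound for the minimal distance between the roots of a polynomial (p.~262 of \cite{mahler}), applied to the degree-$9$ polynomial $9\prod_{3\tilde R=-R_{\vec{a}}}(x(Q)-x(\tilde R))=\psi_3(Q)^2x(Q)-\psi_2(Q)\psi_4(Q)-\psi_3(Q)^2x(R_{\vec{a}})$; since this is linear in $x(R_{\vec{a}})$, clearing a single denominator gives an integral polynomial of na\"ive height at most $8\log T+h(R_{\vec{a}})+O(1)$, and Mahler's inequality --- which exploits the integrality of the discriminant and therefore controls all pairwise root separations simultaneously --- yields $\min_{i\neq j}|x(\tilde R_i)-x(\tilde R_j)|\gg T^{-64}H(R_{\vec{a}})^{-8}$; raising to the ninth power gives exactly $576\log T+72\,h(R_{\vec{a}})$.

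The route you propose --- bounding each $h(x(\tilde R_k))$ from $g$, using $h(\alpha-\beta)\leq h(\alpha)+h(\beta)+\log 2$, and then Liouville with the degree factor $[\Q(x(\tilde R_i),x(\tilde R_j)):\Q]\leq 72$ --- cannot reach those constants. Even under the most optimistic accounting, where every root has degree $9$ and hence $h(x(\tilde R_k))\leq \tfrac19\bigl(8\log T+h(R_{\vec{a}})\bigr)+O(1)$, each pairwise term costs $72\cdot\tfrac29\bigl(8\log T+h(R_{\vec{a}})\bigr)=128\log T+16\,h(R_{\vec{a}})$, and the nine logarithmic factors in your display then total about $1152\log T+144\,h(R_{\vec{a}})$ --- a factor of $2$ too large (and the bookkeeping only worsens if some $x(\tilde R_k)$ has small degree, since its height is then only bounded by $\log M(g)$ rather than $\tfrac19\log M(g)$). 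Because the hypothesis of the lemma hard-codes $\tfrac{576}{C}+\tfrac{72}{D^2}$, a proof that delivers larger constants proves a strictly weaker statement and breaks the later optimization. So the missing ingredient is precisely the discriminant-based root-separation inequality of Mahler (or an equivalent resultant argument), which converts nine pairwise Liouville estimates into a single estimate of the right size; the rest of your argument --- the reduction to $d_1>\tfrac12 d_2$, the triangle-inequality bookkeeping, and the final substitution of Lemmas \ref{the famous previous lemma} and \ref{height of R is small compared to P} together with $h(P)>C\log T$ --- is sound and is essentially what the paper does.
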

\begin{proof}
Suppose $P\in \IV_D^{(i,\vec{a})}$. Then, by definition, $$\prod_{3\tilde{R} = -R_{\vec{a}}} \left|x(Q) - x(\tilde{R})\right|^{-1}\ll \min_{\tilde{R}\neq \tilde{R}', 3\tilde{R} = 3\tilde{R}' = -R_{\vec{a}}} |x(\tilde{R}) - x(\tilde{R}')|^{-9}.$$ Now, as we saw in the previous lemma, as polynomials in $x(Q)$, $$9\prod_{3\tilde{R} = -R_{\vec{a}}} (x(Q) - x(\tilde{R})) = \psi_3(Q)^2 x(Q) - \psi_2(Q)\psi_4(Q) - \psi_3(Q)^2 x(R_{\vec{a}}).$$ This is homogeneous of degree $9$ in $x(Q), x(R), A, B$ when the variables are given degrees $1, 1, 2, 3$, respectively. Therefore the coefficients of $x(Q)$ in the first two terms (namely, $\psi_3(Q)^2 x(Q) - \psi_2(Q)\psi_4(Q)$) are bounded in absolute value by $\ll T^8$. Thus the polynomial has na\"{\i}ve height, in the sense of Bugeaud and Mignotte \cite{bugeaudmignotte}, at most $8\log{T} + h(R_{\vec{a}})$. To see this, clear the denominator of $x(R_{\vec{a}})$ so that the polynomial is an integral polynomial and then the estimate is clear. Therefore by the estimate on page 262 of Mahler \cite{mahler}, we find that $$\min_{\tilde{R}\neq \tilde{R}', 3\tilde{R} = 3\tilde{R}' = -R_{\vec{a}}} |x(\tilde{R}) - x(\tilde{R}')|\gg T^{-64} H(R_{\vec{a}})^{-8},$$ and hence that $$\prod_{3\tilde{R} = -R_{\vec{a}}} \log{\left|x(Q) - x(\tilde{R})\right|^{-1}}\leq 576\log{T} + 72h(R_{\vec{a}}) + O(1).$$

Therefore by Lemma \ref{the famous previous lemma} it follows that $$\frac{1}{2} \leq \frac{576\log{T} + 72h(R_{\vec{a}}) + O(1)}{h(P)} + \max\left(\frac{19\log{T}}{h(P)}, \frac{19}{D^2}\right) + O(\delta).$$ Applying Lemma \ref{height of R is small compared to P}, we see that $$\frac{1}{2} \leq \frac{576\log{T}}{h(P)} + \frac{72}{D^2} + \max\left(\frac{19\log{T}}{h(P)}, \frac{19}{D^2}\right) + O(\delta).$$

The desired contradiction now follows (once $\delta\ll 1$) by using the inequality $h(P) > C\log{T}$.
\end{proof}

Finally we will bound the size of $\III_D^{(i,\vec{a},\tilde{R})}$ for $D$ suitably chosen. The idea here is that, roughly, we have obtained the inequality $$\frac{\log{\prod_{3\tilde{R} = -R_{\vec{a}}} \left|x(Q) - x(\tilde{R})\right|^{-1}}}{h(Q)}\geq 4.49,$$ and now $Q$ is very close to some $\tilde{R}$. Therefore, roughly, this tells us that $|x(Q) - x(\tilde{R})|\leq H(Q)^{-4.48}$, and so $x(Q)$ is a Roth-type rational approximation to $x(\tilde{R})$. But Roth's theorem requires many such rational approximations to reach a contradiction, and hence provides a poor bound on their number for our purposes. In fact $x(Q)$ is \emph{also} a Siegel-type rational approximation, in the sense that $x(\tilde{R})$ is of degree $9$ over $\Q$, and $\sqrt{2\deg{x(\tilde{R})}} = \sqrt{18} = 4.24... < 4.48$. Moreover $x(Q)$ has very large height compared to $x(\tilde{R})$, so if we are very careful with how we prove Siegel's theorem on Diophantine approximation (namely, via Roth's lemma for bivariate polynomials), we will be able to conclude.

So let $c < 1$ be another parameter (which we will choose such that $1-c\gg 1$). Given $c$ and $D$, we may bound the size of $\III_D^{(i,\vec{a},\tilde{R})}$ as follows.
\begin{lem}
Suppose $s\in \Z^+$ is such that $$\left(\frac{\sqrt{2}c}{3} - \frac{1}{(\kappa - 1)^s}\right)\kappa - \frac{1 + \frac{1}{(\kappa - 1)^s}}{(D-1)^2}\left(9 + \frac{\kappa + 1}{(c^{-2} - 1)}\right) > 2.$$ Then $\#|\III_D^{(i,\vec{a},\tilde{R})}|\leq 2s$.
\end{lem}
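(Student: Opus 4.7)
The proof will run Siegel's original proof of his Diophantine approximation theorem through an auxiliary bivariate polynomial and an explicit version of Roth's lemma, being careful to keep all constants explicit. Suppose for contradiction that $\#|\III_D^{(i,\vec{a},\tilde{R})}| \geq 2s+1$. Pairing each $P$ with $-P$ and discarding one sign, one obtains at least $s+1$ distinct rational points $Q_1, \ldots, Q_{s+1}$ (via $P = 3Q + R_{\vec{a}}$), each giving a rational approximation $x(Q_j)$ to the algebraic number $\alpha := x(\tilde{R})$, which has degree $9$ over $\Q$. After sorting by height, arrange $h(Q_{j+1}) \geq \kappa \cdot h(Q_j)$, where $\kappa > 1$ is the height-ratio parameter appearing in the hypothesis.

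Since by definition of $\III_D^{(i,\vec{a},\tilde{R})}$ the quantity $|x(Q_j) - \alpha|$ is much smaller than $|x(Q_j) - x(\tilde{R}')|$ for any other root $\tilde{R}'$ of $3\tilde{R}' = -R_{\vec{a}}$, the product estimate of Lemma \ref{the famous previous lemma} collapses to
\begin{equation*}
\log|x(Q_j) - \alpha|^{-1} \geq \tfrac{h(P)}{2} - O\!\left(\tfrac{h(P)}{D^2}\right) - O(\log T),
\end{equation*}
and the second estimate of that lemma pins $h(P)/(9\,h(Q_j))$ close to $1$. Thus $x(Q_j)$ is a rational approximation to $\alpha$ of quality strictly better than $\sqrt{2\deg \alpha} = \sqrt{18}$; the parameter $c < 1$ in the statement controls precisely how much of this margin we use when choosing bidegrees for the auxiliary polynomial.

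Next, construct by Siegel's lemma a nonzero polynomial $F(X_1, X_2) \in \Z[X_1, X_2]$ of bidegree $(d_1, d_2)$ with $d_1\,h(Q_1) \asymp d_2\,h(Q_{s+1})$, vanishing to prescribed high index at $(\alpha, \alpha)$. Its coefficients are polynomially bounded in $d_1, d_2$ and the height of the minimal polynomial of $\alpha$, which in turn is controlled by $T^8 \cdot H(R_{\vec{a}})$ via the Mahler root-separation estimate invoked in the preceding lemma. Evaluating $F(x(Q_1), x(Q_{s+1}))$ and expanding as a Taylor series around $(\alpha, \alpha)$ uses the Diophantine inequality to force $|F(x(Q_1), x(Q_{s+1}))|$ to be very small; after dividing through by $\min(d_1 h(Q_1), d_2 h(Q_{s+1}))$ one obtains the ``main term'' $\frac{\sqrt{2}\,c}{3}\kappa$, with the correction $\frac{1}{(D-1)^2}\bigl(9 + \frac{\kappa+1}{c^{-2}-1}\bigr)$ collecting (i) the height loss from writing $P = 3Q + R_{\vec{a}}$ through Lemma \ref{height of R is small compared to P} and (ii) the interpolation slack built into the bidegree ratio via $c$.

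Conversely, Roth's bivariate lemma in the sharp form (e.g.\ Bombieri-Gubler's explicit treatment) furnishes a matching lower bound, showing $F(x(Q_1), x(Q_{s+1})) \neq 0$ provided the bidegrees and indices are tuned against the height ratio $\kappa$. The characteristic $(\kappa-1)^{-s}$ correction in the hypothesis appears as the accumulated index-reduction cost of $s$ applications of the bivariate reduction, whose per-step cost is $(\kappa-1)^{-1}$. Comparing the upper and lower bounds yields exactly the displayed inequality; the ``$>2$'' on the right records the combinatorial constraint $d_1 + d_2 \geq 2\max(d_1, d_2)$ built into any nontrivial bidegree choice. The main obstacle is the bookkeeping in the sharpened Roth's lemma: one must arrange the index parameters so that the cost per reduction step is genuinely $(\kappa-1)^{-1}$ rather than the weaker $\kappa^{-1}$ of Roth's original formulation, since only then is the range of admissible $(s, D, c, \kappa)$ large enough to give the numerical constant $66$ claimed in Theorem \ref{constant theorem}.
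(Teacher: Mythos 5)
Your overall skeleton matches the paper's: each $P\in \III_D^{(i,\vec{a},\tilde{R})}$ yields, via $P=3Q+R_{\vec{a}}$, a rational approximation $x(Q)$ to $\alpha=x(\tilde{R})$ of exponent $\kappa$ (by Lemma \ref{the famous previous lemma} plus Mahler's root-separation bound), which is also ``large'' in the sense $h(Q)\geq (D-1-O(\delta))^2 h(\alpha)$, and one then runs Siegel's lemma, an explicit bivariate Roth's lemma in the style of Bombieri--Gubler, and a Liouville lower bound against each other. But two of your structural claims are wrong in a way that leaves a genuine gap. First, the factor $(\kappa-1)^{-s}$ does not come from ``$s$ applications of the bivariate reduction''; the Roth/Wronskian machinery is applied exactly once, to the single extreme pair $(\beta_1,\beta_2)=(\lambda_1,\lambda_{s+1})$. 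The exponent $s$ enters through a gap principle among the $s+1$ approximations: since $\lambda_{i-1},\lambda_i$ are distinct rationals both within $2H(\lambda_{i-1})^{-\kappa}$ of $\alpha$, one has $\frac{1}{H(\lambda_{i-1})H(\lambda_i)}\leq |\lambda_{i-1}-\lambda_i|\leq 2H(\lambda_{i-1})^{-\kappa}$, hence $h(\lambda_i)\geq (\kappa-1)h(\lambda_{i-1})+O(1)$, and chaining gives $h(\lambda_1)/h(\lambda_{s+1})\leq (\kappa-1)^{-s}+o(1)$; this ratio is what the bidegree ratio $d_2/d_1$ is chosen to approximate, and that is how $(\kappa-1)^{-s}$ appears in the final inequality. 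Your alternative --- ``after sorting, arrange $h(Q_{j+1})\geq \kappa\, h(Q_j)$'' --- is not something you can arrange (passing to a subsequence would destroy the count of $s+1$ points), has the wrong constant ($\kappa-1$, not $\kappa$), and must be \emph{proved} from the approximation property; without it the hypothesis inequality cannot be reached.

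Second, the ``$>2$'' is not a combinatorial constraint of the form $d_1+d_2\geq 2\max(d_1,d_2)$ (which is false for $d_1\neq d_2$ anyway). It is the Liouville denominator term: after differentiating the auxiliary polynomial to a $q$ with $q(\beta_1,\beta_2)\neq 0$, one has $|q(\beta_1,\beta_2)|\geq H(\beta_1)^{-d_1}H(\beta_2)^{-d_2}$, and since $d_1h(\beta_1)\approx d_2h(\beta_2)$ this contributes $\approx 2d_1h(\beta_1)$, which after dividing by $d_1h(\beta_1)$ is precisely the $2$ on the right-hand side. Relatedly, the bivariate Roth's lemma does not directly show $F(x(Q_1),x(Q_{s+1}))\neq 0$; it bounds the index of vanishing of $p$ at $(\beta_1,\beta_2)$ (via the Wronskian determinant $U(x)$ and the fact that high-order vanishing at a rational point forces large height), and only the low-order derivative $q$ is guaranteed nonzero there; the upper bound on $|q(\beta_1,\beta_2)|$ then comes from Taylor expansion about $(\alpha,\alpha)$ together with the two approximation inequalities. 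Finally, the closing worry about making ``the cost per reduction step genuinely $(\kappa-1)^{-1}$'' is misplaced: in two variables there is no iterated reduction at all, which is exactly why the argument can be kept this explicit.
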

\begin{proof}
Let $P\in \III_D^{(i,\vec{a},\tilde{R})}$. Note that, for all $\tilde{R}'\neq \tilde{R}$ such that $3\tilde{R}' = -R_{\vec{a}}$, $$|x(Q) - x(\tilde{R}')| > \frac{1}{2}|x(\tilde{R}) - x(\tilde{R}')|$$ by the triangle inequality. Therefore $$\prod_{3\tilde{R}' = -R_{\vec{a}}, \tilde{R}'\neq \tilde{R}} \left|x(Q) - x(\tilde{R}')\right|\gg \prod_{3\tilde{R}' = -R_{\vec{a}}, \tilde{R}'\neq \tilde{R}} |x(\tilde{R}) - x(\tilde{R}')|.$$ By a bound of Mahler (the last line on page 262 of \cite{mahler}), $$\prod_{3\tilde{R}' = -R_{\vec{a}}, \tilde{R}'\neq \tilde{R}} |x(\tilde{R}) - x(\tilde{R}')|\gg T^{-56}H(R_{\vec{a}})^{-7}.$$ Hence, by Lemma \ref{the famous previous lemma},
\begin{align*}
\frac{\log{\left|x(Q) - x(\tilde{R})\right|^{-1}}}{h(P)}\geq \frac{1}{2} - \max\left(\frac{19}{C}, \frac{19}{D^2}\right) - \frac{56}{C} - \frac{7}{D^2} + O(\delta).
\end{align*}
Next, applying the second part of Lemma \ref{the famous previous lemma}, we therefore find that
\begin{align}\label{end of calculation of kappa}
\frac{\log{\left|x(Q) - x(\tilde{R})\right|^{-1}}}{h(Q)}\geq \left(\frac{9}{2} - \max\left(\frac{171}{C}, \frac{171}{D^2}\right) - \frac{504}{C} - \frac{63}{D^2} + O(\delta)\right)\left(1 + D^{-1} + O(\delta)\right)^{-2}.
\end{align}\noindent
Write $$\kappa := \left(\frac{9}{2} - \max\left(\frac{171}{C}, \frac{171}{D^2}\right) - \frac{504}{C} - \frac{63}{D^2}\right)\left(1 + D^{-1}\right)^{-2} + O(\delta).$$ Then $$|x(Q) - x(\tilde{R})|\leq H(Q)^{-\kappa}.$$ That is, $x(Q)\in \Q$ is a rational approximation to $x(\tilde{R})\in \Qbar$ with exponent $\kappa$. Moreover,
\begin{align}
h(Q)&\geq \frac{1}{9}h(P)\left(1 - D^{-1} - O(\delta)\right)^2\nonumber
\\&\geq \frac{(D-1-O(\delta))^2}{9}\hat{h}(R_{\vec{a}})\nonumber
\\&\geq (D-1-O(\delta))^2 \hat{h}(\tilde{R})\nonumber
\\&\geq (D-1-O(\delta))^2 h(\tilde{R})\label{height of tilde R is small compared to Q}
\end{align}\noindent
so that $x(Q)$ is a ``large'' rational approximation of $x(\tilde{R})$ as well. To bound the number of these, we will run through the usual argument for Siegel's theorem on Diophantine approximation via Roth's lemma, except we will be explicit and careful in our bounds.

Write $\alpha := x(\tilde{R})$ (whence $\deg{\alpha}\leq 9$ and $|\alpha|\ll T + |x(R_{\vec{a}})|$) and let us suppose there were $s+1$ such approximations --- i.e.\ $\lambda_i\neq \lambda_j$ satisfying:
\begin{enumerate}
\item $\lambda_i\in \Q$,
\item $|\lambda_i|\ll T + |x(R_{\vec{a}})|$,
\item $|\lambda_i - \alpha|\leq H(\lambda_i)^{-\kappa}$,
\item $h(\lambda_i)\geq (D-1-O(\delta))^2 h(\alpha)$,
\item $h(\lambda_i)\geq \frac{C}{9}(1-D^{-1}-O(\delta))^2 \log{T}$.
\end{enumerate}\noindent
Let us also suppose, without loss of generality, that $H(\lambda_{s+1})\geq H(\lambda_{s-1})\geq\cdots\geq H(\lambda_1)$.

Note that, by rationality of the $\beta_i$ we have that $$\frac{1}{H(\lambda_{i-1})H(\lambda_i)}\leq |\lambda_{i-1} - \lambda_i|\leq 2H(\lambda_{i-1})^{-\kappa}.$$ Hence $$H(\lambda_i)\geq \frac{1}{2}H(\lambda_{i-1})^{\kappa-1}$$ --- i.e., $$h(\lambda_i)\geq (\kappa - 1) h(\lambda_{i-1}) + O(1).$$ Therefore $$h(\lambda_{s+1})\geq (\kappa - 1)^s h(\lambda_1) + O(s).$$ Hence $\lambda_{s+1}$ and $\lambda_1$ are very far apart in height, and it is these rational approximations that we will use. We will write $\beta_2 := \lambda_{s+1}$ and $\beta_1 := \lambda_1$.

Now let $d_1 > d_2\in \Z^+$ be such that $$\left|\frac{d_2}{d_1} - \frac{h(\beta_1)}{h(\beta_2)}\right|\leq \frac{1}{d_1^2}.\footnote{Of course infinitely many such $d_1$ and $d_2$ exist if $\frac{h(\beta_1)}{h(\beta_2)}$ is irrational, but since we do not require $(d_1, d_2) = 1$, such $d_1, d_2$ exist in case the ratio of heights is rational as well!}$$ We will take $d_1, d_2\to\infty$ at the end of the argument, so any error terms suppressed by factors of $d_1$ or $d_2$ will be negligible.

Let $t := \frac{c\sqrt{2}}{3}$ and let
\begin{align*}
\deg{\alpha}\cdot K &:= \deg{\alpha}\cdot d_1d_2\cdot \frac{t^2}{2}\cdot \left(1 + t^{-1}(d_1^{-1} + d_2^{-1})\right)^2
\\&= d_1d_2 c^2\left(1 + \frac{3}{cd_1\sqrt{2}} + \frac{3}{cd_2\sqrt{2}}\right)^2
\\&\leq d_1d_2 (c + O(\delta))^2
\end{align*}\noindent
once $d_1, d_2\gg_{c,\delta} 1$. An application of Siegel's lemma gives us the following:
\begin{claim}
There is a nonzero $p\in \Z[x,y]$ such that $$(\d_x^k\d_y^\ell p)(\alpha, \alpha) = 0$$ for all nonnegative integers $k,\ell$ with $$\frac{k}{d_1} + \frac{\ell}{d_2}\leq t,$$ and such that $$H(p)\leq O(H(\tilde{R}))^{\frac{d_1 + d_2}{c^{-2} - 1 - O(\delta)}}.$$
\end{claim}
\begin{proof}[Proof of Claim.]
We apply Siegel's lemma in the form of Bombieri-Gubler Lemma 2.9.1 \cite{bombierigubler}. Indeed, we are imposing the conditions $\sum_{0\leq i\leq d_1, 0\leq j\leq d_2} a_{ij}\alpha^{i+j-k-\ell}{d_1\choose k}{d_2\choose k} = 0$ on the coefficients $a_{ij}\in \Z$ of $P$. But recall that we have the relation $$\den\cdot \alpha^{\deg{\alpha}} = f(\alpha)$$ with $$f(z) := \den\cdot z^{\deg{\alpha}} - \den\cdot g(z),$$ and $g(z)\in \Q[z]$ the minimal polynomial of $\alpha$ (here $\den$ is the least positive integer such that $\den\cdot g\in \Z[z]$). Multiplying our relations through by $\den^{d_1 + d_2 - \deg{\alpha} + 1}$ and repeatedly applying this relation reduces us to forcing $\deg{\alpha}$ times as many conditions (but now with integral coefficients) for each condition with coefficients in $\Q(\alpha)$. Importantly, since the coefficients of $f(z)\in \Z[z]$ are all of absolute value at most $O(H(\alpha))$ and we apply the relation $\leq d_1 + d_2$ times, the resulting linear conditions on $a_{ij}$ have coefficients bounded in absolute value by $$\ll O(1)^{d_1 + d_2} H(\tilde{R})^{d_1 + d_2},$$ where we get an $O(1)^{d_1+d_2}H(\tilde{R})^{d_1+d_2}$ from the $\alpha^{i+j-k-\ell}$ terms, and an $O(1)^{d_1 + d_2}$ from the binomial coefficients and the sum.

To conclude, we note that the number of variables $a_{ij}$ is $(d_1 + 1)(d_2 + 1)$, and the number of equations is $\deg{\alpha}\cdot \#\left|\{\frac{k}{d_1} + \frac{\ell}{d_2}\leq t\}\right|$, which is at most $\deg{\alpha}\cdot K$ by Bombieri-Gubler page 158 \cite{bombierigubler}. Now apply Lemma 2.9.1 of \cite{bombierigubler}.
\end{proof}

So let $p$ be such a polynomial. Following Bombieri-Gubler, we define the index of vanishing of a polynomial $q\in \Z[x,y]$ at a point $(\xi_1, \xi_2)$ to be $$\ind(q, \vec{\xi}) := \min\left\{\frac{k}{d_1} + \frac{\ell}{d_2} : k,\ell\geq 0, (\d_x^k\d_y^\ell q)(\xi_1, \xi_2)\neq 0\right\}.$$ As Bombieri-Gubler note, $\ind(\cdot, \vec{\xi})$ is a non-Archimedean valuation on $\Z[x,y]$, and $$\ind(\d_x^a\d_y^b q, \vec{\xi})\geq \ind(q,\vec{\xi}) - \frac{a}{d_1} - \frac{b}{d_2}.$$

By construction $\ind(p, (\alpha, \alpha))\geq t$. To show that $\ind(p, (\beta_1, \beta_2))$ is small, we will use an improved bivariate form of Roth's lemma. Specifically, we will prove:
\begin{claim}
$$\ind(p, \vec{\beta})\leq \frac{d_2}{d_1} + \frac{(1 + \frac{d_2}{d_1})}{(c^{-2} - 1)(D-1)^2} + O(\delta).$$
\end{claim}\noindent
We will simply follow Bombieri-Gubler and be more careful in the bivariate case.
\begin{proof}[Proof of Claim.]
Write
\begin{align}\label{better expression for U(x)}
U(x) := \det\left(\sum_{0\leq k\leq d_1} {k\choose i} a_{kj} x^{k-i}\right)_{0\leq i,j\leq d_2}.
\end{align}\noindent
Note that
\begin{align}\label{latter expression for U(x)}
U(x) = \det\left(\frac{\d_x^i\d_y^j p}{i!j!}\right)_{0\leq i,j\leq d_2}
\end{align}\noindent
as polynomials in $\Z[x,y]$, since the latter is simply $U(x)$ times $\det\left({j\choose i} y^{j-i}\right)_{0\leq i,j\leq d_2} = 1$. But \eqref{latter expression for U(x)} is proportional to the Wronskian of $p$, whence it does not vanish identically as a polynomial in $x,y$ (i.e., in $x$) by Wronski's theorem.

Now, by expanding out the determinant in \eqref{better expression for U(x)} as a sum over permutations, we find that $$\deg{U}\leq d_1 + (d_1 - 1) + \cdots + (d_1 - d_2) = (d_2 + 1)\left(d_1 - \frac{d_2}{2}\right).$$ Also, by examining the absolute value of the coefficients of $U$ via the same sum over permutations, we find that
\begin{align*}
H(U)&\leq O(1)^{d_1d_2} H(p)^{d_2 + 1}
\\&\leq O(1)^{d_1d_2} H(\tilde{R})^{\frac{(d_1 + d_2)(d_2 + 1)}{c^{-2} - 1 - O(\delta)}}.
\end{align*}

But now for a univariate polynomial $f(x)\in \Z[x]$, $(qx - p)^k\vert f(x)$ (which implies $H(f)\geq H\left(\frac{p}{q}\right)^k$) if $f$ vanishes to order $k$ at $\frac{p}{q}$. Hence $$H(U)\geq H(\beta_1)^{d_1 \ind(W,\vec{\beta}) - 1},$$ or, written another way, $$\ind(W,\vec{\beta})\leq \frac{h(U)}{d_1 h(\beta_1)} + O\left(\frac{1}{d_1}\right).$$

But, applying the fact that $\ind(\cdot, \vec{\beta})$ is a non-Archimedean valuation, $$\ind(W,\vec{\beta}) = \ind(U,\vec{\beta})\geq \min_{\sigma\in S_{d_2+1}} \sum_{a=0}^{d_2} \ind(\d_x^a\d_y^{\sigma(a)} p, \vec{\beta}).$$ But $$\ind(\d_x^a\d_y^{\sigma(a)} p, \vec{\beta})\geq \max\left(\ind(p, \vec{\beta}) - \frac{a}{d_1}, 0\right) - \frac{\sigma(a)}{d_1},$$ so that this sum is simply
\begin{align*}
&-\frac{d_2(d_2 + 1)}{2d_1} + \sum_{0\leq a\leq \min(d_2, d_1\cdot \ind(p, \vec{\beta}))} \ind(p,\vec{\beta}) - \frac{a}{d_1} \\&= -\frac{d_2(d_2 + 1)}{2d_1} + \begin{cases} (d_2 + 1)\left(\ind(p, \vec{\beta}) - \frac{d_2}{2d_1}\right) & \ind(p,\vec{\beta}) > \frac{d_2}{d_1},\\ \left(\floor{d_1\ind(p,\vec{\beta})} + 1\right)\cdot \ind(p, \vec{\beta}) - \frac{\floor{d_1 \ind(p,\vec{\beta})}(\floor{d_1 \ind(p,\vec{\beta})} + 1)}{2d_1} & \ind(p,\vec{\beta})\leq \frac{d_2}{d_1}.\end{cases}
\end{align*}

In the first case we derive the inequality $$\ind(p,\vec{\beta})\leq \frac{d_2}{d_1} + \frac{(1 + \frac{d_2}{d_1})\cdot \frac{h(\tilde{R})}{h(\beta_1)}}{c^{-2} - 1 - O(\delta)} + O(\delta).$$ In the second case we start with the inequality $\ind(p,\vec{\beta})\leq \frac{d_2}{d_1}$ anyway.

Therefore $$\ind(p,\vec{\beta})\leq \frac{d_2}{d_1} + \frac{(1 + \frac{d_2}{d_1})\cdot \frac{h(\tilde{R})}{h(\beta_1)}}{c^{-2} - 1 - O(\delta)} + O(\delta).$$ Recall that $h(\beta_1) = h(Q)\geq (D-1-O(\delta))^2 h(\tilde{R})$ by \eqref{height of tilde R is small compared to Q}, so that our bound reads $$\ind(p,\vec{\beta})\leq \frac{d_2}{d_1} + \frac{(1 + \frac{d_2}{d_1})}{(c^{-2} - 1)(D-1)^2} + O(\delta),$$ as desired.
\end{proof}

Therefore there are $a,b$ such that $(\d_x^a\d_y^b p)(\beta_1, \beta_2)\neq 0$ and $$\frac{a}{d_1} + \frac{b}{d_2}\leq \frac{d_2}{d_1} + \frac{(1 + \frac{d_2}{d_1})}{(c^{-2} - 1)(D-1)^2} + O(\delta).$$ Let now $$q(x,y) := \frac{(\d_x^a\d_y^b p)(x,y)}{a!b!}\in \Z[x,y].$$ Notice that $$H(q)\leq O(1)^{d_1+d_2}H(p)\leq O(H(\alpha))^{\frac{d_1 + d_2}{c^{-2} - 1 - O(\delta)}}$$ as well. Moreover
\begin{align*}
\ind(q,(\alpha,\alpha))&\geq \ind(p,(\alpha,\alpha)) - \frac{a}{d_1} - \frac{b}{d_2}\\&\geq t - \frac{d_2}{d_1} - \frac{(1 + \frac{d_2}{d_1})}{(c^{-2} - 1)(D-1)^2} + O(\delta).
\end{align*}

Let now $k_*,\ell*\geq 1$ be such that $$\frac{k_*-1}{d_1} + \frac{\ell_*}{d_2}, \frac{k_*}{d_1} + \frac{\ell_* - 1}{d_2}\leq \ind(q,(\alpha,\alpha))$$ but $$\frac{k_*}{d_1} + \frac{\ell_*}{d_2} > \ind(q,(\alpha,\alpha)).$$ Then observe that $$q(x,y) = \int_\alpha^x\cdots \int_\alpha^{w_{k_*-1}}\int_\alpha^y\cdots \int_\alpha^{z_{\ell_*-1}} (\d_x^{k_*}\d_y^{\ell_*} q)(w_{k_*},z_{\ell_*}) dw_1\cdots dw_{k_*} dz_1\cdots dz_{\ell_*}.$$ Therefore $$|q(x,y)|\leq |x-\alpha|^{k_*}|y-\alpha|^{\ell_*} \sup_{(w,z)\in [\alpha,x]\times [\alpha,y]} \left|\frac{(\d_x^{k_*}\d_y^{\ell_*} q)(w,z)}{k_*!\ell_*!}\right|.$$ Hence $q(\beta_1,\beta_2)\neq 0$ is bounded above in absolute value by $$|q(\beta_1, \beta_2)|\leq H(\beta_1)^{-\kappa k_*}H(\beta_2)^{-\kappa \ell_*} O(H(\alpha))^{\frac{d_1 + d_2}{c^{-2} - 1 - O(\delta)}} O(T + |x(R_{\vec{\alpha}})|)^{d_1 + d_2}.$$ But it is also a nonzero rational with denominator at most $H(\beta_1)^{d_1}H(\beta_2)^{d_2}$, so that $$|q(\beta_1, \beta_2)|\geq H(\beta_1)^{-d_1}H(\beta_2)^{-d_2}.$$
Therefore (using $d_1h(\beta_1) = d_2h(\beta_2) + O\left(\frac{h(\beta_2)}{d_2}\right)$) we have derived the inequality
\begin{align*}
-2d_1h(\beta_1)\leq -\kappa d_1h(\beta_1)\left(\frac{k_*}{d_1} + \frac{\ell_*}{d_2}\right) + \frac{(d_1 + d_2)h(\alpha)}{c^{-2} - 1 - O(\delta)} + (d_1 + d_2)\max(\log{T}, \log{|x(R_{\vec{\alpha}})|}) + O(d_1 + d_2).
\end{align*}
Using $$\frac{k_*}{d_1} + \frac{\ell_*}{d_2} > \frac{\sqrt{2}c}{3} - \frac{d_2}{d_1} - \frac{(1 + \frac{d_2}{d_1})}{(c^{-2} - 1)(D-1)^2} + O(\delta)$$ and dividing through by $d_1h(\beta_1)$, we find that
\begin{align*}
\left(\frac{\sqrt{2}c}{3} - \frac{d_2}{d_1}\right)\kappa - \left(\frac{(1 + \frac{d_2}{d_1})}{(c^{-2} - 1)(D-1)^2}\right)(\kappa + 1) - \frac{9(1 + \frac{d_2}{d_1})}{(D-1)^2} < 2 + O(\delta).
\end{align*}

Finally, recall that $\frac{d_2}{d_1} = \frac{h(\beta_1)}{h(\beta_2)} + O\left(\frac{1}{d_1^2}\right)\leq (\kappa - 1)^{-s} + O(\delta)$. Inserting this into the bound we get that
\begin{align*}
\left(\frac{\sqrt{2}c}{3} - \frac{1}{(\kappa - 1)^s}\right)\kappa - \frac{1 + \frac{1}{(\kappa - 1)^s}}{(D-1)^2}\left(9 + \frac{\kappa + 1}{(c^{-2} - 1)}\right) < 2 + O(\delta).
\end{align*}

This contradicts the hypothesis once $\delta\ll_{c,D} 1$, and so we are done.
\end{proof}

\subsection{Conclusion of proof}

Summarizing, we have proved:
\begin{prop}
Let $c < 1$, $D > 1$, $\tilde{D} := \frac{D + \sqrt{D^2+4}}{2}$, $C := 5\tilde{D}^2$, and $s\in \Z^+$ be such that $$\frac{576}{C} + \frac{72}{D^2} + \max\left(\frac{19}{C}, \frac{19}{D^2}\right) < \frac{1}{2}$$ and $$\left(\frac{\sqrt{2}c}{3} - \frac{1}{(\kappa - 1)^s}\right)\kappa - \frac{1 + \frac{1}{(\kappa - 1)^s}}{(D-1)^2}\left(9 + \frac{\kappa + 1}{(c^{-2} - 1)}\right) > 2,$$ where $$\kappa := \left(\frac{9}{2} - \max\left(\frac{171}{C}, \frac{171}{D^2}\right) - \frac{504}{C} - \frac{63}{D^2}\right)\left(1 + D^{-1}\right)^{-2}.$$

Let $\delta\ll_{c,D} 1$. Let $T\gg_{c,D,\delta} 1$. Let $1 < J < 2$. Let $E\in \mathcal{F}_*$. Then: 
\begin{enumerate}
\item If $\rank(E) = 0$ then $\#|E(\Z)| = 0$. 
\item If $\rank(E) = 1$ then $\#|E(\Z)|\leq 2$.
\item If $\rank(E) = r > 1$ then:
\begin{align*}
\#|E(\Z)|&\leq 2r \left\lceil\frac{\log{\tilde{D}}}{\log{J}}\right\rceil\cdot \max_{S\subseteq \RP^{r-1} : \forall v\neq w\in S, |\langle v, w\rangle|\leq \frac{J}{2} + O(\delta)} \#|S|\\&\quad + 9s(3^r - 1).
\end{align*}
\end{enumerate}
\end{prop}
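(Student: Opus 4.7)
The plan is essentially to collate the ingredients built up in the preceding subsections. The numerical hypotheses imposed on $c$, $D$, $s$, and $C = 5\tilde{D}^2$ have been designed to be precisely the quantitative inputs demanded by the earlier lemmas; once those are fed in, the bound falls out by addition.

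For the rank zero case, I would appeal directly to the definition of $\mathcal{F}_*$: since $E(\Q)_{\mathrm{tors}} = 0$, rank zero means $E(\Q) = 0$, so $E(\Z) = \emptyset$. For the rank one case, Lemma \ref{multiples lemma} is exactly the statement $\#|E(\Z)| \leq 2$, so nothing further is required.

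For $r > 1$ I would use the decomposition $E(\Z) = \I \cup \bigcup_{i=1}^{r} \II_D^{(i)} \cup \bigcup_{i=1}^{r} \III_D^{(i)}$. Lemma \ref{multiples lemma} gives $\I = \emptyset$. For the medium range, the sphere-packing lemma yields $\#|\II_D^{(i)}| \leq 2\lceil \log \tilde{D}/\log J \rceil \cdot N$, where $N$ is the maximum of $\#|S|$ over $S \subseteq \RP^{r-1}$ with pairwise inner products at most $J/2 + O(\delta) = \cos\theta + O(\delta)$; summing over $i$ produces the first term $2r\lceil \log \tilde{D}/\log J \rceil \cdot N$ of the claimed bound. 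For the large range, I would further decompose $\III_D^{(i)} = \bigcup_{\vec{a} \in \{-1,0,1\}^i,\, a_i > 0} \III_D^{(i,\vec{a})}$ (a union of $3^{i-1}$ pieces) and then $\III_D^{(i,\vec{a})} = \IV_D^{(i,\vec{a})} \cup \bigcup_{3\tilde{R} = -R_{\vec{a}}} \III_D^{(i,\vec{a},\tilde{R})}$. The first numerical hypothesis on $C$ and $D$ is precisely the inequality under which the lemma on $\IV$ concludes $\IV_D^{(i,\vec{a})} = \emptyset$, and the second numerical hypothesis on $c$, $D$, $\kappa$, $s$ is exactly what the bivariate-Roth-type lemma needs to conclude $\#|\III_D^{(i,\vec{a},\tilde{R})}| \leq 2s$. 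Since there are $9$ cube roots $\tilde{R}$ of $-R_{\vec{a}}$ and $3^{i-1}$ admissible $\vec{a}$ for each $i$, summing gives $\sum_{i=1}^{r} 3^{i-1} \cdot 9 \cdot 2s = 18s \cdot \frac{3^r - 1}{2} = 9s(3^r - 1)$, the second term.

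There is no genuine obstacle; the only thing to watch is the bookkeeping. Specifically, one must confirm that the various $O(\delta)$ error terms produced in the preceding lemmas can all be absorbed uniformly under the joint hypothesis $\delta \ll_{c,D} 1$ (and, for the coefficient-bound and division-polynomial input feeding into Lemma \ref{multiples lemma}, $T \gg_{c,D,\delta} 1$), and that restricting to $a_i > 0$ in the decomposition of $\III_D^{(i)}$ is consistent with the factor of $2$ inside the bound $\#|\III_D^{(i,\vec{a},\tilde{R})}| \leq 2s$ (which comes from pairing a rational approximation $\lambda = x(Q)$ with both $\pm Q$). Once this accounting is checked, the three bullets follow immediately.
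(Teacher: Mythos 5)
Your proposal matches the paper exactly: the Proposition is stated there as a summary (``Summarizing, we have proved:'') of the preceding lemmas, and your collation --- Lemma \ref{multiples lemma} for ranks $0$, $1$ and for $\I$, the sphere-packing lemma for each $\II_D^{(i)}$, the emptiness of $\IV_D^{(i,\vec{a})}$ under the first numerical hypothesis, and the bound $\#|\III_D^{(i,\vec{a},\tilde{R})}|\leq 2s$ under the second, with the count $\sum_{i=1}^r 3^{i-1}\cdot 9\cdot 2s = 9s(3^r-1)$ --- is precisely the paper's accounting. Your closing remarks on absorbing the $O(\delta)$ errors and on the $\pm$ bookkeeping in the $a_i>0$ decomposition are the right points to verify and are consistent with how the paper handles them.
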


Note that, of course, this implies that if the density of curves with ranks $0$ and $1$ are both $\frac{1}{2}$, then $\limsup_{T\to\infty}\Avg_{\FuniversalT}(\#|E(\Z)|)\leq 2$, as claimed. (To see this, the only question is the contribution from the density zero higher-rank curves. To bound this, use the proposition and the Kabatiansky-Levenshtein bound (Theorem \ref{kablev}) and then combine H\"{o}lder's inequality with Bhargava-Shankar as usual.)

In any case, the details of the optimization procedure given this bound are given in the appendix since the rest of the argument is unrelated to Diophantine geometry. This completes the argument.
\end{proof}

\section{Proof of Theorem \ref{amazing theorem} and its corollaries}\label{amazing theorem proof}

To get inexplicit bounds we may simply follow the general procedure of the proof of Theorem \ref{constant theorem}. On examination, to prove Theorem \ref{amazing theorem} for a family $\mathcal{F}$, it is clear that the only estimates required are:
\begin{enumerate}
\item An estimate on small points: $$\limsup_{T\to\infty} \Avg_{\mathcal{F}^{\leq T}}\left(\#|\{P\in E(\Z) : h(P)\leq C\log{T} + O(1)\}|\right)\ll 1,$$
\item and a repulsion estimate on larger points: if $P\neq R\in E(\Z)$ with $h(P), h(R)\geq C\log{T} + O(1)$ and $h(R)\leq h(P)\leq (1+\Omega(1))h(R)$, then $$\cos{\theta_{P,R}}\leq 0.88,$$
\end{enumerate}
where the $0.88$ has come from the Kabatiansky-Levenshtein bound (Theorem \ref{kablev}) --- specifically, the solution to $$\exp\left(\frac{1+\sin{\theta}}{2\sin{\theta}}\log{\left(\frac{1+\sin{\theta}}{2\sin{\theta}}\right)} - \frac{1-\sin{\theta}}{2\sin{\theta}}\log{\left(\frac{1-\sin{\theta}}{2\sin{\theta}}\right)}\right) = 3$$ has $\cos{\theta} = 0.898...$.

From there one bounds the small points by the first part, the medium points by projecting those in an interval of shape $[X,(1+\Omega(1))X]$ to the unit sphere and applying Kabatiansky-Levenshtein, and the large points by using Siegel's argument, exactly as we did in the proof of Theorem \ref{constant theorem}. So to prove Theorem \ref{amazing theorem} we will provide exactly these ingredients. Since the families will be getting thinner and thinner (from $\asymp T^5$ for $\FuniversalT$ to $\asymp T^3$ for $\FAT$ to $\asymp T^2$ for $\FBT$ to $\asymp T$ for $\FcongT$), our constants $C$ in the small points esimates will get worse and worse (in fact we will always have $C = \frac{\log{\left(\#|\mathcal{F}|\right)}}{\log{T}}$\footnote{However, for congruent number curves, Le Boudec \cite{leboudec} has obtained a bound with $C = 2$, which is much stronger than the $C = 1$ we get with our methods.}), which will lead us to be a bit cleverer with our repulsion estimates each time. Note that the main issue in establishing the repulsion estimate is that the discriminants of the curves in these families are nowhere near squarefree, so the methods that allowed us to treat the canonical and Weil heights as roughly the same in the proof of Theorem \ref{constant theorem} do not apply here.\footnote{As a sidenote, one could also proceed by noting that the curves in each of these families are all twists of one another, and then estimating effects of twisting on the heights precisely. This reduces to a roughly similar computation, though we proceed via local heights in order to also introduce the idea of establishing repulsion between $2P$ and $2R$ for integral points $P$ and $R$. In fact, at least for the families $y^2 = x^3 + Ax$ and $y^2 = x^3 - D^2 x$, since we have such good control on the ranks of the curves in these families one could also simply apply the theorem of Hindry-Silverman (Theorem \ref{hindrysilvermanszpirobound}) after throwing out those curves with large Szpiro ratio, but the implied constants would be tremendous.}

\subsection{$y^2 = x^3 + B$}\label{mordell section}

\begin{proof}[Proof of Theorem \ref{amazing theorem} for $\mathcal{F}_{A=0}$.]
Of course $$\#|\FAT|\asymp T^3.$$

To count points with $|x|\leq 10^{10}T$, note that $y\ll T^{\frac{3}{2}}$ and that $x$ and $y$ determine $B$. Therefore the number of solutions $(x,y,B)$ with $|B|\ll T^3$ and $|x|\leq 10^{10}T$ is at most the number of $|x|\leq 10^{10}T$ and $|y|\ll T^{\frac{3}{2}}$, which is $\ll T^{2.5}$.

Otherwise, $|y|\asymp |x|^{\frac{3}{2}}$. But now given an $x$, if $(x,y,B)$ and $(x,y',B')$ are both solutions and (without loss of generality) $y,y' > 0$, then $$y^2 - y'^2 = B - B',$$ whence $$|y - y'|\ll \frac{T^3}{|x|^{\frac{3}{2}}}.$$ Hence, given an $10^{10}T\leq |x|\ll T^3$, the number of $y$ such that $|x^3 - y^2| =: |B|\ll T^3$ is $$\ll 1 + \frac{T^3}{|x|^{\frac{3}{2}}}.$$

Therefore, taking these together, the number of solutions $(x,y,B)$ with $|x|\ll T^3$ is $$\ll T^{2.5} + \sum_{T\ll |x|\ll T^3} 1 + \frac{T^3}{|x|^{\frac{3}{2}}}\ll T^3.$$ This contributes $\ll 1$ to the average.

So we have proved the first necessary result. For the second, again restrict (by Helfgott-Venkatesh, H\"{o}lder, and now Fouvry \cite{fouvry} instead of Bhargava-Shankar) to the subfamily with the largest square divisor of $B$ at most $\ll T^\delta$ and with $|\Delta|\asymp |B|^2\gg T^{6-\delta}$. Now $j(E_{0,B}) = 0$, so that, by Lang \cite{lang} (Chapter III, Section 4), at $p > 3$ such that $v_p(B) = 1$, $$\lambda_p(Q) - \hat{\lambda}_p(Q) = \log^+{|x(Q)|_p} - \log^+{|B^{-\frac{1}{3}}x(Q)|_p},$$ where $\lambda_p$ and $\hat{\lambda}_p$ are the local heights for $h$ and $\hat{h}$, respectively, and we have written $Q$ for a rational point on $E$. (Note that Lang's normalizations are different from ours by a factor of $2$.)

Now this expression for $\lambda_p - \hat{\lambda}_p$ is $\frac{1}{3}\log{|B|_p}$ unless $v_p(x(Q))\geq \frac{1}{3}v_p(B) = \frac{1}{3}$ --- i.e., unless $v_p(x(Q))\geq 1$. But $$y(Q)^2 = x(Q)^3 + B$$ and $v_p(x(Q))\geq 1, v_p(B) = 1$ implies $v_p(y(Q))\geq 1$, so $v_p(B)\geq 2$, a contradiction. Thus this expression is always equal to $$\frac{1}{3}\log{|B|_p}$$ when $v_p(B) = 1$.

For primes such that $p^2\vert B$ (or $p = 2,3$), Lang also proves that $$|\lambda_p(x(Q)) - \hat{\lambda}_p(x(Q))|\ll -\log{|\Delta|_p}.$$ Thus the sum over these primes is $O(\delta\log{T})$.

Finally, for the infinite prime, Lang proves that $$\lambda_\infty(Q) - \hat{\lambda}_\infty(Q) = \log^+{|x(Q)|} - \log^+{\left(|\Delta|^{-\frac{1}{6}}|x(Q)|\right)}.$$

Therefore, exactly as before, $$\hat{h}(Q) - h(Q) = \log^+{\left(|\Delta|^{-\frac{1}{6}}|x(Q)|\right)} + \frac{1}{6}\log^+{|\Delta|} - \log^+{|x(Q)|} + O(\delta\log{T})$$ by the product formula. Therefore we may simply repeat the proof of Lemma \ref{helfgott-mumford gap principle} verbatim. This completes the ingredients necessary for this family.
\end{proof}

\subsection{$y^2 = x^3 + Ax$}\label{kane thorne curves section}

Now let us move to the family $y^2 = x^3 + Ax$.

\begin{proof}[Proof of Theorem \ref{amazing theorem} for $\mathcal{F}_{B=0}$.]
The family is of size $$\#|\FBT|\asymp T^2.$$

Fixing $y$, since $x, x^2 + A$ are both divisors of $y^2$, the number of $(x,A)$ pairs such that $(x,y,A)$ is a solution is at most the number of pairs of divisors $(d_1, d_2)$ of $y^2$, which is $\tau(y^2)^2\ll_\eps y^\eps$. Therefore the number of $(x,y,A)$ such that $|x|\leq T^{\frac{4}{3} - \eps}$ and $|A|\ll T^2$ is at most (since $|y|\ll T^{2 - \frac{3}{2}\eps}$ in this case) $$\ll \sum_{|y|\ll T^{2 - \frac{3}{2}\eps}} y^{\frac{\eps}{2}}\ll T^{2 - \frac{\eps}{2}}.$$

For $|x|\geq T^{\frac{4}{3}-\eps}$ (so that $|y|\asymp |x|^{\frac{3}{2}}$), fix $x$ and note that if $(x,y,A)$ and $(x,y',A')$ are both solutions and $y, y' > 0$ without loss of generality, then $$y^2 - y'^2 = x(A-A'),$$ so that $$|y-y'|\ll \frac{T^2}{|x|^{\frac{1}{2}}}.$$ Thus all the $|y|$ live in an interval of length $$\ll \frac{T^2}{|x|^{\frac{1}{2}}}.$$ Note also that $y^2\equiv 0\pmod{x}$, which has $\prod_{p\vert x} p^{\floor{\frac{v_p(x)}{2}}}$ solutions modulo $x$. Therefore the number of $y$ given $x$ is at most $$\ll 1 + \frac{T^2\cdot \prod_{p\vert x} p^{\floor{\frac{v_p(x)}{2}}}}{|x|^{\frac{3}{2}}}.$$

Thus, taking these together, the number of solutions $(x,y,A)$ with $|x|\ll T^2$ and $|A|\ll T^2$ is at most
\begin{align*}
&\ll T^{2 - \frac{\eps}{2}} + \sum_{T^{\frac{4}{3} - \eps}\ll |x|\ll T^2} \left(1 + \frac{T^2\cdot \prod_{p\vert x} p^{\floor{\frac{v_p(x)}{2}}}}{|x|^{\frac{3}{2}}}\right)
\\&\ll T^2.
\end{align*}

Thus we have counted small points. For the second ingredient, we would again (with more difficulty) be able to prove a repulsion bound in terms of $h(P)$ and $h(R)$, but estimating the error in this bound for points of small height would give us serious difficulty. Moreover, these methods would not work for the next case where we restrict to square $A$. So we introduce another idea.

First restrict to $A$ that have largest square divisor at most $T^\delta$ and such that $|A|\geq T^{2-\delta}$.\footnote{To do this, see Lemma \ref{better pointwise bound for B=0}.} Note that $j(E_{A,0}) = 1728\in \Z$, so that again Lang applies, whence once $p > 3$ and $v_p(A) = 1$ the difference of local Weil and canonical heights is $$\lambda_p(Q) - \hat{\lambda}_p(Q)  = \log^+{|x(Q)|_p} - \log^+{|A^{-\frac{1}{2}}x(Q)|_p},$$ which is $$\frac{1}{2}\log{|A|_p}$$ unless $$v_p(x(Q))\geq \frac{1}{2}v_p(A) = \frac{1}{2}$$ --- i.e., unless $v_p(x(Q))\geq 1$. In this case the expression is $0$, which we will write as $$\frac{1}{2}\log{|A|_p} - \frac{1}{2}\log{|A|_p}.$$ Again, at $p = 2,3$ or $p$ such that $p^2\vert A$, the difference of local heights is $\ll -\log{|\Delta|_p}$. At the infinite place, as before the contribution to the difference is $$\log^+{|x(Q)|} - \log^+{\left(|A^{-\frac{1}{2}}|\cdot |x(Q)|\right)}.$$

Therefore we have found that (applying the product formula as before)
\begin{align}\label{canonical in terms of weil}
\hat{h}(Q) - h(Q) = \log^+{\left(|\Delta|^{-\frac{1}{6}}\cdot |x(Q)|\right)} + \frac{1}{6}\log{|\Delta|} - \log^+{|x(Q)|} + \frac{1}{2}\sum_{p\vert\vert A, v_p(x(Q))\geq 1} \log{|A|_p} + O(\delta\log{T}).
\end{align}\noindent
Since the $\log{|A|_p}$ terms are simply $-\log{p}$, this gives us a way of getting an upper bound on $\hat{h}$: $$\hat{h}(Q) - h(Q)\leq \log^+{\left(|\Delta|^{-\frac{1}{6}}\cdot |x(Q)|\right)} + \frac{1}{6}\log{|\Delta|} - \log^+{|x(Q)|} + O(\delta\log{T}).$$

Now for the new idea. Let $P\neq \pm R\in E(\Z)$ with $h(P)\geq h(R)\geq 2\log{T}$. Write instead $$\cos{\theta_{P,R}} = \frac{\hat{h}(2P+2R) - \hat{h}(2P) - \hat{h}(2R)}{2\sqrt{\hat{h}(2P)\hat{h}(2R)}}.$$ From the above we have the upper bound $\hat{h}(2P+2R)\leq h(2P+2R) + O(\delta\log{T})$.

Moreover, writing $P =: (x,y)$, if $p\vert\vert A$ and $v_p(x(2P))\geq 1$, then since $$x(2P) = \frac{(3x^2 + A)^2}{4y^2} - 2x = \frac{x^4 - 2Ax^2 + A^2}{4y^2},$$ we see that $p\vert x$. But then $p\vert y$ since $y^2 = x^3 + Ax$. Hence $p^2\vert 4y^2$. Since $v_p(x(2P))\geq 1$, we see that $p^3\vert x^4 - 2Ax^2 + A^2$, whence $p^3\vert A^2$, which is to say $p^2\vert A$, a contradiction. The same holds for $R$, so that we have found (by \eqref{canonical in terms of weil}) that $$\hat{h}(2P) = h(2P) + O(\delta\log{T})$$ and $$\hat{h}(2R) = h(2R) + O(\delta\log{T}).$$ Also note that $h(2P)\leq 4h(P) + O(1)$ since the expression for $x(2P)$ has numerator at most $O(x^4)$ and denominator at most $O(x^3)$, and upon cancelling common terms these estimates still hold.

Finally, let us write out $x(2P + 2R)$ in terms of $x(2P)$ and $x(2R)$. Write $2P =: \left(\frac{\alpha}{\beta^2}, \frac{\tilde{\alpha}}{\beta^3}\right)$ and $2R =: \left(\frac{\alpha'}{\beta'^2}, \frac{\tilde{\alpha}'}{\beta'^3}\right)$. Recall that $|x(2P)|\asymp |x(P)|$ and similarly for $R$ since $|x(P)|, |x(R)|\geq 10^{10}T$. Thus certainly $|\alpha|\geq |\beta|^2$ and similarly for $R$, so that $H(P) = |\alpha|$ and $H(R) = |\alpha'|$. Moreover for the same reason $|y(2P)|\asymp |x(2P)|^{\frac{3}{2}}$, so that $|\tilde{\alpha}|\asymp |\alpha|^{\frac{3}{2}} = H(\alpha)^{\frac{3}{2}}$ and similarly for $R$.

Now
\begin{align*}
x(2P + 2R) &= \frac{x(2P)^2 x(2R) + x(2P) x(2R)^2 + 2 y(2P) y(2R) + A x(2P) + A x(2R)}{(x(2P) - x(2R))^2}
\\&= \frac{\alpha^2 \alpha' \beta'^2 + \alpha \alpha'^2 \beta^2 + 2\tilde{\alpha}\tilde{\alpha}'\beta\beta' + A\alpha\beta^2\beta'^4 + A\alpha'\beta^4\beta'^2}{(\alpha - \alpha')^2}.
\end{align*}
By using the first expression and the fact that $|x(2P)|\asymp |x(P)|$ (and similarly for $R$) it follows that the first term in the numerator is the largest (up to $O(1)$) among those in the numerator or denominator since $h(P)\geq h(R)$. Therefore
\begin{align*}
H(2P+2R)&\ll |\alpha|^2 |\alpha'| |\beta'|^2
\\&= \frac{H(2P)^2 H(2R)^2}{|x(2R)|}
\\&\asymp \frac{H(2P)^2 H(2R)^2}{|x(R)|}
\\&= \frac{H(2P)^2 H(2R)^2}{H(R)},
\end{align*}
which is to say $h(2P + 2R)\leq 2h(2P) + 2h(2R) - h(R) + O(1)$. Since $4h(R)\geq h(2R) - O(1)$, this reduces to $$h(2P + 2R)\leq 2h(2P) + \frac{7}{4}h(2R) + O(1).$$

Therefore, putting these together and arguing as in Lemma \ref{helfgott-mumford gap principle}, we find that
\begin{align*}
\cos{\theta_{P,R}} &= \frac{\hat{h}(2P+2R) - \hat{h}(2P) - \hat{h}(2R)}{2\sqrt{\hat{h}(2P)\hat{h}(2R)}}
\\&\leq \frac{\hat{h}(2P+2R) - h(2R) - h(2R) + O(\delta\log{T})}{2\sqrt{h(2P)h(2R)}}
\\&\leq \frac{1}{2}\sqrt{\frac{h(2P)}{h(2R)}} + \frac{3}{8}\sqrt{\frac{h(2R)}{h(2P)}} + \frac{O(\delta\log{T})}{2\sqrt{h(2P)h(2R)}}.
\end{align*}

Now suppose we could show any nontrivial (i.e., not $x = 0$) rational point on $y^2 = x^3 + Ax$ must have height at least $c\log{T}$ for some $c\gg 1$ a (very small) positive constant. Then this upper bound would read:
\begin{align*}
\cos{\theta_{P,R}}\leq \frac{1}{2}\sqrt{\frac{h(2P)}{h(2R)}} + \frac{3}{8}\sqrt{\frac{h(2R)}{h(2P)}} + O(\delta).
\end{align*}

Hence this would complete the proof of the second necessary ingredient, since $\frac{1}{2} + \frac{3}{8} = \frac{7}{8} = 0.875 < 0.88$. This is because the number of points $P$ with $h(P)\geq 2\log{T} + O(1)$ and $h(2P)\in [X,(1+\gamma)X]$ is then $$\ll \gamma^{-1}\cdot 3^{\rank(E)}$$ once $\gamma\ll 1$. Hence since $$h(2P)\geq c\log{T}\gg \log{T},$$ the number of points $P$ with $h(P)\geq 2\log{T} + O(1)$ and $h(2P)\leq M\log{T}$ is $$\ll \log{(M)}\cdot 3^{\rank(E)}.$$ But $h(P)\geq \frac{1}{4} h(2P) - O(1)$, so the number of points with $h(P)\leq M\log{T}$ is in fact also $\ll \log{(M)}\cdot 3^{\rank(E)}$, which is all we need to conclude the argument.

Thus it suffices to show that the smallest nontrivial rational point has height at least $c\log{T}$ for some positive $c\gg 1$. Actually it suffices to do this for a large enough subfamily of curves, by the usual H\"{o}lder, Helfgott-Venkatesh, and then Bhargava-Shankar-type procedure.\footnote{Again, see Lemma \ref{better pointwise bound for B=0} for details.} We will show that the density of curves with a nontrivial rational point of multiplicative height smaller than $T^{\frac{1}{100}} =: T^c$ is $T^{-\Omega(1)}$.

Now if $\left(\frac{m}{n^2}, \frac{m'}{n^3}\right)$ is a point on $y^2 = x^3 + Ax$ with $|m|\leq T^c, |n|\leq T^{\frac{c}{2}}$, then $(m,m')$ is an integral point on $y^2 = x^3 + An^4 x$ with $|m|\leq T^c$. Note that $|m'|\ll T^{1+\frac{3}{2}c}$. The number of such pairs $(m,m')$ is at most $T^{1+3c}$. Moreover since $(m,m')$ determine $A$ and $n$ (up to sign) since $A$ is fourth-power free by minimality, we see that the number of $A$ with $E_{A,0}$ with a nontrivial rational point of height at most $T^c$ is at most the number of such rational points on an $E_{A,0}$ for some $A$, which is at most the number of $(m,m')$ pairs, which is at most $T^{1+3c}$. Thus the density is $T^{-1+3c}$, which is of the desired shape.

This completes the argument.
\end{proof}

Having proven this, let us now explain how to use the methods of Kane \cite{kane} and Kane-Thorne \cite{kanethorne} to deduce Corollary \ref{B=0 theorem}. We will freely use their notation throughout, and for ease of reading one should at least go through their arguments to understand the effects of our modifications.

\begin{proof}[Proof of Corollary \ref{B=0 theorem}]
Let us quickly show that to control an average of e.g.\ $2^{k\cdot \rank(E)}$ it suffices to control moments of Selmer groups on the curves. Let $\phi_A: E_{A,0}\to E_{-4A,0}$ be the $2$-isogenies on the curves. Let $\Sel_{\phi_A}(E_{A,0})$ be the associated Selmer groups. Note that the isogeny dual to $\phi_A$ is simply $\phi_{-4A}$. Hence $\phi_{-4A}\circ \phi_A = 2\cdot$, multiplication by $2$ on $E_{A,0}$. The following Lemma (combined with Cauchy-Schwarz) shows that to control the average of $2^{k\cdot \rank(E)}\leq \#|\Sel_2(E)|^k$ it is enough to control the moments of $\#|\Sel_{\phi_A}(E_{A,0})|$.

\begin{lem}\label{selmer groups lemma}
Let $E\xrightarrow{\alpha} E'\xrightarrow{\beta} E''$ be a sequence of isogenies between elliptic curves over $\Q$. Then $$\#|\Sel_{\beta\circ \alpha}(E)|\leq \#|\Sel_\alpha(E)|\cdot \#|\Sel_\beta(E')|.$$
\end{lem}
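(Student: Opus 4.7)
The plan is to apply the snake lemma (or long exact sequence in cohomology) to the short exact sequence of $G_\Q$-modules
\begin{equation*}
0 \to E[\alpha] \to E[\beta\circ\alpha] \xrightarrow{\alpha} E'[\beta] \to 0,
\end{equation*}
which is exact because $\alpha$ restricted to $E[\beta\circ\alpha]$ has image exactly $E'[\beta]$ and kernel $E[\alpha]$. Taking Galois cohomology over $\Q$ yields
\begin{equation*}
H^1(\Q, E[\alpha]) \xrightarrow{i} H^1(\Q, E[\beta\circ\alpha]) \xrightarrow{\alpha_*} H^1(\Q, E'[\beta]),
\end{equation*}
and the analogous local sequences at each place $v$ are compatible with this one.

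Next I would check that the horizontal maps respect the Selmer conditions, i.e.\ that $i$ carries $\Sel_\alpha(E)$ into $\Sel_{\beta\circ\alpha}(E)$ and $\alpha_*$ carries $\Sel_{\beta\circ\alpha}(E)$ into $\Sel_\beta(E')$. For $i$, the point is the commutative diagram of Kummer sequences induced by the identity $E \to E$ together with $\beta\colon E' \to E''$; for $\alpha_*$, the relevant commutative diagram is induced by $\alpha\colon E\to E'$ together with the identity on $E''$. In both cases a class in the source Selmer group comes locally from a point in $E''(\Q_v)$ (respectively $E'(\Q_v)$) via the appropriate connecting homomorphism, and the diagram chase shows that the image comes locally from the same point via the target's connecting homomorphism.

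The heart of the argument is exactness in the middle after restricting to Selmer groups: I will show that if $c \in \Sel_{\beta\circ\alpha}(E)$ with $\alpha_*(c) = 0$, then $c$ lifts to $\Sel_\alpha(E)$ under $i$, giving the exact sequence
\begin{equation*}
\Sel_\alpha(E) \xrightarrow{i} \Sel_{\beta\circ\alpha}(E) \xrightarrow{\alpha_*} \Sel_\beta(E').
\end{equation*}
Global exactness of the long cohomology sequence already furnishes $c' \in H^1(\Q, E[\alpha])$ with $i(c') = c$; one then checks place-by-place that $c'$ may be chosen to lie in $\Sel_\alpha(E)$. Locally at $v$, write $c|_v = \delta_{\beta\circ\alpha}(P_v)$ for some $P_v \in E''(\Q_v)$; since $\delta_\beta(P_v) = \alpha_*(c)|_v = 0$, we have $P_v = \beta(Q_v)$ for some $Q_v \in E'(\Q_v)$. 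Choosing any $\tilde Q \in E(\overline{\Q_v})$ with $\alpha(\tilde Q) = Q_v$, the cocycle $\sigma \mapsto \sigma\tilde Q - \tilde Q$ is $E[\alpha]$-valued and represents both $\delta_\alpha(Q_v) \in H^1(\Q_v, E[\alpha])$ and, after pushing forward by $i$, the class $c|_v$; so $c'|_v \in \im(\delta_\alpha)$ up to the ambiguity $\ker(i|_v)$. Tracking this ambiguity carefully shows that $c'$ can be adjusted by an element of $\ker(i)$ so that $c'|_v \in \im(\delta_\alpha^v)$ for every $v$ simultaneously, i.e.\ $c' \in \Sel_\alpha(E)$. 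The inequality $\#|\Sel_{\beta\circ\alpha}(E)|\leq \#|\Sel_\alpha(E)|\cdot \#|\Sel_\beta(E')|$ then follows from this three-term exact sequence.

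The main obstacle is the bookkeeping in the final step: the lift $c'$ is only determined up to $\ker(i)$, and one must confirm that the local Kummer-cocycle identifications can be carried out with a single global lift rather than only place-by-place. Standard comparison of the global long exact sequence with its local analogues (together with finiteness of $\ker(i)$) handles this, but it is the only point requiring genuine care.
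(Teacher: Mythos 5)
Your setup is the same as the paper's: the exact sequence $0 \to E[\alpha] \to E[\beta\circ\alpha] \xrightarrow{\alpha} E'[\beta] \to 0$, its long exact cohomology sequence, the check that $i$ and $\alpha_*$ respect the Selmer conditions (minor imprecision: under $i$ the image class comes locally from $\beta(Q_v)\in E''(\Q_v)$ via $\delta_{\beta\circ\alpha,v}$, not from ``the same point'' $Q_v$), and the reduction of the inequality to exactness of $\Sel_\alpha(E) \to \Sel_{\beta\circ\alpha}(E) \to \Sel_\beta(E')$ at the middle term. Everything up to the point where you produce a global lift $c'$ with $i(c')=c$ and observe that $c'|_v - \delta_{\alpha,v}(Q_v) \in \ker(i_v)$ is correct.

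The gap is in how you close that step. You assert that $c'$ ``can be adjusted by an element of $\ker(i)$ so that $c'|_v \in \im(\delta_{\alpha,v})$ for every $v$ simultaneously,'' citing a comparison of global and local sequences and finiteness of $\ker(i)$. As stated this is not a proof: a single global class in $\ker(i)$ has no reason to repair all places at once (different $v$ could a priori demand different corrections), and finiteness of $\ker(i)$ is irrelevant. The missing observation is that no adjustment is needed at all: $\ker(i_v)$ is the image of the connecting map $H^0(\Q_v, E'[\beta]) \to H^1(\Q_v, E[\alpha])$, and that connecting map is precisely the local Kummer map $\delta_{\alpha,v}$ restricted to the $\beta$-torsion points of $E'(\Q_v)$; hence $\ker(i_v) \subseteq \im(\delta_{\alpha,v})$, and since $\im(\delta_{\alpha,v})$ is a subgroup, $c'|_v \in \delta_{\alpha,v}(Q_v) + \ker(i_v) \subseteq \im(\delta_{\alpha,v})$ for every $v$, so any global lift already lies in $\Sel_\alpha(E)$. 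Even more directly --- and this is the paper's one-line justification --- $\im(\delta_{\alpha,v})$ and $\im(\delta_{\beta\circ\alpha,v})$ are the kernels of the maps to $H^1(\Q_v, E)$ from $H^1(\Q_v, E[\alpha])$ and $H^1(\Q_v, E[\beta\circ\alpha])$ respectively, and these maps are compatible because both are induced by inclusion into the same curve $E$; since $c \in \Sel_{\beta\circ\alpha}(E)$ is locally trivial in every $H^1(\Q_v, E)$, so is $c'|_v$, giving $c' \in \Sel_\alpha(E)$. With that one observation supplied, your argument is complete and coincides with the paper's.
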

\begin{proof}
Consider the long exact sequence in Galois cohomology associated to $0\to \ker{\alpha}\to \ker{(\beta\circ \alpha)}\to \ker{\beta}\to 0$. It induces a sequence $\Sel_\alpha(E)\to \Sel_{\beta\circ\alpha}(E)\to \Sel_\beta(E')$ which is exact at the middle term. (Surjection onto the kernel follows from exactness on $H^1$ and the fact that the left-hand map is induced by the identity map $E\to E$ so only locally trivial classes map to one another.) The result follows.
\end{proof}\noindent
Hence we will concentrate on bounding moments of $\#|\Sel_{\phi_A}(E_{A,0})|$, as Kane-Thorne do.

The next claim is that for this family we may improve Lemma \ref{using the pointwise bound} to:
\begin{lem}\label{better pointwise bound for B=0}
Let $\mathcal{G}\subseteq \mathcal{F}_{B=0}^{\leq T}$. Then, for all $\eps > 0$, $$\sum_{E\in \mathcal{G}} \#|E(\Z)|\ll \#|\mathcal{F}_{\mathrm{B=0}}^{\leq T}|\cdot \left(\frac{\#|\mathcal{G}|}{\#|\mathcal{F}_{\mathrm{B=0}}^{\leq T}|}\right)^{\Omega(1)}\cdot (\log{T})^{O(1)}.$$
\end{lem}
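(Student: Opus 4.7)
The plan is to mimic the proof of Lemma \ref{using the pointwise bound}, taking advantage of the fact that on this family $\Delta_{A,0} = -64A^3$, so $\omega(\Delta_{A,0}) = \omega(A) + O(1)$, and this quantity satisfies much stronger moment bounds on average than the worst-case bound $\omega(n) \ll \log n/\log\log n$ used for $\mathcal{F}_{\mathrm{universal}}$. Specifically, by the Selberg--Delange method (or directly from the Dirichlet series $\sum_n C^{\omega(n)} n^{-s} \sim \zeta(s)^C$), one has $\sum_{A\leq X} C^{\omega(A)} \ll_C X(\log X)^{C-1}$, which is polylogarithmic rather than $\exp(O(\log X/\log\log X))$.

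First I apply H\"older's inequality in the form used in Lemma \ref{using the pointwise bound}: it suffices to show that
$$\Avg_{E\in \mathcal{F}_{B=0}^{\leq T}}\bigl(\#|E(\Z)|^{1+\eta}\bigr) \ll (\log T)^{O(1)}$$
for some fixed small $\eta > 0$. Next, by Helfgott--Venkatesh applied pointwise,
$$\#|E_{A,0}(\Z)|^{1+\eta} \ll (\log T)^{O(1)} \cdot C^{\omega(A)} \cdot 1.35^{(1+\eta)\rank(E_{A,0})}$$
for some absolute constant $C = C(\eta)$. Apply Cauchy--Schwarz to the sum over $|A|\ll T^2$:
$$\sum_{A} C^{\omega(A)} \cdot 1.35^{(1+\eta)\rank(E_{A,0})} \leq \left(\sum_{A} C^{2\omega(A)}\right)^{1/2} \left(\sum_{A} 1.35^{2(1+\eta)\rank(E_{A,0})}\right)^{1/2}.$$
The first factor is $\ll T (\log T)^{O(1)}$ by the Selberg--Delange estimate recalled above.

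For the second factor, choose $\eta$ small enough that $1.35^{2(1+\eta)} < 2$; then, since $2^{\rank(E_{A,0})} \leq \#|\Sel_2(E_{A,0})|$ by Galois cohomology, and since the average of $\#|\Sel_2(E_{A,0})|$ over $\mathcal{F}_{B=0}^{\leq T}$ is $O(1)$ (this follows either from Fouvry's work on $2$-Selmer groups in this family, or equivalently from the Kane--Thorne-type extension carried out later in Section \ref{kane thorne curves section} of the present paper, combined with Lemma \ref{selmer groups lemma} to pass from the $\phi$-Selmer groups to $\Sel_2$), the second factor is $\ll T^{1/2}$. Multiplying, $\sum_{E\in \mathcal{F}_{B=0}^{\leq T}} \#|E(\Z)|^{1+\eta} \ll T^2(\log T)^{O(1)} \asymp \#|\mathcal{F}_{B=0}^{\leq T}|\cdot (\log T)^{O(1)}$, and then a second application of H\"older's inequality restricting to $\mathcal{G}$ yields the stated conclusion.

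The only non-routine input is the average Selmer bound for this family; but this is precisely what the paper develops later in Section \ref{kane thorne curves section} and already exists in the literature (Fouvry \cite{fouvry}). Everything else is either the pointwise estimate of Helfgott--Venkatesh or a standard mean-value estimate for $C^{\omega(n)}$, so the main work in writing this up will simply be to cite these results correctly and to verify that $\eta$ can be taken small enough that $1.35^{2(1+\eta)} < 2$, which it can since $1.35^2 = 1.8225 < 2$.
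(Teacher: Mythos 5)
Your overall structure (H\"older reduction to a $(1+\eta)$-moment, then Helfgott--Venkatesh with $\omega(\Delta_{A,0})=\omega(A)+O(1)$, then a polylogarithmic mean value for $O(1)^{\omega(A)}$) is the right shape, but the way you handle the rank factor has a genuine gap. You split off $1.35^{2(1+\eta)\rank(E_{A,0})}\leq \#|\Sel_2(E_{A,0})|$ by Cauchy--Schwarz and then assert that the average of $\#|\Sel_2|$ over the \emph{full} family $\mathcal{F}_{B=0}^{\leq T}$ is $O(1)$, citing Fouvry or ``the Kane--Thorne-type extension carried out later'' in Section \ref{kane thorne curves section}. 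Neither citation supports this. Fouvry's result concerns the Mordell family $y^2=x^3+B$ (it is invoked in the paper only for $\mathcal{F}_{A=0}$), not $y^2=x^3+Ax$. And the Kane--Thorne extension in Section \ref{kane thorne curves section} is circular here: its very first step is to use Lemma \ref{better pointwise bound for B=0} to discard a set of density $O((\log T)^{-M})$ and restrict to the subfamily $\widetilde{\mathcal{F}}_{B=0}$ (with $\omega(A)\leq M\log\log A$, small squarefull part, no bad Siegel-zero modulus), and even then it only controls moments of $k^{\rank}$ over that subfamily, not an average of $\#|\Sel_2|$ over all fourth-power-free $A$. So the key input of your second factor is either misattributed or presupposes the lemma you are trying to prove.

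The paper's proof avoids any average Selmer or rank input altogether, and this is the fix you should adopt: for $E_{A,0}$ one has the \emph{pointwise} bound $\rank(E_{A,0})\ll \omega(A)$ coming from descent by the $2$-isogeny $\phi_A$ (Proposition 4.9 of Chapter X of \cite{silvermanarithmeticofellipticcurves}), since both $\Sel_{\phi_A}$ and $\Sel_{\phi_{-4A}}$ embed in groups of size $O(1)^{\omega(A)}$. Hence $1.35^{(1+\eta)\rank(E_{A,0})}\ll O(1)^{\omega(A)}$, and the whole Helfgott--Venkatesh bound collapses to $(\log T)^{O(1)}\cdot O(1)^{\omega(A)}$; no Cauchy--Schwarz is needed, and the proof finishes with the single mean-value estimate $\sum_{n\leq X} O(1)^{\omega(n)}\ll X(\log X)^{O(1)}$ applied with $X\asymp T^2$, exactly as in the modification of Lemma \ref{using the pointwise bound} described in the paper. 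Your Selberg--Delange step and the final H\"older restriction to $\mathcal{G}$ are fine and can be kept verbatim once the rank factor is handled this way.
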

\begin{proof}
The only change in the proof of Lemma \ref{using the pointwise bound} is that $\omega(\Delta)$ is replaced by $\omega(A)$ and now we may use the bound $\rank(E_{A,0})\ll \omega(A)$ as well (this comes from a descent by $2$-isogeny: see Proposition 4.9 in Chapter X, Section 4 of \cite{silvermanarithmeticofellipticcurves}). Instead of using the bound $\omega(\Delta)\ll \frac{\log{T}}{\log\log{T}}$, we instead use $\sum_{n\leq X} O(1)^{\omega(n)}\ll X(\log{X})^{O(1)}$.
\end{proof}

Hence we may restrict to a subfamily of density $1 - O\left((\log{T})^{-M}\right)$ once $M\gg 1$. Hence we may further impose the restriction that $\omega(A)\leq M\log\log{A}$ for some sufficiently large constant $M$ on our curves (on top of the usual restriction that $A$ have non-squarefree part at most $T^\delta$), since the number of $n\leq X$ with $m$ prime factors is at most $$\ll \frac{X}{\log{X}}\cdot \frac{(\log\log{X}+O(1))^m}{m!}.$$ Moreover, suppose there is a real character $\chi$ of modulus $D\ll T$ with $L(s,\chi)$ having a real zero $\beta_\chi$ with $1 - \beta_\chi\leq (\log{T})^\delta$. Then since (by Siegel's theorem on Siegel zeroes) $1 - \beta_\chi\gg_\eps D^{-\eps}$ for all $\eps > 0$, we find that $D\gg_\delta (\log{T})^{M+1}$, for instance. Hence once $T\gg_\delta 1$ (with ineffective implied constant) we may remove all $A$ divisible by $D$ as well. As Kane notes on page 17 of \cite{kane}, this implies $1 - \beta_\chi\gg (\log{T})^{-1}$ for any real zeroes $\beta_\chi$ of $L(s,\chi)$ with $\chi$ of modulus not divisible by $D$ and at most $T$.

Call the resulting subfamily $\widetilde{\mathcal{F}}_{B=0}\subseteq \mathcal{F}_{B=0}$. Let us now indicate the necessary changes to Kane's argument in \cite{kane} in order to get a bound of shape $$\limsup_{T\to\infty}\Avg_{E\in \widetilde{\mathcal{F}}_{B=0}^{\leq T}}(k^{\rank(E)})\ll O(1)^{(\log{k})^2}.$$ We first fix a positive integer $F\leq T^\delta$ such that $p\vert F\implies p^2\vert F$ for all primes $p > 2$ and restrict our attention to the subfamily of $D$ with $F = 2^{v_2(D)}\mathrm{sq}(D) := 2^{v_2(D)}\prod_{p^2\vert D : p > 2} p^{v_p(D)}$. The claim is that the restrictions $\frac{\log\log{N}}{2} < n < 2\log\log{N}$ may be replaced by $n < M\log\log{N}$, where $M$ is the sufficiently large constant arising in the definition of $\widetilde{\mathcal{F}}_{B=0}$. To prove this, we change the following in Kane's argument. In Proposition 11 we replace $O\left(\frac{N}{\sqrt{\log\log{N}}}\right)$ by $\max_{\tilde{n}\leq n} \pi_{\tilde{n}}(N)$, where $\pi_{\tilde{n}}(N)$ is the number of integers in $[1,N]$ with exactly $\tilde{n}$ prime factors. This improves Lemma 17 to a bound of shape $$\ll \max_{\tilde{n}\leq n} \pi_{\tilde{n}}(N)\cdot \left(\left(\frac{O(\log\log{B})}{L}\right)^k + \cdots\right).$$ In the proof of Proposition 9 we instead obtain a bound of shape $$\ll O(1)^k\cdot \left(\max_{\tilde{n}\leq n} \pi_{\tilde{n}}(N)\right)\cdot \left(\left(\frac{\eps\log\log{N}}{n}\right)^k + (\log{N})^{-C}\right).$$

If $n\gg \log\log\log{N}$ and $N\gg_{c,k} 1$, then this is $\ll N\cdot c^m$, as in Kane. If $n\ll \log\log\log{N}$, then
\begin{align*}
\max_{\tilde{n}\leq n} \pi_{\tilde{n}}(N) = \pi_n(N)&\asymp \frac{(\log\log{N})^n}{n!}\cdot \frac{N}{\log{N}} \\&\ll \frac{N}{\log{N}}\cdot O(1)^{(\log\log\log{N})^2}.
\end{align*}
Hence the resulting bound in this case is
\begin{align}
&\ll \frac{N}{\log{N}}\cdot O(1)^{(\log\log\log{N})^2}\left(\left(\frac{\log\log{N}}{n}\right)^k + 1\right)\nonumber
\\&\ll \frac{N}{\log{N}}\cdot O(1)^{(\log\log\log{N})^2},\label{very few prime factors bound}
\end{align}\noindent
since $k\leq n\ll \log\log\log{N}$. This is again $\ll N\cdot c^n\ll N\cdot c^m$ once $N\gg_c 1$ since $N\cdot c^n\gg N(\log\log{N})^{-O(\log{c})}$.

Thus we have the necessary improvement to Kane's Proposition 9 to feed into the analysis in Kane-Thorne. As they note, the contribution of terms with $m > 0$ is (once $N\gg_k 1$ and e.g.\ $c = 2^{-2k-1}$)
\begin{align*}
&\ll_k N 2^{-kn} \sum_{m=1}^n {n\choose m} (2^k-1)^{n-m} 4^{km} c^m \omega(F)
\\&\leq N\cdot (1-2^{-k-1})^n \omega(F)
\\&\leq N\cdot (\log{N})^{-\Omega(2^{-k})}\cdot \omega(F)
\end{align*}\noindent
if $n\gg \log\log\log{N}$. When $n\ll \log\log\log{N}$ we use the stronger bound in \eqref{very few prime factors bound} to obtain
\begin{align*}
&\ll_k \frac{N}{\log{N}} O(1)^{(\log\log\log{N})^2} 2^{-kn} \sum_{m=1}^n {n\choose m} (2^k-1)^{n-m} 4^{km} \omega(F)
\\&\leq \frac{N}{\log{N}} O(1)^{(\log\log\log{N})^2} O(1)^{kn} \omega(F)
\\&\ll N\cdot (\log{N})^{2^{-1}}\cdot \omega(F)
\end{align*}\noindent
once $N\gg_k 1$. So we may ignore the terms with $m > 0$.

Also, as in Kane-Thorne, the sum over terms with $m=0$ is $$\ll O(1)^{k^2}\cdot O(1)^{\omega(F)}\cdot \#|\{|x|\leq N : F\vert x, \omega(x) = n, 2^{v_p(x)}\mathrm{sq}(x) = F\}|,$$ where $\mathrm{sq}(x)$ is the ``odd squarefull'' part of $x$: $$\mathrm{sq}(x) = \prod_{p^2\vert x : p>2} p^{v_p(x)}.$$

Summing over all $n\ll \log\log{N}$, we find that the sum of $2^{k\cdot \rank(E)}$ over those $E$ with $2^{v_2(D)}\prod_{p^2\vert D} p^{v_p(D)} = F$ is
\begin{align*}
&\ll O(1)^{k^2}\cdot O(1)^{\omega(F)}\cdot \#|\{|x|\leq N : F\vert x, 2^{v_2(x)}\mathrm{sq}(x) = F\}|
\\&\ll O(1)^{k^2}\cdot \frac{O(1)^{\omega(F)}}{F}\cdot \#|\{|x|\leq N : 2^{v_2(x)}\mathrm{sq}(x)\leq T^\delta\}|,
\end{align*}
whence the contribution to the average of those $D$ with ``even/squarefull part'' $F$ is $\ll O(1)^{k^2}\cdot \frac{O(1)^{\omega(F)}}{F}$.

Summing over $F\leq T^\delta$ such that $p\vert F\implies p^2\vert F$ for all $p > 2$ gives the result. Indeed, $$\sum_{F\leq T^\delta: F^{\mathrm{odd}}\textrm{ squarefull}} \frac{O(1)^{\omega(F)}}{F}\ll 1.$$
\end{proof}

\subsection{$y^2 = x^3 - D^2 x$}\label{congruent number curves subsection}\ 

Finally, we handle the congruent number curves.

\begin{proof}[Proof of Theorem \ref{amazing theorem} for $\mathcal{F}_{\mathrm{congruent}}$.]
The family is of size $$\#|\FcongT|\asymp T.$$

First, the small points. We will in fact drop the restriction that $D$ be squarefree when counting the small points since it will not be necessary, but we may, and will, assume $|D|\geq T^{1-\delta}$. Fix $x\neq 0$. Break up the set of solutions $(x,y,D)$ with $y,D > 0$ and $D\neq \pm x$ (without loss of generality) into two classes: those with $|D - |x||\leq T^{\frac{1}{3}}|x|^{\frac{1}{3}}$ and those with $|D - |x|| > T^{\frac{2}{3}}$.

Let $(y,D),(y',D')$ be two solutions. As usual, by taking differences, $$|y-y'|\ll \frac{|D - D'||x|}{|y|}.$$ Now $$|x(x-D)(x+D)|\gg |x||D||D - |x||,$$ so $$|y|\gg |x|^{\frac{1}{2}}|D|^{\frac{1}{2}}|D - |x||^{\frac{1}{2}}.$$ Thus $$|y-y'|\ll \frac{|D - D'||x|^{\frac{1}{2}}}{|D - |x||^{\frac{1}{2}}|D|^{\frac{1}{2}}}.$$

Hence if $(x,y,D)$ and $(x,y',D')$ are solutions of the first class, then $D$ and $D'$ are close, so that $$|y-y'|\ll T^{\frac{1}{3}}|x|^{\frac{1}{3}}.$$ If $(x,y,D)$ and $(x,y',D')$ are solutions of the second class and $D$ is maximal among all such solutions, then $$|y-y'|\ll D^{\frac{1}{2}}|x|^{\frac{1}{2}}T^{-\frac{1}{6}}|x|^{-\frac{1}{6}}\ll T^{\frac{1}{3}}|x|^{\frac{1}{3}}.$$ Thus in general $$|y-y'|\ll T^{\frac{1}{3}}|x|^{\frac{1}{3}}.$$

Now also $y^2\equiv 0\pmod{x}$, which has $\prod_{p\vert x} p^{\floor{\frac{v_p(x)}{2}}}$ solutions modulo $x$. Therefore since the $y$ for which there exists a $D$ making $(x,y,D)$ a solution all lie in at most four intervals (depending on sign and class) of length at most $\ll T^{\frac{1}{3}}|x|^{\frac{1}{3}}$ and since $(x,y)$ determine $\pm D$, we find that there are at most $$\ll 1 + \frac{T^{\frac{1}{3}}\prod_{p\vert x} p^{\floor{\frac{v_p(x)}{2}}}}{|x|^{\frac{2}{3}}}$$ solutions with fixed $x$.

Therefore we find that the number of $(x,y,D)$ with $|x|\leq 10^{10}T$ and $|D|\ll T$ is at most $$\ll \sum_{|x|\leq 10^{10}T} 1 + \frac{T^{\frac{1}{3}}\prod_{p\vert x} p^{\floor{\frac{v_p(x)}{2}}}}{|x|^{\frac{2}{3}}}.$$

But the Dirichlet series
\begin{align*}
\sum_{n\geq 1} \frac{\prod_{p\vert n} p^{\floor{\frac{v_p(n)}{2}}}}{n^{s+\frac{2}{3}}} &= \prod_p (1 + p^{-s-\frac{2}{3}} + p^{-2s-\frac{1}{3}} + p^{-3s-1} + \cdots)
\\&= \prod_p \frac{1 + p^{-s-\frac{2}{3}}}{1 - p^{-2s-\frac{1}{3}}}
\\&= \frac{\zeta\left(2s + \frac{1}{3}\right)\zeta\left(s + \frac{2}{3}\right)}{\zeta\left(2s + \frac{4}{3}\right)^2}
\end{align*}
has its rightmost pole at $s = \frac{1}{3}$, of order two. Thus $$\sum_{n\ll T} \frac{\prod_{p\vert n} p^{\floor{\frac{v_p(n)}{2}}}}{n^{\frac{2}{3}}}\ll T^{\frac{1}{3}}\log{T},$$ whence $$\sum_{|x|\leq 10^{10}T} 1 + \frac{T^{\frac{1}{3}}\prod_{p\vert x} p^{\floor{\frac{v_p(x)}{2}}}}{|x|^{\frac{2}{3}}}\ll T + T^{\frac{2}{3}}\log{T},$$ which finishes the small point counting.\footnote{In fact, by using Proposition 1 in \cite{leboudec}, we may count small points of height $\ll T^2 (\log{T})^{-O(1)}$ instead of $\ll T$!}

Now for the repulsion estimate. The argument is exactly the same as in the case $y^2 = x^3 + Ax$ --- the only difference is that in the beginning of the argument we derive $v_p(x(Q))\geq v_p(D)$ rather than $\frac{1}{2}v_p(A)$, but we only use the consequence that this implies $v_p(x(Q))\geq 1$. The rest goes through completely, so that it suffices to show that on a density $1 - T^{-\Omega(1)}$ subfamily there are no nontrivial rational points of height smaller than $c\log{T}$ for some (small) positive constant $c$, by the same argument as in the case $y^2 = x^3 + Ax$. We will again take $c := \frac{1}{100}$.

But, as before, a rational point $\left(\frac{m}{n^2}, \frac{m'}{n^3}\right)$ with $|m|\leq T^c, |n|\leq T^{\frac{c}{2}}$ on $y^2 = x^3 - D^2 x$ corresponds to the integral point $(m,m')$ on $y^2 = x^3 - (Dn^2)^2 x$. Write $\tilde{D} := Dn^2$ --- note that the information of $\tilde{D}$ is equivalent to that of $(D,n^2)$ since $D$ is taken to be squarefree. Note also that in this case $|m'|\asymp |\tilde{D}| |x|^{\frac{1}{2}}$.

Now fix $m$. From the same argument as in the small point counting above (as we noted, we didn't need $D$ squarefree), we find that the number of $(m',\tilde{D})$ is at most $$\ll 1 + \frac{T^{\frac{1+c-\eps}{2}}\prod_{p\vert m} p^{\floor{\frac{v_p(m)}{2}}}}{|m|^{\frac{1}{2}}}.$$ Summing this up to $|m|\leq T^c$ gives a bound on the number of very small rational points on these curves of $$\ll T^c + T^{\frac{1}{2} + c - \frac{\eps}{4}},$$ which completes the argument.
\end{proof}

To deduce Corollary \ref{congruent number theorem} we will have a slightly easier time than we did for Corollary \ref{B=0 theorem}, since Heath-Brown's methods in \cite{heathbrowntwiststwo} control the moments of $2^{\rank(E)}$ over the family quite well. Again, we use his notation freely throughout, and urge the reader to go through the original argument to understand our modifications.

\begin{proof}[Proof of Corollary \ref{congruent number theorem}.]
Theorem 1 of Heath-Brown \cite{heathbrowntwiststwo} gives us the claimed bound $$\limsup_{T\to\infty}\Avg_{E\in \mathcal{F}_{\mathrm{congruent, odd}}^{\leq T}}(k^{\rank(E)})\ll O(1)^{(\log{k})^2}$$ over the subfamily $\mathcal{F}_{\mathrm{congruent, odd}}\subseteq \mathcal{F}_{\mathrm{congruent}}$ of curves $y^2 = x^3 - D^2 x$ with $D$ \emph{odd}. But extending this to $D\equiv 2\pmod{4}$ (recall $D$ is restricted to be squarefree) is no problem, since we only need an \emph{upper bound} on the average of (in Heath-Brown's notation) $2^{k\cdot s(D)}$ of shape $O(1)^{k^2}$, where $s(D)$ is the $2$-Selmer rank of $y^2 = x^3 - D^2 x$. Specifically, for these $D$ Heath-Brown's quadratic form $P$ controlling the appearance of a Legendre symbol does not change --- in fact we need only change $R$, which does not affect the shape of the upper bound.

Let us indicate the necessary changes in the argument. Lemma 1 of \cite{heathbrowntwiststwo} changes into an upper bound of shape (here $D = 2^{v_2(D)}\cdot D^{\textrm{odd}}$):
\begin{align*}
2^{s(D)}\leq \sum_{D^{\textrm{odd}} = \prod_{1\leq i\leq 4, 0\leq j\leq 4, i\neq j} D_{ij}}& \left(\frac{-1}{\alpha}\right)\left(\frac{2}{\beta}\right)\prod_{i=1}^4 4^{-\omega(D_{i0})} \prod_{0\leq j\leq 4, j\neq i} 4^{-\omega(D_{ij})} \prod_{k\neq i,j}\prod_\ell \left(\frac{D_{k\ell}}{D_{ij}}\right)\\&\cdot \left[1 + \left(\frac{2}{D_{21}D_{23}D_{31}D_{32}D_{41}D_{42}}\right) + \left(\frac{2}{D_{12}D_{14}D_{31}D_{34}D_{41}D_{43}}\right) \right.\\&\quad\left.+ \left(\frac{2}{D_{13}D_{14}D_{23}D_{24}D_{42}D_{43}}\right) + \left(\frac{2}{D_{12}D_{13}D_{21}D_{24}D_{32}D_{34}}\right)\right].
\end{align*}\noindent
The only changes required to obtain this bound are that in \cite{heathbrowntwistsone} Heath-Brown chooses a (unique) representative of a point $P\in E(\Q)/\mathrm{tors}$ with $|x|_2 = 1$ and $x>0$ --- instead one has to change the $2$-adic condition to $|x|_2 = |D|_2$. Also, instead of worrying about the condition for local solubility of the equations resulting from the $2$-descent at $p=2$ (which Heath-Brown handles by a trick reducing to Hilbert's reciprocity law), we may simply drop the condition since we are only concerned with an upper bound on $2^{s(D)}$. The rest of the argument proceeds in exactly the same way, except we trivially bound the sum remaining in Section 5 (``the leading terms'') of \cite{heathbrowntwiststwo}. This completes the proof.
\end{proof}

\appendix
\section{Optimizing the bound for Theorem \ref{constant theorem}}\label{appendix}
Let us now describe the optimization procedure for Theorem \ref{constant theorem}. Recall the explicit bound that we had proved (we have shifted $J$ by $O(\delta)$ for computational purposes below):
\begin{prop}
Let $c < 1$, $D > 1$, $\tilde{D} := \frac{D + \sqrt{D^2 + 4}}{2}$, $C:=5\tilde{D}$, and $s\in \Z^+$ be such that $$\frac{576}{C} + \frac{72}{D^2} + \max\left(\frac{19}{C}, \frac{19}{D^2}\right) < \frac{1}{2}$$ and $$\left(\frac{\sqrt{2}c}{3} - \frac{1}{(\kappa - 1)^s}\right)\kappa - \frac{1 + \frac{1}{(\kappa - 1)^s}}{(D-1)^2}\left(9 + \frac{\kappa + 1}{(c^{-2} - 1)}\right) > 2,$$ where $$\kappa := \left(\frac{9}{2} - \max\left(\frac{171}{C}, \frac{171}{D^2}\right) - \frac{504}{C} - \frac{63}{D^2}\right)\left(1 + D^{-1}\right)^{-2}.$$

Let $\delta\ll_{c,D} 1$. Let $T\gg_{c,D,\delta} 1$. Let $1 < J < 2$. Let $E\in \mathcal{F}_*$. Then: 
\begin{enumerate}
\item If $\rank(E) = 0$ then $\#|E(\Z)| = 0$. 
\item If $\rank(E) = 1$ then $\#|E(\Z)|\leq 2$.
\item If $\rank(E) = r > 1$, then:
\begin{align*}
\#|E(\Z)|&\leq 2r \left\lceil\frac{\log{\tilde{D}}}{\log{J}} + O(\delta)\right\rceil\cdot \max_{S\subseteq \RP^{r-1} : \forall v\neq w\in S, |\langle v, w\rangle|\leq \frac{J}{2}} \#|S|\\&\quad + 9s(3^r - 1).
\end{align*}
\end{enumerate}
\end{prop}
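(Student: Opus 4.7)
The proposition is essentially a bookkeeping consolidation of the lemmas proved earlier in the section. The plan is to invoke each of them in turn against the decomposition of $E(\Z)$ built up earlier.

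First I would dispatch the low-rank cases. Since $E\in\mathcal{F}_*$ has trivial rational torsion, rank $0$ forces $E(\Q)=0$ and hence $E(\Z)=\emptyset$. For rank $1$, the bound $\#|E(\Z)|\leq 2$ is exactly the first assertion of Lemma \ref{multiples lemma}. Both cases are immediate and use no hypotheses on $c,D,s$.

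Next, for $r=\rank(E)>1$, I would recall the decomposition
\[
E(\Z)\;=\;\I\;\cup\;\bigcup_{i=1}^{r}\II_{D}^{(i)}\;\cup\;\bigcup_{i=1}^{r}\bigcup_{\vec{a}\in\{-1,0,1\}^{i},\,a_{i}>0}\III_{D}^{(i,\vec{a})}
\]
constructed above (it is complete by the minimality argument noted when the $H_{i}$ were defined), and for each $\vec{a}$ further write
\[
\III_{D}^{(i,\vec{a})}\;=\;\IV_{D}^{(i,\vec{a})}\;\cup\;\bigcup_{3\tilde{R}=-R_{\vec{a}}}\III_{D}^{(i,\vec{a},\tilde{R})}.
\]
The piece $\I$ is empty by the second assertion of Lemma \ref{multiples lemma} (no integral point on a curve in $\mathcal{F}_{*}$ is a nontrivial multiple of a rational point once $r\geq 2$). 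The pieces $\IV_{D}^{(i,\vec{a})}$ are empty by the lemma showing $\IV=\emptyset$, whose hypothesis is precisely the first displayed inequality on $C,D$ in the statement.

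Then I would assemble the remaining two contributions. Summing the $\II$-bound over $i=1,\dots,r$ gives the first term $2r\lceil \tfrac{\log\tilde{D}}{\log J}+O(\delta)\rceil\cdot\max_{S}\#|S|$ directly from the $\II$-lemma (after absorbing the $O(\delta)$ perturbation of $J$). For the large-height contribution, the $\III_{D}^{(i,\vec{a},\tilde{R})}$-lemma gives $\#|\III_{D}^{(i,\vec{a},\tilde{R})}|\leq 2s$ under the second displayed inequality (the explicit bivariate Roth calculation). For fixed $i$ the number of admissible $\vec{a}\in\{-1,0,1\}^{i}$ with $a_{i}>0$ is $3^{i-1}$, and for each such $\vec{a}$ there are exactly $9$ preimages $\tilde{R}$ of $-\tfrac{1}{3}R_{\vec{a}}$ on $E(\overline{\Q})$. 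Summing,
\[
\sum_{i=1}^{r}3^{i-1}\cdot 9\cdot 2s\;=\;18s\cdot\frac{3^{r}-1}{2}\;=\;9s(3^{r}-1),
\]
which is the second term.

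The only real obstacle is making sure the hypotheses of each invoked lemma are in force: one must check that $\delta$ has been chosen $\ll_{c,D}1$ so the $O(\delta)$ error terms in the $\IV$-emptiness condition and in the $\III$-cardinality condition remain strictly below the thresholds demanded by the two displayed inequalities, and that $T$ is large enough that the reductions to $\mathcal{F}_{*}$ (Lemmas \ref{getting rid of annoying reduction} and \ref{throwing out small points}) and the comparisons of Weil and canonical heights (Lemma \ref{weil vs canonical}) are valid. Once this is granted the proof is pure assembly.
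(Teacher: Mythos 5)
Your proposal is correct and is essentially the paper's own argument: the Proposition is stated there as a pure summary (``Summarizing, we have proved:'') of the decomposition $E(\Z)=\I\cup\bigcup_i \II_D^{(i)}\cup\bigcup_{i,\vec{a}}\III_D^{(i,\vec{a})}$ together with Lemma \ref{multiples lemma}, the $\II$-repulsion lemma, the $\IV=\emptyset$ lemma, and the $\#|\III_D^{(i,\vec{a},\tilde{R})}|\leq 2s$ lemma, exactly as you assemble them. Your count $\sum_{i=1}^{r}3^{i-1}\cdot 9\cdot 2s=9s(3^{r}-1)$ and the absorption of the $O(\delta)$ shifts into the choice of $\delta\ll_{c,D}1$ match the intended bookkeeping.
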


The first question is how to get an explicit bound on $\max_{S\subseteq \RP^{r-1} : \forall v\neq w\in S, |\langle v, w\rangle|\leq \frac{J}{2}} \#|S|$ for $r$ very large. (Kabatiansky-Levenshtein gives an asymptotic, but this is not enough.) Since we can take $r$ extremely large (e.g., $r\geq 13$) and Bhargava-Shankar guarantee that the proportion of curves with rank at least $r$ is $\ll 5^{-r}$, the following simpleminded estimate will suffice.
\begin{lem}
Let $\theta_0 > 0$, let $r\geq 3$, and let $S\subseteq S^{r-1}$ be such that for every $v\neq w\in S, \theta_{v,w}\geq \theta_0$. Then $$\#|S|\leq 2\sqrt{3r}\sin{\left(\frac{\theta_0}{2}\right)}^{1-r}\cos{\left(\frac{\theta_0}{2}\right)}^{-1}.$$
\end{lem}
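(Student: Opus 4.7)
The plan is a standard sphere-packing volume bound on $S^{r-1}$. For each $v\in S$, let $B(v,\theta_0/2):=\{w\in S^{r-1}:\angle(v,w)<\theta_0/2\}$ denote the open spherical cap of angular radius $\theta_0/2$ about $v$. By the triangle inequality for the angular (geodesic) metric on $S^{r-1}$ and the hypothesis $\angle(v,w)\geq\theta_0$ for distinct $v,w\in S$, these caps are pairwise disjoint, and so $\#|S|\cdot \mathrm{Area}(B(\theta_0/2))\leq \mathrm{Area}(S^{r-1})$. It therefore suffices to lower-bound the area of a single cap.

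Next I would parametrize $S^{r-1}$ in polar coordinates about the pole of the cap to write $\mathrm{Area}(B(\alpha))=\mathrm{Area}(S^{r-2})\int_0^\alpha \sin^{r-2}(\phi)\,d\phi$, and use the standard formula $\mathrm{Area}(S^{n-1})=2\pi^{n/2}/\Gamma(n/2)$. We may assume $\theta_0\leq\pi$ (else $\#|S|\leq 1$), so $\theta_0/2\leq\pi/2$ and $\cos\phi\in[0,1]$ throughout $[0,\theta_0/2]$. The inequality $\sin^{r-2}(\phi)\geq \sin^{r-2}(\phi)\cos(\phi)=\frac{d}{d\phi}\bigl(\sin^{r-1}(\phi)/(r-1)\bigr)$ then integrates to
$$\int_0^{\theta_0/2}\sin^{r-2}(\phi)\,d\phi \geq \frac{\sin^{r-1}(\theta_0/2)}{r-1}.$$
Combining these ingredients yields
$$\#|S| \leq \frac{\sqrt{\pi}\,\Gamma((r-1)/2)}{\Gamma(r/2)}\cdot \frac{r-1}{\sin^{r-1}(\theta_0/2)}.$$

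Finally I would invoke Wendel's inequality, which at $x=(r-1)/2$, $s=1/2$ gives $\Gamma(r/2)\geq \bigl((r-1)/2\bigr)^{1/2}\bigl((r-1)/r\bigr)^{1/2}\Gamma((r-1)/2)$ and hence $\Gamma((r-1)/2)/\Gamma(r/2)\leq \sqrt{2r}/(r-1)$. Plugging this in collapses the bound to $\#|S|\leq \sqrt{2\pi r}\cdot \sin^{1-r}(\theta_0/2)$. Since $2\pi<12$ one has $\sqrt{2\pi r}<2\sqrt{3r}$, and since $\cos(\theta_0/2)\leq 1$ one has $1\leq \cos(\theta_0/2)^{-1}$, so this is majorized by the claimed bound (which is in fact slightly wasteful). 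No single step is a real obstacle; the only calculation requiring any care is the $\Gamma$-ratio estimate, which can also be verified by induction on $r$ via $\Gamma(x+1)=x\Gamma(x)$ or extracted from Stirling.
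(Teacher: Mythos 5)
Your proof is correct and follows essentially the same route as the paper: a packing argument with disjoint spherical caps of radius $\theta_0/2$, a lower bound on the cap volume of the form $\sin^{r-1}(\theta_0/2)/(r-1)$ (the paper extracts this from the leading term of the regularized incomplete beta function, you from a direct integral estimate), and then a bound on the ratio $\Gamma\left(\frac{r-1}{2}\right)/\Gamma\left(\frac{r}{2}\right)$ (via Wendel's inequality rather than the paper's induction). The only cosmetic difference is that your cap bound omits the $\cos(\theta_0/2)$ factor, so you actually obtain the slightly stronger bound $\sqrt{2\pi r}\,\sin^{1-r}(\theta_0/2)$, which majorizes into the stated one.
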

\begin{proof}
Note that balls of radius $\frac{\theta_0}{2}$ (in the spherical distance) about the points of $S$ do not intersect. Thus $$\vol(S^{r-1})\geq \#|S|\cdot \vol\left(B_{\frac{\theta_0}{2}}((1,0,\ldots))\right).$$ But the ball of radius $\frac{\theta_0}{2}$ about $(1,0,\ldots)$ is the spherical cap $x_1\geq \cos{\left(\frac{\theta_0}{2}\right)}$. The surface area of such a cap is $$\frac{1}{2}\vol(S^{r-1})I_{\sin^2{\left(\frac{\theta_0}{2}\right)}}\left(\frac{r-1}{2},\frac{1}{2}\right),$$ where $I_x(a,b)$ is the regularized incomplete beta function.

But $$I_x(a,b) = \frac{x^a(1-x)^b}{a B(a,b)}\left(1 + \sum_{n\geq 0} \frac{B(a+1,n+1)}{B(a+b,b+1)} x^{n+1}\right)$$ where $B(w,z)$ is the usual beta function. Thus in particular $I_x(a,b)\geq \frac{x^a(1-x)^b}{a B(a,b)}$, so that we have found:
\begin{align*}
\#|S|&\leq \frac{(r-1)B\left(\frac{r-1}{2}, \frac{1}{2}\right)}{\sin^{r-1}{\left(\frac{\theta_0}{2}\right)}\cos{\left(\frac{\theta_0}{2}\right)}}
\\&\leq r\sqrt{\pi}\cdot \frac{\Gamma\left(\frac{r-1}{2}\right)}{\Gamma\left(\frac{r}{2}\right)}\cdot \sin^{1-r}{\left(\frac{\theta_0}{2}\right)}\cos{\left(\frac{\theta_0}{2}\right)}^{-1}.
\end{align*}
Therefore it suffices to show that $$\frac{\Gamma\left(\frac{r-1}{2}\right)}{\Gamma\left(\frac{r}{2}\right)}\leq \frac{2\sqrt{3}}{\sqrt{\pi r}}.$$ But this follows via induction (with equality at $r=3$).
\end{proof}

Note that this implies that
\begin{align*}
\max_{S\subseteq \RP^{r-1} : \forall v\neq w\in S, |\langle v, w\rangle|\leq \frac{J}{2}} \#|S|&\leq \sqrt{3r}\sin{\left(\frac{\theta}{2}\right)}^{1-r}\cos{\left(\frac{\theta}{2}\right)}^{-1}
\\&= \sqrt{3r}\left(\frac{1}{2} - \frac{J}{4}\right)^{\frac{1-r}{2}}\left(\frac{1}{2} + \frac{J}{4}\right)^{-\frac{1}{2}},
\end{align*}\noindent
where as usual we have written $J = 2\cos{\theta}$.

Now notice that we have left the $r=2$ case on its own. This is because in this case the unit sphere is simply the circle and we can give a very good estimate for the maximum (the idea is the same):
\begin{lem}
Let $S\subseteq \RP^1$ be such that for every $v\neq w\in S$, $\theta_{v,w}\geq \theta_0$. Then $$\#|S|\leq \floor{\frac{\pi}{\theta_0}}.$$
\end{lem}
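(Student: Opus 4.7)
The plan is to reduce the problem from $\mathbb{RP}^1$ to the circle $S^1$ by passing to a double cover, so that the $\mathbb{RP}^1$-angle condition translates into a pairwise arc-distance condition on $2\#|S|$ points of $S^1$, at which point the bound falls out of a trivial pigeonhole.

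Concretely, I would first fix a unit-vector lift of each line in $S$, so that $S$ determines a set $\tilde{S}\subseteq S^1$ of cardinality $2\#|S|$ which is stable under the antipodal map $v\mapsto -v$. The key observation is that for any two distinct lines $\ell, \ell'\in S$, the $\mathbb{RP}^1$-angle $\theta_{\ell,\ell'}$ equals the minimum of the two spherical distances $d_{S^1}(\tilde{v},\tilde{w})$ and $d_{S^1}(\tilde{v},-\tilde{w})$, where $\pm\tilde{v},\pm\tilde{w}\in S^1$ are the chosen lifts. Hence the hypothesis $\theta_{\ell,\ell'}\geq \theta_0$ forces both of these distances to be at least $\theta_0$; since $\ell\neq \ell'$ the four points $\pm\tilde{v},\pm\tilde{w}$ are all distinct, and one concludes that every pair of distinct points in $\tilde{S}$ is at spherical distance at least $\theta_0$ on $S^1$.

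Now cyclically order the $2\#|S|$ points of $\tilde{S}\subseteq S^1$; the consecutive arc-gaps are each at least $\theta_0$ and sum to $2\pi$, so $2\#|S|\cdot \theta_0\leq 2\pi$, i.e.\ $\#|S|\leq \pi/\theta_0$. Since $\#|S|$ is an integer, $\#|S|\leq \lfloor \pi/\theta_0\rfloor$.

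The only step requiring genuine care is the identification of the $\mathbb{RP}^1$-angle with $\min(d_{S^1}(\tilde v,\tilde w),d_{S^1}(\tilde v,-\tilde w))$, which one can check either by writing $\tilde v = (\cos\alpha,\sin\alpha)$, $\tilde w=(\cos\beta,\sin\beta)$ and noting $|\langle \tilde v,\tilde w\rangle|=|\cos(\alpha-\beta)|$, or just by unpacking the definition of the angle between two lines through the origin in $\mathbb{R}^2$. Everything else is elementary, and I expect no real obstacle.
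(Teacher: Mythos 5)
Your proof is correct and is essentially the paper's argument: both are one-dimensional packing/pigeonhole counts on a circle, the only cosmetic difference being that you lift to the double cover $S^1$ and sum the $2\#|S|$ consecutive gaps (each $\geq \theta_0$) against total length $2\pi$, whereas the paper works directly with representatives in $[0,\pi)$ and packs disjoint arcs of total length $\theta_0$ per point into measure $\pi$. The only detail your write-up skips is the pair of antipodal lifts $\pm\tilde{v}$ of a single line, whose distance is $\pi\geq\theta_0$ in the relevant range of $\theta_0$, so nothing is lost.
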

\begin{proof}
Let $T := \{\phi\in [0,\pi) : e^{i\phi}\in \pi^{-1}(S)\}$, where $\pi: S^1\to \RP^1$ is the projection. (Note that $\#|T| = \#|S|$.) Without loss of generality $0\in T$. Then the union $$\bigcup_{1\neq t\in T} \left(t - \frac{\theta_0}{2}, t + \frac{\theta_0}{2}\right)\cup \left(\pi - \frac{\theta_0}{2}, \pi\right)\cup \left(0, \frac{\theta_0}{2}\right)$$ is disjoint. On taking measures we find the desired inequality.
\end{proof}

Now for $3\leq r\leq 13$ we use a program written by Henry Cohn to find optimal linear programming bounds on these maxima. This allows us to compile a table of bounds for given $\theta$ ranging from slightly larger than $0$ to slightly smaller than $\frac{\pi}{3}$. Then for each fixed $r$ we choose $c,D,s,J$ making the upper bound on $\#|E(\Z)|$ as small as possible. This choice of $J$ corresponds to a $\theta$ via $J = 2\cos{\theta}$, and one needs only check the sphere packing upper bound we use with rigorous arithmetic for this $J$.\footnote{We end up simply choosing $c = 0.998114, D = 612.117, s = 3$ and instead only optimizing $J$ for each $3\leq r\leq 13$.}

In any case, what is left is simply a Mathematica calculation, and the relevant Mathematica document used to optimize the bound has been included. As a final note, observe that if $(A,B)\equiv (2,2)\pmod{3}$, then $E_{A,B}(\Z) = \emptyset$. Thus we may restrict to the subfamily $\mathcal{G}$ of $(A,B)$ not congruent to $(2,2)$ modulo $3$. Inside this subfamily, we use the methods of Bhargava-Shankar (and Bhargava-Skinner-Zhang) to compute lower bounds for the proportions of curves with rank $0$, $1$, and either $0$ or $1$. For reference, denote by $\tilde{F}_1,\ldots,\tilde{F}_4,\tilde{F}^+,\tilde{F}^-$, and $\tilde{F}$ the subfamilies of curves with $(A,B)\not\equiv (2,2)\pmod{3}$ corresponding to the large families $F_1,\ldots,F_4,F^+,F^-$, and $F$ constructed in \cite{bhargavashankarfive}. Then $\tilde{F}_2,\tilde{F}_3,\tilde{F}_4$ have unchanged densities, and $\tilde{F}_1$ has density $\frac{9}{8}\mu(F_1) - \frac{1}{8}\geq 66.45\%$ (we are lucky because the local root number at $3$ does not vary when $v_3(A) = v_3(B) = 0$). Here we have written $\mu$ to mean the density of a subfamily (where the ambient family is understood). This results in lower bounds of $\mu(\tilde{F}^+)\geq 41.15\%$ and $\mu(\tilde{F}^-)\geq 65.56\%$. Therefore the union of these families has density $\mu(\tilde{F})\geq 60.67\%$. Following Bhargava-Skinner-Zhang, this results in a proportion of at least $22.821\%$ of curves in $\mathcal{G}$ having rank $1$. Following Bhargava-Shankar, this also results in a proportion of at least $22.75\%$ of curves having rank $0$, and at least $84.22\%$ having rank either $0$ or $1$. Since $\mathcal{G}$ has density $\frac{8}{9}$ in $\mathcal{F}_{\mathrm{universal}}$, we in effect gain a factor of $\frac{8}{9}$ (as well as slightly more from the improved lower bounds on rank $\leq 1$ curves) due to these considerations. The remaining optimization is in the Mathematica file.

\bibliography{theaverageellipticcurvehasfewintegralpoints}{}
\bibliographystyle{plain}

\ \\

\end{document}